\tikzset{cross/.style={cross out, draw=black, minimum size=2*(#1-\pgflinewidth), inner sep=0pt, outer sep=0pt},
cross/.default={1pt}}
\tikzset{->-/.style={decoration={
  markings,
  mark=at position #1 with {\arrow[scale=1.5]{>}}},postaction={decorate}}}
\tikzset{-<-/.style={decoration={
  markings,
  mark=at position #1 with {\arrow[scale=1.5]{<}}},postaction={decorate}}}
\tikzset{midstealth/.style={decoration={
  markings,
  mark=at position #1 with {\arrow{stealth}}},postaction={decorate}}}
\newtheoremstyle{break}
  {}%
  {}%
  {\itshape}
  {}%
  {\bfseries}
  {.}%
  {\newline}%
  {}%
\theoremstyle{plain}
\newtheorem{thm}{Theorem}[section]
\newtheorem{lem}[thm]{Lemma}
\newtheorem{prop}[thm]{Proposition}
\newtheorem{cor}[thm]{Corollary}
\newtheorem{exa}[thm]{Example}
\newtheorem{rem}[thm]{Remark}
\newcommand{\conj}[1]{\overline{#1}}
\newcommand{\abs}[1]{\lvert#1\rvert}
\DeclareMathOperator{\Ker}{Ker}
\DeclareMathOperator{\Hom}{Hom}
\DeclareMathOperator{\Homo}{H}
\DeclareMathOperator{\sol}{Sol}
\DeclareMathOperator{\diag}{diag}
\DeclareMathOperator{\cone}{cone}
\newcommand{\barsigma}{\overline{\sigma}}
\newcommand{\R}{\mathbb{R}}
\newcommand{\C}{\mathbb{C}}
\newcommand{\DD}{\mathcal{D}}
\newcommand{\A}{\mathbb{A}}
\newcommand{\Q}{\mathbb{Q}}
\newcommand{\Z}{\mathbb{Z}}
\newcommand{\lef }{\left\{ }
\newcommand{\righ }{\right\} }
\newcommand{\Gm}{\mathbb{G}_m}
\newcommand{\ii}{\sqrt{-1}}
\newcommand{\s}{\sigma}
\newcommand{\bs}{\barsigma}
\newcommand{\oz}{\mathring{\zeta}}
\newcommand*{\transp}[2][-3mu]{\ensuremath{\mskip1mu\prescript{\smash{\mathrm t\mkern#1}}{}{\mathstrut#2}}}
\newcommand{\tpi}{2\pi \sqrt{-1}}
\newcommand{\ot}{\otimes}
\DeclareMathOperator{\lcm}{lcm}
\newcommand{\tC}{\tilde{C}}
\newcommand{\bC}{\bar{C}}
\newcommand{\X}{X}
\newcommand{\tX}{\tilde{X}}
\newcommand{\bX}{\bar{X}}
\newcommand{\uu}{u}
\newcommand{\tu}{\tilde{u}}
\newcommand{\bu}{\bar{u}}
\newcommand{\bM}{\bar{M}}
\newcommand{\Ihgoto}{I_{h,1}}
\newcommand{\bIhgoto}{\bar{I}_{h,1}}
\crefname{thm}{Theorem}{Theorems}
\crefname{bthm}{Theorem}{Theorems}
\crefname{bprop}{Proposition}{Propositions}
\crefname{blem}{Lemma}{Lemmata}
\crefname{prop}{Proposition}{Propositions}
\crefname{lem}{Lemma}{Lemmata}
\crefname{bcor}{Corollary}{Corollary}
\crefname{cor}{Corollary}{Corollary}
\crefname{defn}{Definition}{Definitions}
\crefname{rem}{Remark}{Remarks}
\crefname{exa}{Example}{Exmples}
\title{Homology and cohomology intersection numbers of GKZ systems}
\author{
Yoshiaki Goto\footnote{ General Education, Otaru University of Commerce, 3-5-21 Midori, Otaru 047-8501, Japan.\newline\indent e-mail: \texttt{goto@res.otaru-uc.ac.jp}}
\and
Saiei-Jaeyeong Matsubara-Heo\footnote{ Graduate School of Science, Kobe  University, 1-1 Rokkodai, Nada-ku, Kobe 657-8501, Japan.\newline\indent e-mail: \texttt{saiei@math.kobe-u.ac.jp}}}
\begin{document}

\date{}
\maketitle

\begin{abstract}
\noindent
We describe the homology intersection form associated to regular holonomic GKZ systems in terms of the combinatorics of regular triangulations. Combining this result with the twisted period relation, we obtain a formula of cohomology intersection numbers in terms of a Laurent series. We show that the cohomology intersection number depends rationally on the parameters. We also prove a conjecture of F. Beukers and C.Verschoor on the signature of the monodromy invariant hermitian form. This is a continuation of the previous work \cite{MatsubaraEuler}.
\end{abstract}
\section{Introduction}
The study of hypergeometric integrals has a long history. The starting point of modern approach to hypergeometric integrals is the work of K.Aomoto (\cite{Aomoto1}, \cite{Aomoto2}, \cite{AomotoLesEquation}). He systematically employed the viewpoint of twisted (co)homology group which is an alias for (co)homology group with local system coefficients. We consider an integral of the following form:
\begin{equation}\label{EulerInt}
f_{\Gamma}(z)=\int_\Gamma h_{1}(x)^{-\gamma_1}\cdots h_{k}(x)^{-\gamma_k}x^{c}\omega,
\end{equation}
where $h_l(x;z)=h_{l,z^{(l)}}(x)=\sum_{j=1}^{N_l}z_j^{(l)}x^{{\bf a}^{(l)}(j)}$ ($l=1,\dots,k$) are Laurent polynomials in torus variables $x=(x_1,\dots,x_n)$, $\gamma_l\in\C$ and $c={}^t(c_1,\dots,c_n)\in\C^{n\times 1}$ are parameters, $x^c=x_1^{c_1}\dots x_n^{c_n}$, $\Gamma$ is a suitable integration cycle, and $\omega$ is an algebraic $n$-form which has poles along $D=\{ x\in\C^n\mid x_1\dots x_n h_1(x)\dots h_k(x)=0\}$. As a function of the independent variable $z=(z_j^{(l)})_{j,l}$, the integral (\ref{EulerInt}) defines a hypergeometric function. We call the integral (\ref{EulerInt}) Euler integral. We can naturally define the twisted de Rham cohomology group associated to the Euler integral (\ref{EulerInt}). Let $\Gm^n={\rm Specm}\;\C[x_1^\pm,\dots,x_n^\pm]$ be an algebraic torus. We put $V_z=\Gm^n\setminus D$. We can define a suitable integrable connection $\nabla_x:\mathcal{O}_{V_z}\rightarrow\Omega^1_{V_z}$ and a local system $\mathcal{L}_z$ so that the Euler integral (\ref{EulerInt}) can be regarded as a result of a twisted period pairing $\Homo_n\left( V_z^{an};\mathcal{L}_z\right)\times\Homo_{\rm dR}^n\left( V_z;(\mathcal{O}_{V_z},\nabla_x)\right)\ni ([\Gamma],[\omega])\mapsto \int_{\Gamma}h_{1}(x)^{-\gamma_1}\cdots h_{k}(x)^{-\gamma_k}x^{c}\omega\in\C$. Here, the algebraic de Rham cohomology group $\Homo_{\rm dR}^*\left( V_z;(\mathcal{O}_{V_z},\nabla_x)\right)$ is defined as the hypercohomology group $\mathbb{H}^*\left( V_z;(\cdots\overset{\nabla_x}{\rightarrow}\Omega_{V_z}^\bullet\overset{\nabla_x}{\rightarrow}\cdots)\right)$ where $\Omega_{V_z}^p$ is the sheaf of algebraic $p$-forms on $V_z$. We call the homology group $\Homo_n\left( V_z^{an};\mathcal{L}_z\right)$ the twisted homology group. Under a genericity assumption on the parameters $\gamma_l$ and $c$, we have the vanishing result $\Homo_{\rm dR}^m\left( V_z;(\mathcal{O}_{V_z},\nabla_x)\right)=0$ $(m\neq n)$.
We can also define perfect pairings $\langle\bullet,\bullet\rangle_{ch}:\Homo_{\rm dR}^n\left( V_z;(\mathcal{O}_{V_z},\nabla_x)\right)\times \Homo_{\rm dR}^{n}\left( V_z;(\mathcal{O}_{V_z}^\vee,\nabla_x^\vee)\right)\rightarrow\C$ and $\langle\bullet,\bullet\rangle_{h}:\Homo_n\left( V_z^{an};\mathcal{L}_z\right)\times \Homo_{n}\left( V_z^{an};\mathcal{L}_z^\vee\right)\rightarrow\C$ which are called the cohomology and homology intersection form respectively. Here, the superscript $\vee$ stands for the dual. Inspired by the works of Aomoto, many people developed the intersection theory of twisted (co)homology groups. They discovered that the explicit formulas of (co)homology intersection numbers play important roles in the theory of hypergeometric integrals: twisted homology intersection numbers can be used to determine the image of the period map of a family of algebraic varieties (\cite{DeligneMostow}, \cite{MSY},\cite{YoshidaLove}); cohomology intersection numbers appear as important candidates of a family of functional relations called quadratic relations (\cite{ChoMatsumoto}, \cite{MatsumotoIntersection}). When the Laurent polynomials $h_l$ are all linear polynomials, various techniques of computing (co)homology intersection numbers have been developed and applied to hypergeometric integrals of particular types. They are, generalized hypergeometric functions ${}_pF_{p-1}$, Appell-Lauricella's $F_A,F_B,F_D$ functions, Aomoto-Gelfand functions, and Selberg integrals (\cite{ChoMatsumoto}, \cite{Goto}, \cite{KitaYoshida1}, \cite{KitaYoshida2}, \cite{MatsumotoIntersection}, \cite{MatsumotoYoshida}, \cite{Mim}, \cite{MimachiYoshida}). For Lauricella's $F_C$ functions,  (co)homology intersection numbers are computed in \cite{Goto2}, \cite{Goto3}.

In \cite{MatsubaraEuler}, the second author developed a systematic method of constructing a basis of the twisted homology group associated to a class of integrals which includes Euler integral (\ref{EulerInt}). The aim of this paper is to compute the homology intersection numbers of these cycles for Euler integral (\ref{EulerInt}). In order to explain the result of \cite{MatsubaraEuler} in our setting, let us recall the definition of GKZ system. GKZ system is a special holonomic system introduced by I.M.Gelfand, M.I.Graev, M.M.Kapranov, and A.V.Zelevinsky (\cite{GelfandGraevZelevinsky}, \cite{GKZToral}). The system is determined by two inputs: a $d\times n$ ($d<n$) integer matrix $A=({\bf a}(1)|\cdots|{\bf a}(n))$ and a parameter vector $\delta\in\C^{d}$. GKZ system $M_A(\delta)$ is defined by
\begin{subnumcases}{M_A(\delta):}
E_i\cdot f(z)=0 &($i=1,\dots, d$)\label{EulerEq}\\
\Box_u\cdot f(z)\hspace{-0.8mm}=0& $\left( 0\neq u={}^t(u_1,\dots,u_{n})\in L_A=\Ker(A\times:\Z^{n\times 1}\rightarrow\Z^{d\times 1})\right)$,\label{ultrahyperbolic}
\end{subnumcases}
where $E_i$ and $\Box_u$ are differential operators defined by 

\begin{equation}\label{HGOperators}
E_i=\sum_{j=1}^{n}a_{ij}z_j\frac{\partial}{\partial z_j}+\delta_i,\;\;\;
\Box_u=\prod_{u_j>0}\left(\frac{\partial}{\partial z_j}\right)^{u_j}-\prod_{u_j<0}\left(\frac{\partial}{\partial z_j}\right)^{-u_j}.
\end{equation}

\noindent
Throughout this paper, we assume an additional condition that the column vectors of $A$ generate the lattice $\Z^{d\times 1}$. Let us write $\DD_{\A^n}$ for the Weyl algebra on $\A^n$ and write $H_A(\delta)$ for the left ideal of $\DD_{\A^n}$ generated by the operators (\ref{HGOperators}). We also call the left $\DD_{\A^n}$-module $M_A(\delta)=\DD_{\A^n}/H_A(\delta)$ GKZ system. The fundamental property of GKZ system $M_A(\delta)$ is that it is always holonomic (\cite{Adolphson}), which implies that the stalk of the sheaf of holomorphic solutions at a generic point is finite dimensional. It is proved in \cite{GKZEuler} that (\ref{EulerInt}) is a solution of $M_A(\delta)$ with particular choices of the $(n+k)\times N$ matrix $A$ and parameters $\delta$ where $N=N_1+\dots+N_k$. In view of the important result of R.Hotta (\cite{Hotta}), GKZ system is regular holonomic if and only if all the solutions of it admit Euler integral representation when the parameter $\delta$ satisfies a certain genericity condition. See also \cite[Theorem 5.9]{FernandezFernandez} and \cite[COROLLARY 3.16]{SchulzeWalther}. Therefore, we expect that the theory of GKZ system tells us some information of the integral (\ref{EulerInt}).

An important combinatorial structure of GKZ system is the secondary fan ${\rm Fan}(A)$ (\cite{DeLoeraRambauSantos}, \cite[Chapter 7]{GKZbook}). Each open cone of ${\rm Fan}(A)$ corresponds to a combinatorial operation $T$ called a regular triangulation. It was discussed in \cite{FernandezFernandez},  \cite{GKZToral} and \cite{SST} that each regular triangulation $T$ gives rise to a particular basis $\Phi_T$ of solutions of $M_A(\delta)$. Each element of $\Phi_T$ can be expressed in terms of hypergeometric series in several variables. Let $\sol_{M_A(\delta),z}$ be the vector space of local solutions of $M_A(\delta)$ at the point $z$. Under a genericity assumption on the parameters $\delta$, one has a canonical isomorphism $\int:\Homo_n\left( V_z^{an};\mathcal{L}_z\right)\tilde{\rightarrow}\sol_{M_A(\delta),z}$ at a generic point $z$ (\cite[Theorem 2.10]{GKZEuler}, see also \cite[Theorem 3.14]{MatsubaraEuler}). In \cite[\S 6]{MatsubaraEuler}, the second author constructed a basis $\Gamma_T$ of the twisted homology group $\Homo_n\left( V_z^{an};\mathcal{L}_z\right)$ so that the transition matrix between two bases $\int(\Gamma_T)$ and $\Phi_T$ has an explicit formula. Replacing the parameter $\delta$ by $-\delta$, we also obtain a basis $\check{\Gamma}_T$ of $\Homo_n\left( V_z^{an};\mathcal{L}_z^\vee\right)$. When the regular triangulation $T$ is unimodular, one can obtain a closed formula of any homology intersection number $\langle\gamma,\gamma^\vee\rangle_h$ with $\gamma\in\Gamma_T$ and $\gamma^\vee\in\check{\Gamma}_T$ (\cite[Theorem 7.5]{MatsubaraEuler}). As an immediate consequence of the twisted analogue of Riemann-Hodge bilinear relation, one has an expansion formula of a cohomology intersection number in terms of Laurent series in $z$ (\cite[Theorem 8.1]{MatsubaraEuler}). This formula was later applied to constructing an algorithm of computing a closed form of cohomology intersection numbers in \cite{MatsubaraTakayama}. 

The purpose of this paper is to remove the assumption that the regular triangulation $T$ is unimodular in the theorems mentioned above. The main result of the paper is the following 

\newpage

\begin{thm}\label{thm:Intro}
Take any regular triangulation $T$, ${\bf a}, {\bf a}^\prime\in\Z^{n\times 1}$ and ${\bf b}, {\bf b}^\prime\in\Z^{k\times 1}$. Suppose that $\gamma_l\notin\Z$ and $\delta$ is very generic with respect to $T$. Then, the following identity holds:
\begin{align}
&\frac{\langle x^{\bf a}h^{\bf b}\frac{dx}{x},x^{\bf a^\prime}h^{\bf b^\prime}\frac{dx}{x} \rangle_{ch}}{(2\pi\ii)^n}\nonumber\\
=&(-1)^{|{\bf b}|+|{\bf b^\prime}|}(\gamma-{\bf b})_{\bf b}(-\gamma-{\bf b^\prime})_{\bf b^\prime}\gamma_1\cdots\gamma_k\times\nonumber\\
&\sum_{\s\in T}\sum_{[A_{\bs}{\bf k}]\in\Z^{(n+k)\times 1}/\Z A_\s}\frac{(-1)^{|{\bf k}|}}{|\det A_\s|}\frac{\pi^{n+k}}{\sin\pi A_\s^{-1}(\delta+A_{\bs}{\bf k})}
\varphi_{\s,{\bf k}_{\bf a,b}}\Big( z;\substack{\gamma-{\bf b}\\ c+{\bf a}}\Big)
\varphi_{\s,{\bf k}^\prime_{\bf a^\prime,b^\prime}}\Big( z;\substack{-\gamma-{\bf b^\prime}\\ -c+{\bf a^\prime}}\Big).
\end{align}
Here, ${\bf k}_{\bf a,b}$ (resp. ${\bf k}^\prime_{\bf a^\prime,b^\prime}$) denotes the element such that 
$[A_{\bs}{\bf k}+
\begin{pmatrix}
{\bf b}\\
-{\bf a}
\end{pmatrix}
]=
[A_{\bs}{\bf k}_{\bf a,b}]$ (resp. $[-A_{\bs}{\bf k}+
\begin{pmatrix}
{\bf b^\prime}\\
-{\bf a^\prime}
\end{pmatrix}
]=
[A_{\bs}{\bf k}^\prime_{\bf a^\prime,b^\prime}]$) holds in the group $\Z^{(n+k)\times 1}/\Z A_\s$.
\end{thm}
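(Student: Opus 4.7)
The plan is to combine the twisted period relation with an explicit computation of the homology intersection matrix in the basis $\Gamma_T$. The twisted period relation asserts that, for any cohomology classes $\omega,\omega^\prime$,
\begin{equation*}
\frac{\langle \omega,\omega^\prime\rangle_{ch}}{(2\pi\ii)^n}=\sum_{\gamma\in\Gamma_T,\;\check\gamma^\prime\in\check\Gamma_T}\Bigl(\int_\gamma\omega\Bigr)\,(H^{-1})_{\gamma,\check\gamma^\prime}\,\Bigl(\int_{\check\gamma^\prime}\omega^\prime\Bigr),
\end{equation*}
where $H=\bigl(\langle\gamma,\check\gamma^\prime\rangle_h\bigr)$ is the matrix of homology intersection numbers. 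Three pieces of data are therefore needed: a closed-form description of $H^{-1}$, a closed-form description of the period integrals of $x^{\bf a}h^{\bf b}\frac{dx}{x}$ and $x^{\bf a^\prime}h^{\bf b^\prime}\frac{dx}{x}$ against the two bases, and a bookkeeping step that collapses the resulting double sum into the single sum displayed in the theorem.

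First I would generalize Theorem 7.5 of \cite{MatsubaraEuler} to arbitrary regular triangulations. The cycles $\gamma_{\sigma,[{\bf k}]}$ and $\check\gamma_{\sigma,[{\bf k}]}$ are constructed locally near torus fixed points indexed by the maximal simplices $\sigma\in T$, and a support/localization argument shows that $\langle\gamma_{\sigma,[{\bf k}]},\check\gamma_{\sigma^\prime,[{\bf k}^\prime]}\rangle_h=0$ whenever $\sigma\neq\sigma^\prime$, so $H$ is block diagonal with one block $H_\sigma$ per simplex. In the unimodular case each block is $1\times 1$ and no inversion is required; in general $H_\sigma$ has size $|\det A_\sigma|$ and its entries are built from monodromy characters of the finite abelian group $\Z^{(n+k)\times 1}/\Z A_\sigma$. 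The cycles $\gamma_{\sigma,[{\bf k}]}$ are designed so that these characters diagonalize $H_\sigma$, and a discrete Fourier transform on $\Z^{(n+k)\times 1}/\Z A_\sigma$ then produces the diagonal entries
\begin{equation*}
(H^{-1})_{\sigma,[{\bf k}]}\;\propto\;\frac{\pi^{n+k}}{|\det A_\sigma|\prod_{i=1}^{n+k}\sin\pi \bigl(A_\sigma^{-1}(\delta+A_{\bs}{\bf k})\bigr)_i}.
\end{equation*}

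Second, I would evaluate the period integral $\int_{\gamma_{\sigma,[{\bf k}]}}x^{\bf a}h^{\bf b}x^ch^{-\gamma}\frac{dx}{x}$ by expanding the integrand in the local chart adapted to $\sigma$ and integrating term by term, exactly as in the derivation of $\Phi_T$ in \cite{MatsubaraEuler} but keeping track of the extra multiplier $x^{\bf a}h^{\bf b}$. This multiplier shifts the monodromy exponents by $\gamma\mapsto\gamma-{\bf b}$ and $c\mapsto c+{\bf a}$, and simultaneously changes the representative in $\Z^{(n+k)\times 1}/\Z A_\sigma$ by $\begin{pmatrix}{\bf b}\\ -{\bf a}\end{pmatrix}$, which by definition is $[{\bf k}_{\bf a,b}]$. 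Careful tracking of the beta-integral normalizations coming from the local residues produces the prefactor $(-1)^{|{\bf b}|}(\gamma-{\bf b})_{\bf b}\gamma_1\cdots\gamma_k$, so the integral equals this prefactor times $\varphi_{\sigma,{\bf k}_{\bf a,b}}\bigl(z;\substack{\gamma-{\bf b}\\ c+{\bf a}}\bigr)$. A parallel computation with $\delta$ replaced by $-\delta$ handles the dual period.

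Finally, substituting all of this into the twisted period relation collapses the double sum on the right to the claimed single sum: the block structure of $H$ yields the outer sum over $\sigma\in T$, diagonality of $H_\sigma^{-1}$ yields the inner sum over $[A_{\bs}{\bf k}]$ together with the factor $\pi^{n+k}/(|\det A_\sigma|\prod\sin)$, and the two Pochhammer prefactors multiply to give $(-1)^{|{\bf b}|+|{\bf b}^\prime|}(\gamma-{\bf b})_{\bf b}(-\gamma-{\bf b}^\prime)_{{\bf b}^\prime}\gamma_1\cdots\gamma_k$. The main obstacle is the first step: inverting the non-unimodular blocks $H_\sigma$. This requires building the basis $\gamma_{\sigma,[{\bf k}]}$ so that the monodromy action on it is literally by the characters of $\Z^{(n+k)\times 1}/\Z A_\sigma$, and then recognizing the inverse Fourier matrix as a product of $1/\sin$ factors via a trigonometric identity on that finite group. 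The sign $(-1)^{|{\bf k}|}$ in the final formula should drop out of this same Fourier computation. Once the trigonometric identity is in place, the remaining steps are essentially combinatorial bookkeeping with indices and Pochhammer symbols.
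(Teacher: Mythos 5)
Your proposal follows the paper's proof exactly: Theorem \ref{TheQuadraticRelation} is obtained by substituting the block-diagonal homology intersection matrix $I_{\s,h}$ (Theorem \ref{thm:SigmaIntersectionMatrix1}, diagonalized by the finite Fourier matrix $U_\s$ on $\Z^{(n+k)\times 1}/\Z A_\s$) and the period--$\Gamma$-series relation of Theorem \ref{thm:fundamentalthm} into the twisted period relation (\ref{GeneralQR}), and you correctly identify both ingredients and that the hard new content is the non-unimodular diagonalization. One imprecision worth flagging, though it does not change the route: the eigenvalues of $I_{\s,h}^{-1}$ alone are the reciprocals of $H_{A_\s,{\bf k}}(\delta)$, which is a product of factors $(1-e^{\pm 2\pi\ii(\cdot)})$ only over blocks with $|\s^{(l)}|>1$; the full product of $n+k$ sines, the factor $1/|\det A_\s|$, and the sign $(-1)^{|{\bf k}|}$ all emerge only after multiplying by the $\Gamma$-function normalizations $C_\s(\gamma-{\bf b})C_\s(-\gamma-{\bf b'})$ (which supply the missing $|\s^{(l)}|=1$ factors and the reflection-formula sines) and using the index identity $\sum_{i\in\s}p_{\s i}(\delta+A_{\bs}{\bf k})\equiv |{\bf k}|\pmod{\Z}$ when rewriting $(1-e^{-2\pi\ii z})^{-1}$ as $e^{\pi\ii z}/(2\ii\sin\pi z)$, rather than dropping out of the Fourier step itself.
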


\noindent
The notation will be explained in \S\ref{section:TPR}. We only note that the right-hand side is a convergent Laurent series in $z$ and cohomology classes of the form $\left[x^{\bf a}h^{\bf b}\frac{dx}{x}\right]$ span the twisted cohomology group $\Homo_{\rm dR}^n\left( V_z;(\mathcal{O}_{V_z},\nabla_x)\right)$. When we can evaluate the left-hand side, the theorem above gives a non-trivial functional relation of hypergeometric functions.

Theorem \ref{thm:Intro} reveals the combinatorial nature of the cohomology intersection form of GKZ system. We fix bases $\{\phi_i\}_i$ of $\Homo_{\rm dR}^n\left( V_z;(\mathcal{O}_{V_z},\nabla_x)\right)$ and $\{\psi_i\}_i$ of $\Homo_{\rm dR}^{n}\left( V_z;(\mathcal{O}_{V_z}^\vee,\nabla_x^\vee)\right)$ which depend rationally on the variable $z$. We set $I_{ch}:=(\langle\phi_i,\psi_j\rangle_{ch})_{i,j}$ and call it the cohomology intersection matrix. It was pointed out in \cite[Theorem 3.4]{MatsubaraTakayama} that the secondary fan ${\rm Fan}(A)$ is a refinement of the normal fan $N(g)$ of the common denominator $g$ of entries of $I_{ch}$. On the other hand, there is an injective correspondence from maximal cones of $N(g)$ to Laurent expansions of $I_{ch}$ (\cite[Chap. 6, Corollary 1.8]{GKZbook}). Therefore, for a maximal cone $C$ of $N(g)$, there is a regular triangulation $T$ such that the Laurent expansion of $I_{ch}$ corresponding to $C$ is given by Theorem \ref{thm:Intro}. The deduction of Theorem \ref{thm:Intro} relies on detailed computation of the homology intersection form $\langle\bullet,\bullet\rangle_h$ (Theorem \ref{thm:SigmaIntersectionMatrix1}). A byproduct of this theorem is a combinatorial formula of the signature of the  invariant hermitian form of monodromy representation of a regular holonomic GKZ system originally conjectured by F. Beukers and C. Verschoor in a different context (\cite{Verschoor}).

Let us summarize the content of this paper. We discuss our results in \S2. In \S2.1 and \S2.2, we recall some basic results of Euler integral representation and the definition of the cohomology intersection form. In \S2.3, we state our main result Theorem \ref{thm:Intro} as Theorem \ref{TheQuadraticRelation}. The proof of Theorem \ref{TheQuadraticRelation} is based on Theorem \ref{thm:SigmaIntersectionMatrix1} whose proof is given in \S3. In \S2.4, we give some immediate consequences of our main result. In \S\ref{subsec:BV}, we discuss a proof of the conjecture of F. Beukers and C. Verschoor. The last section is devoted to the step-by-step proof of Theorem \ref{thm:SigmaIntersectionMatrix1}. The proof is reduced to the study of the case when $A$ is a square matrix. In \S3.1-4, we discuss the reduced cases. Based on these preparations, we give a proof of Theorem \ref{thm:SigmaIntersectionMatrix1} in \S3.5.

\vspace{2.5mm}

\noindent
\underline{\bf Notation} For any commutative ring $R$ and for any pair of finite sets $I$ and $J$, $R^{I\times J}$ denotes the set of matrices with entries in $R$ whose rows (resp. columns) are indexed by $I$ (resp. $J$). For any univariate function $F$ and for any vector $w={}^t(w_1,\dots,w_n)\in\C^{n\times 1}$, we define $F(w)$ by $F(w)=F(w_1)\cdots F(w_n)$. For any vector ${\bf k}=(k_1,\dots,k_n)\in\Z^{n}$, we set $|{\bf k}|=k_1+\cdots+k_n$.

\section{The twisted period relation for $\Gamma$-series}\label{section:TPR}

\subsection{Euler integral representations of regular holonomic GKZ systems}
In this subsection, we recall the relation between the integral (\ref{EulerInt}) and GKZ systems. We set $N=N_1+\dots+N_k,$ and $\delta=\transp (\gamma_1,\dots,\gamma_k,c_1,\dots,c_n)\in\C^{(n+k)\times 1}$. We put $A_l=({\bf a}^{(l)}(1)|\dots|{\bf a}^{(l)}(N_l))$ and define an $(n+k)\times N$ matrix $A$ by

\begin{equation}\label{CayleyConfigu}
A
=
\left(
\begin{array}{ccc|ccc|c|ccc}
1&\cdots&1&0&\cdots&0&\cdots&0&\cdots&0\\
\hline
0&\cdots&0&1&\cdots&1&\cdots&0&\cdots&0\\
\hline
 &\vdots& & &\vdots& &\ddots& &\vdots& \\
\hline
0&\cdots&0&0&\cdots&0&\cdots&1&\cdots&1\\
\hline
 &A_1& & &A_2& &\cdots & &A_k& 
\end{array}
\right).
\end{equation}

\noindent
Then, it is easy to see that (\ref{EulerInt}) is a solution of the GKZ system $M_A(\delta)$ (\cite[Theorem 2.7]{GKZEuler}). If $N_l=1$ for some $l$, the Laurent polynomial $h_l$ is a monomial $x^{\bf a}$ and we have a factorization $h_l(x)^{-\gamma_l}x^c=x^{c-\gamma_l{\bf a}}$. Therefore, in the followings, we may assume that $N_l\geq 2$ for any $l=1,\dots,k$. We further assume that the column vectors of $A$ generate the lattice $\Z^{(n+k)\times 1}$.

Let $z\in\C^N$ be a nonsingular point in the sense of \cite[Definition 3.7]{MatsubaraEuler}. We set 
\begin{equation}
V_z=\left\{ x\in(\Gm)^n\mid h_{1,z^{(1)}}(x)\cdots h_{k,z^{(k)}}(x)\neq 0\right\}
\end{equation}
and define an integrable connection $\nabla_x=d_x-\sum_{l=1}^k\gamma_l\frac{d_xh_l}{h_l}\wedge+\sum_{i=1}^nc_i\frac{dx_i}{x_i}\wedge:\mathcal{O}_{V_z}\rightarrow\Omega^1_{V_z}$. Here, $d_x$ is the exterior derivative on $V_z$. The dual connection of $(\mathcal{O}_{V_z},\nabla_x)$ is denoted by $(\mathcal{O}_{V_z}^\vee,\nabla_x^\vee)$. The local system of flat sections of $(\mathcal{O}_{V_z^{an}}^\vee,\nabla_x^{an\vee})$ is denoted by the symbol $\mathcal{L}_z$. Here, the superscript $an$ stands for the analytification. Let us write $\Phi_z$ for the multivalued function $h_{1,z^{(1)}}(x)^{-\gamma_1}\cdots h_{k,z^{(k)}}(x)^{-\gamma_k}x^c$. We define the twisted homology group $\Homo_k(V_z^{an};\mathcal{L}_z)$ as the $(-k)$-th cohomology group of the complex $\cdots\overset{\partial}{\rightarrow}\mathcal{C}_p(V_z^{an};\mathcal{L}_z)\overset{\partial}{\rightarrow}\mathcal{C}_{p-1}(V_z^{an};\mathcal{L}_z)\overset{\partial}{\rightarrow}\cdots$. Here, $\mathcal{C}_p(V_z^{an};\mathcal{L}_z)$ is the set of singular $p$-chains with coefficients in the local system $\mathcal{L}_z$, which is supposed to be at the $(-p)$-th position of the complex and $\partial$ is the boundary operator(\cite[\S 2.1.6]{AomotoKita}). Note that any singular $p$-chain $\Gamma$ takes the form $\Gamma=\sum_{j}a_j\Delta_j\otimes\Phi_z\restriction_{\Delta_j}$ where $a_j$ are complex numbers, $\Delta_j$ are continuous maps from a $p$-simplex to $V_z^{an}$ and $\Phi_z\restriction_{\Delta_j}$ are a section of $\mathcal{L}_z$ on the image of $c_j$. We call any element $\Gamma\in \mathcal{Z}_p(V_z^{an};\mathcal{L}_z):=\Ker\left(\partial:\mathcal{C}_p(V_z^{an};\mathcal{L}_z)\rightarrow\mathcal{C}_{p-1}(V_z^{an};\mathcal{L}_z)\right)$ a twisted cycle.

When $\delta$ is non-resonant in the sense of \cite[2.9]{GKZEuler} and $\gamma_l\notin\Z$, we have a canonical isomorphism 
\begin{equation}\label{eqn:Integration}
\Homo_n(V_z^{an},\mathcal{L}_z)\tilde{\rightarrow}\sol_{M_A(\delta),z},
\end{equation}
which is given by $\Homo_n(V_z^{an},\mathcal{L}_z)\ni[\Gamma]\mapsto \int_\Gamma \Phi_z\frac{dx}{x}$ (\cite[Theorem 2.10]{GKZEuler}, \cite[Theorem 3.13]{MatsubaraEuler}). Here, $\sol_{M_A(\delta),z}$ is the stalk of the solution sheaf of $M_A(\delta)$ at the point $z$. While this correspondence is highly transcendental, the solution space $\sol_{M_A(\delta),z}$ has a combinatorial structure when $z$ is close to a special point in a suitable toric compactification.

We briefly recall the construction of a basis of solutions of GKZ system in terms of $\Gamma$-series following the exposition of M.-C. Fern\'andez-Fern\'andez (\cite{FernandezFernandez}). For any vector $v\in\C^{N\times 1}$ such that $Av=-\delta,$ we put
\begin{equation}\label{GammaSeries1}
\varphi_v(z)=\displaystyle\sum_{u\in L_A}\frac{z^{u+v}}{\Gamma(1+u+v)}.
\end{equation}
It can readily be seen that $\varphi_{v}(z)$ is a formal solution of $M_A(\delta)$ (\cite{GKZToral}). We call (\ref{GammaSeries1}) a $\Gamma$-series solution of $M_A(\delta)$.
For any subset $\tau\subset\{1,\dots,N\}$, $A_\tau$ denotes the matrix given by the columns of $A$ indexed by $\tau.$ In the following, we take $\sigma\subset\{1,\dots,N\}$ such that the cardinality $|\sigma|$ is equal to $n+k$ and $\det A_\sigma\neq 0.$ Taking a vector ${\bf k}\in\Z^{\bar{\sigma}\times 1},$ we put
\begin{equation}
v_\sigma^{\bf k}=
\begin{pmatrix}
-A_{\sigma}^{-1}(\delta+A_{\bar{\sigma}}{\bf k})\\
{\bf k}
\end{pmatrix},
\end{equation}
where $\bs$ denotes the complement $\{ 1,\dots,N\}\setminus \s$. Then, by a direct computation, we have
\begin{equation}\label{seriesphi}
\varphi_{\s,{\bf k}}(z;\delta)\overset{\rm def}{=}\varphi_{v_\sigma^{\bf k}}(z)=
z_\sigma^{-A_\sigma^{-1}\delta}
\sum_{{\bf k+m}\in\Lambda_{\bf k}}\frac{(z_\sigma^{-A_\sigma^{-1}A_{\bar{\sigma}}}z_{\bar{\sigma}})^{\bf k+m}}{\Gamma({\bf 1}_\sigma-A_\sigma^{-1}(\delta+A_{\bar{\sigma}}({\bf k+m}))){\bf (k+m)!}},
\end{equation}
where $\Lambda_{\bf k}$ is given by
\begin{equation}\label{lambdak}
\Lambda_{\bf k}=\Big\{{\bf k+m}\in\Z^{\bar{\sigma}\times 1}_{\geq 0}\mid A_{\bar{\sigma}}{\bf m}\in\Z A_\sigma\Big\}.
\end{equation}
By \cite[Lemma 3.1,3.2, Remark 3.4.]{FernandezFernandez}, a complete set of representatives $\{ [A_{\barsigma}{\bf k}(i)]\}_{i=1}^{r_\s}$ of the finite abelian group $\Z^{(n+k)\times 1}/\Z A_\sigma$ induces a decomposition $\Z^{\barsigma\times 1}_{\geq 0}=\bigsqcup_{j=1}^{r_\s}\Lambda_{{\bf k}(j)}.$  Therefore, we can observe that $\{\varphi_{\sigma,{\bf k}(i)}(z;\delta)\}_{i=1}^{r_\s}$ is a set of $r_\s$ linearly independent formal solutions of $M_A(\delta)$ unless $\varphi_{\sigma,{\bf k}(i)}(z;\delta)=0$ for some $i$. In order to ensure that $\varphi_{\s,{\bf k}(i)}(z;\delta)$ does not vanish, we say that a parameter vector $\delta$ is very generic with respect to $\sigma$ if $A_\sigma^{-1}(\delta+A_{\bar{\sigma}}{\bf m})$ does not contain any integral entry for any ${\bf m}\in\mathbb{Z}_{\geq 0}^{\bar{\sigma}\times 1}.$ As is well-known in the literature, under a genericity condition, we can construct a basis of holomorphic solutions of  GKZ system $M_A(\delta)$ consisting of $\Gamma$-series with the aid of a regular triangulation. Let us recall the definition of regular triangulation. In general, for any subset $\sigma$ of $\{1,\dots,N\},$ $\cone(\sigma)$ denotes the positive span of the column vectors $\{{\bf a}(1),\dots,{\bf a}(N)\}$ of $A$, i.e., $\cone(\sigma)=\sum_{i\in\sigma}\R_{\geq 0}{\bf a}(i).$ We often identify a subset $\sigma\subset\{1,\dots,N\}$ with the corresponding set of vectors $\{{\bf a}(i)\}_{i\in\sigma}$ or with the set $\cone(\s)$. A collection $T$ of subsets of $\{1,\dots,N\}$ is called a triangulation if $\{\cone(\sigma)\mid \sigma\in T\}$ is a set of cones in a simplicial fan whose support equals $\cone(A)$. We regard $\Z^{1\times N}$ as the dual lattice of $\Z^{N\times 1}$ via the standard dot product. Let $\pi_A:\Z^{1\times N}\rightarrow L_A^\vee$ be the dual morphism of the natural inclusion $L_A\hookrightarrow \Z^{N\times 1}$ where $L_A^\vee$ is the dual lattice $\Hom_{\Z}(L_A,\Z)$. By abuse of notation, we continue to write $\pi_A$ for the linear map $\pi_A\underset{\Z}{\otimes}{\rm id}_{\R}:\R^{1\times N}\rightarrow L_A^\vee\underset{\Z}{\otimes}\R$ where ${\rm id}_{\R}:\R\rightarrow\R$ is the identity map. Then, for any generic choice of a vector $\omega\in\R^{1\times N},$ we can define a triangulation $T(\omega)$ as follows: A subset $\sigma\subset\{1,\dots,N\}$ belongs to $T(\omega)$ if there exists a vector ${\bf n}\in\R^{1\times (n+k)}$ such that ${\bf n}\cdot{\bf a}(i)=\omega_i$ if $i\in\sigma$ and ${\bf n}\cdot{\bf a}(j)<\omega_j$ if $ j\in\barsigma.$ A triangulation $T$ is called a regular triangulation if $T=T(\omega)$ for some $\omega\in\R^{1\times N}.$ For a fixed regular triangulation $T$, we say that the parameter vector $\delta$ is very generic if it is very generic with respect to any $\sigma\in T$. It is easy to see that if $\delta$ is very generic, $\delta$ must be non-resonant in the sense of \cite[2.9]{GKZEuler}. Now suppose that $\delta$ is very generic. We set 
\begin{equation}
W_\sigma=\left\{z\in(\C^*)^N\mid {\rm abs}\left(z_\sigma^{-A_\sigma^{-1}{\bf a}(j)}z_{j}\right)<R, \text{for all } j\in\bs\right\},
\end{equation}
where $R>0$ is a small positive real number and abs stands for the absolute value. With these terminologies, the following result is a special case of \cite[Theorem 6.7.]{FernandezFernandez}.

\begin{prop}
Suppose that a matrix of the form (\ref{CayleyConfigu}) is given. Fix a regular triangulation $T$ and assume $\delta$ is very generic. Then, the set 
$\Phi_T=\bigcup_{\sigma\in T}\left\{ \varphi_{\sigma,{\bf k}(i)}(z;\delta)\right\}_{i=1}^{r_\s}$ 
is a basis of holomorphic solutions of $M_A(\delta)$ on $W_{T}\overset{def}{=}\bigcap_{\sigma\in T}W_\sigma\neq\varnothing$.
\end{prop}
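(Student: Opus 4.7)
The plan is to reduce the statement to the known basis construction of Fern\'andez-Fern\'andez, verifying in our special Cayley-configuration setting the hypotheses that appear there. First I would check that $W_T$ is nonempty: since $T$ is regular, write $T=T(\omega)$ for some $\omega\in\R^{1\times N}$, and consider points of the form $z(\epsilon)_j=\epsilon^{\omega_j}$ for $\epsilon>0$ small. For $\sigma\in T$, the vector ${\bf n}\in\R^{1\times(n+k)}$ from the definition of $T(\omega)$ satisfies ${\bf n}\cdot A_\sigma=\omega_\sigma$ and ${\bf n}\cdot{\bf a}(j)<\omega_j$ for $j\in\barsigma$. Then $\mathrm{abs}(z_\sigma^{-A_\sigma^{-1}{\bf a}(j)}z_j)=\epsilon^{\omega_j-{\bf n}\cdot{\bf a}(j)}\to 0$ as $\epsilon\to 0$, so $z(\epsilon)\in W_\sigma$ for every $\sigma\in T$ when $\epsilon$ is sufficiently small. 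This produces a nonempty common domain $W_T$.

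Next I would establish convergence of each $\varphi_{\s,{\bf k}(i)}$ on $W_\sigma$. From the presentation \eqref{seriesphi}, the series is, up to the factor $z_\sigma^{-A_\sigma^{-1}\delta}$, a Taylor-like series in the $|\barsigma|$ monomials $w_j:=z_\sigma^{-A_\sigma^{-1}{\bf a}(j)}z_j$ ($j\in\barsigma$), and by Stirling the coefficients grow sub-exponentially in ${\bf m}$. Hence absolute convergence on $\{|w_j|<R\}=W_\sigma$ is automatic for $R$ small enough. Combined with the fact (recalled from \cite{FernandezFernandez}) that $\Z_{\geq 0}^{\bar\sigma\times 1}=\bigsqcup_{i=1}^{r_\s}\Lambda_{{\bf k}(i)}$, this shows that $\{\varphi_{\s,{\bf k}(i)}\}_{i=1}^{r_\s}$ consists of $r_\s$ holomorphic solutions on $W_\sigma$ and hence on $W_T$.

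For linear independence within a fixed $\s$, I would use the leading exponents: after dividing by $z_\sigma^{-A_\sigma^{-1}\delta}$, the leading monomial of $\varphi_{\s,{\bf k}(i)}$ is $(z_\sigma^{-A_\sigma^{-1}A_{\bar\sigma}}z_{\bar\sigma})^{{\bf k}(i)}$ and the exponent vectors $-A_\sigma^{-1}A_{\bar\sigma}{\bf k}(i)$ in the $\sigma$-coordinates (extended by ${\bf k}(i)$ in the $\bar\sigma$-coordinates) lie in distinct cosets of $\Z A_\sigma$ by the very choice of a system of representatives. The very generic hypothesis on $\delta$ guarantees nonvanishing of the leading coefficient. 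Hence the $r_\s$ functions are linearly independent on $W_\sigma$.

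The main obstacle is linear independence across different $\sigma\in T$, and this is where the regularity of $T$ enters crucially. The strategy is the asymptotic one: for the distinguished degeneration $z(\epsilon)$ above, I would examine the leading $\epsilon$-weight $-\omega\cdot v_\sigma^{{\bf k}(i)}$ of each $\varphi_{\s,{\bf k}(i)}(z(\epsilon);\delta)$ as $\epsilon\to 0$. The defining inequalities of $T(\omega)$ translate into the statement that the exponents attached to distinct $\sigma\in T$ cannot coincide on a nonzero element of $L_A$, so a putative linear relation $\sum_{\s,i}c_{\s,i}\varphi_{\s,{\bf k}(i)}\equiv 0$ on $W_T$ can be split into blocks indexed by $\sigma$ via the asymptotic expansion, and then the independence within each block forces all $c_{\s,i}=0$. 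Finally, a dimension count closes the argument: the total number of functions is $\sum_{\sigma\in T}r_\sigma=\sum_{\sigma\in T}|\det A_\sigma|=\vol_{\Z}(A)$, which equals the holonomic rank of $M_A(\delta)$ by Adolphson's theorem under the non-resonance implied by very genericity. Since we have produced $\vol_{\Z}(A)$ linearly independent holomorphic solutions on $W_T$, they form a basis, completing the reduction to \cite[Theorem 6.7]{FernandezFernandez}.
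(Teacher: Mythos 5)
The paper gives no proof of this proposition; it simply observes that it is a special case of Fern\'andez-Fern\'andez's Theorem~6.7. Your proposal instead reconstructs the underlying argument from scratch, which is a genuinely different route: it makes the proposition self-contained and exposes precisely where each hypothesis is used (regularity of $T$ for the existence of a common convergence domain, very genericity for non-vanishing of leading coefficients and for Adolphson's rank formula). Structurally your sketch is sound, and the dimension count $\sum_{\s\in T}r_\s=\sum_{\s\in T}\abs{\det A_\s}=\vol_{\Z}(A)=$ holonomic rank is exactly the right closing step.

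Two points in your sketch are looser than they should be. First, absolute convergence of (\ref{seriesphi}) is not a generic Stirling consequence for arbitrary $A$: the factor $1/\Gamma\bigl({\bf 1}_\s-A_\s^{-1}(\delta+A_{\bs}({\bf k+m}))\bigr)$ may grow super-exponentially along directions where entries of $A_\s^{-1}A_{\bs}{\bf m}$ tend to $-\infty$. What saves it here is the Cayley structure of (\ref{CayleyConfigu}): the sum of the first $k$ rows of $A$ is the all-ones vector, so $A$ is homogeneous, and hence $\mathbf{1}_\s A_\s^{-1}A_{\bs}=\mathbf{1}_{\bs}$; this balance between the gamma factors in numerator and denominator is what gives a positive radius of convergence. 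You should invoke this explicitly. Second, your cross-$\s$ independence argument attributes to the inequalities defining $T(\omega)$ the fact that exponents from distinct $\s$ ``cannot coincide on a nonzero element of $L_A$,'' but that role is actually played by very genericity, not by regularity. Concretely, for $\s\neq\s'$ pick $j\in\s\setminus\s'$; then the $j$-th entry of $v_\s^{\bf k}$ is a non-integer by very genericity while the $j$-th entry of $v_{\s'}^{{\bf k}'}$ is an integer, so $v_\s^{\bf k}-v_{\s'}^{{\bf k}'}\notin L_A$. Hence the supports of the $\Gamma$-series (which lie in $v+L_A$) are pairwise disjoint across all $(\s,i)$, and linear independence follows immediately from uniqueness of Laurent-Puiseux expansions on $W_T$, with no need to order exponents by $\omega$-weight. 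The regularity of $T$ is what you already used correctly at the start: it is what makes $W_T$ nonempty, giving a common domain on which all of these series converge simultaneously.
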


For a regular triangulation $T$, we set
\begin{equation}
C_T=\Big\{ \omega\in\R^{1\times N}\mid T(\omega)=T\Big\}.
\end{equation}

\noindent
We cite a fundamental result of Gelfand, Kapranov, and Zelevinsky (\cite[Chapter 7, Proposition 1.5.]{GKZbook},\cite[Theorem 5.2.11.]{DeLoeraRambauSantos}).

\begin{thm}[\cite{GKZbook},\cite{DeLoeraRambauSantos}]
There exists a projective fan ${\rm Fan}(A)$ in $\R^{1\times N}$ whose maximal cones are precisely $\{ C_T\}_{T: \text{regular triangulation}}$. The fan ${\rm Fan}(A)$ is called the secondary fan.
\end{thm}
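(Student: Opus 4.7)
The plan is to realise the collection $\{C_T\}_{T}$ as the set of relative interiors of the maximal normal cones of the \emph{secondary polytope} $\Sigma(A)\subset\R^{N}$ of Gelfand-Kapranov-Zelevinsky, from which both the fan property and its projectivity follow simultaneously.

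First, I would check that each $C_T$ is an open convex polyhedral cone. Fix a regular triangulation $T$; for each pair $(\sigma,j)$ with $\sigma\in T$, $j\notin\sigma$, and ${\bf a}(j)\in\cone(\sigma)$, write ${\bf a}(j)=\sum_{i\in\sigma}\lambda_i^{(\sigma,j)}{\bf a}(i)$. The functional ${\bf n}_\sigma$ from the definition of $T(\omega)$ is uniquely determined by ${\bf n}_\sigma\cdot{\bf a}(i)=\omega_i$ for $i\in\sigma$, so the condition ${\bf n}_\sigma\cdot{\bf a}(j)<\omega_j$ becomes the strict linear inequality
\[
\omega_j > \sum_{i\in\sigma}\lambda_i^{(\sigma,j)}\omega_i.
\]
A short compatibility check on faces shared by adjacent simplices of $T$ shows that the ${\bf n}_\sigma$ automatically piece together into a single continuous piecewise-linear function on $\cone(A)$. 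Consequently $C_T$ is cut out by a finite system of strict linear inequalities, hence is an open convex cone, with $\overline{C_T}$ cut out by the corresponding weak inequalities.

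Next, I would introduce the GKZ vectors
\[
\phi_T = \sum_{\sigma\in T}|\det A_\sigma|\sum_{i\in\sigma}e_i \in \R^{N},
\]
and set $\Sigma(A)=\mathrm{conv}\{\phi_T\mid T\text{ is a regular triangulation of }A\}$. The central ingredient is that, up to a positive constant independent of $\omega$ and $T$, the inner product $\omega\cdot\phi_T$ equals $\int_Q g_{T,\omega}(x)\,d\mu(x)$, where $Q$ is a transverse cross-section of $\cone(A)$, $\mu$ is Lebesgue measure, and $g_{T,\omega}$ is the continuous function on $Q$ that is affine on each simplex of $T$ and takes the value $\omega_i$ at ${\bf a}(i)$. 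Granting this, one observes that $T=T(\omega)$ is precisely the triangulation obtained by projecting the appropriate envelope of the lift $\{({\bf a}(i),\omega_i)\}_i$, so $g_{T(\omega),\omega}$ is pointwise extremal among $\{g_{T',\omega}\}_{T'}$. The integral identity then shows that $\phi_{T(\omega)}$ is the unique vertex of $\Sigma(A)$ extremising the linear functional $\phi\mapsto\omega\cdot\phi$, placing $\omega$ in the relative interior of the normal cone to $\Sigma(A)$ at $\phi_{T(\omega)}$, and the converse is immediate.

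The main obstacle is the integral identity: one has to verify that the coefficient of $e_i$ in $\phi_T$, namely $\sum_{\sigma\in T,\,\sigma\ni i}|\det A_\sigma|$, is proportional to the partial derivative $\frac{\partial}{\partial\omega_i}\int_Q g_{T,\omega}(x)\,d\mu(x)$. This reduces to a simplex-by-simplex computation using the fact that the integral of an affine function over a simplex equals its volume times the average of its vertex values. Once this identification is secured, everything else follows formally: the $\phi_T$ are genuine vertices of $\Sigma(A)$ with normal cones exactly $\overline{C_T}$; completeness $\bigcup_T\overline{C_T}=\R^{1\times N}$ holds because every $\omega$ induces a regular polyhedral subdivision that is refined by a regular triangulation after a small generic perturbation; and projectivity of ${\rm Fan}(A)$ is automatic since it is by construction the normal fan of the polytope $\Sigma(A)$.
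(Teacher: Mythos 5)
The paper does not actually prove this theorem---it is quoted directly from \cite{GKZbook} and \cite{DeLoeraRambauSantos}, so there is no internal argument to compare against. Your proposal reproduces, essentially verbatim, the argument of Gelfand--Kapranov--Zelevinsky: build the secondary polytope $\Sigma(A)$ as the convex hull of the GKZ characteristic vectors $\phi_T$, identify $\omega\cdot\phi_T$ (up to a fixed positive constant) with the integral of the $T$-piecewise-linear lift of $\omega$, observe that $g_{T(\omega),\omega}$ is the pointwise-minimal such lift because its graph is the lower envelope of the lifted points, and conclude that $C_T$ is the relative interior of the normal cone at the vertex $\phi_T$, so that $\mathrm{Fan}(A)$ is the normal fan of $\Sigma(A)$ and hence projective. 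This is the correct and standard route, and it is the one taken in the cited sources. Two small points worth making explicit: (i) the integral identity $\omega\cdot\phi_T \propto \int_Q g_{T,\omega}$ is clean precisely because the columns of $A$ all lie on a common affine hyperplane (guaranteed here by the Cayley form (\ref{CayleyConfigu}) or, more generally, by the homogeneity assumption in \S\ref{subsec:BV}); if you took an arbitrary transverse cross-section $Q$ you would pick up scaling factors $t_i$ from where each ray $\R_{>0}{\bf a}(i)$ meets $Q$, and the identity would not match $\phi_T$ without renormalization. (ii) To make the ``unique vertex'' step airtight you should note that for $\omega$ in the open set where $T(\omega)$ is an honest triangulation, distinct regular triangulations $T'\neq T(\omega)$ give $g_{T',\omega}$ strictly larger than $g_{T(\omega),\omega}$ on a set of positive measure, which is what forces the strict inequality $\omega\cdot\phi_{T(\omega)} < \omega\cdot\phi_{T'}$ and places $\omega$ in the \emph{relative interior} of the normal cone rather than merely in its closure.
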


\begin{rem}
Let $F$ be a fan obtained by applying the projection $\pi_A$ to each cone of ${\rm Fan}(A)$. By definition, each cone of ${\rm Fan}(A)$ is a pull-back of a cone of $F$ through the projection $\pi_A$. Therefore, the fan $F$ is also called the secondary fan.
\end{rem}

The basis $\{ \Gamma_{\s,\tilde{\bf k}}\}_{[\tilde{\bf k}]\in\Z^{\s\times 1}/\Z \transp A_{\s}}$ of $\Homo_n(V_z^{an},\mathcal{L}_z)$ constructed in \cite[\S6]{MatsubaraEuler} can be related to the basis $\Phi_T$ through the isomorphism (\ref{eqn:Integration}). We set $N_0=1$ and $I_l=\{ N_0+\dots+N_{l-1},\dots,N_1+\dots+N_l\}$ $(l=1,\dots,k)$. For any $(n+k)$-simplex $\s$, we define $\s^{(l)}$ by $\s\cap I_l$. Let us cite \cite[Theorem 6.5]{MatsubaraEuler} in our setting.

\begin{thm}\label{thm:fundamentalthm}
Take a regular triangulation $T$. Assume that the parameter vector $\delta$ is very generic and that $\gamma_l\notin\Z$ for any $l=1,\dots,k$. Then, if one puts
\begin{equation}
f_{\s,\tilde{\bf k}(j)}(z;\delta)=\frac{1}{(2\pi\ii)^{n+k}}\int_{\Gamma_{\s,\tilde{\bf k}(j)}} h_{1,z^{(1)}}(x)^{-\gamma_1}\cdots h_{k,z^{(k)}}(x)^{-\gamma_k}x^{c}\frac{dx}{x},
\end{equation}
$\displaystyle\bigcup_{\s\in T}\{ f_{\sigma,\tilde{\bf k}(j)}(z)\}_{j=1}^{r_\s}$ is a basis of solutions of $M_A(\delta)$ on the non-empty open set $W_T$, where $\{\tilde{\bf k}(j)\}_{j=1}^{r_\s}$ is a complete system of representatives of $\Z^{\s\times 1}/\Z\transp{A}_\s$. Moreover, for each $\sigma\in T,$ one has a transformation formula 
\begin{equation}
\begin{pmatrix}
f_{\sigma,\tilde{\bf k}(1)}(z;\delta)\\
\vdots\\
f_{\sigma,\tilde{\bf k}(r_\s)}(z;\delta)
\end{pmatrix}
=
T_\sigma
\begin{pmatrix}
\varphi_{\sigma,{\bf k}(1)}(z;\delta)\\
\vdots\\
\varphi_{\sigma,{\bf k}(r_\s)}(z;\delta)
\end{pmatrix}.
\end{equation}
Here, $T_\sigma$ is an $r_\s\times r_\s$ matrix given by 
\begin{align}
T_\sigma=&\sqrt{r_\s}C_\s(\gamma)
\diag\Big( \exp\left\{
2\pi\ii\transp{
\tilde{\bf k}(i)
}
A_{\s}^{-1}\delta
\right\}\Big)_{i=1}^{r_\s}
U_\s.
\end{align}
Here, $C_\s(\gamma)$ is a complex number defined by
\begin{equation}
C_\s(\gamma)=
\frac{{\rm sgn}(A,\s)\displaystyle\prod_{l:|\s^{(l)}|>1}e^{-\pi\ii(1-\gamma_l)}\displaystyle\prod_{l:|\s^{(l)}|=1}e^{-\pi\ii\gamma_l}}{\det A_\s\Gamma(\gamma_1)\cdots\Gamma(\gamma_k)\displaystyle\prod_{l:|\s^{(l)}|=1}(1-e^{-2\pi\ii\gamma_l})}
\end{equation}
with $\text{sgn} (A,\s)=\pm 1$ and $U_\s$ is a unitary $r_\s\times r_\s$ matrix given by 
\begin{equation}
U_\s=\frac{1}{\sqrt{r_\s}}\Big(
\exp\left\{
2\pi\ii\transp{
\tilde{\bf k}(i)
}
A_{\s}^{-1}A_{\bs}{\bf k}(j)
\right\}\Big)_{i,j=1}^{r_\s}.
\end{equation}

\noindent
The precise formula of $\text{sgn} (A,\s)$ can be found in \cite[\S6]{MatsubaraEuler}. In particular, if $z$ is nonsingular in the sense of \cite[Definition 3.7]{MatsubaraEuler}, $\Gamma_T=\displaystyle\bigcup_{\s\in T}\left\{ \left[\Gamma_{\s,\tilde{\bf k}(j)}\right]\right\}_{j=1}^{r_\s}$ is a basis of the twisted homology group $\Homo_{n}(V_z^{an},\mathcal{L}_z).$
\end{thm}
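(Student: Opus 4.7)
The statement is a restatement of \cite[Theorem 6.5]{MatsubaraEuler} in the Cayley configuration (\ref{CayleyConfigu}) setting, and my plan is to make the three ingredients---construction of cycles, termwise evaluation of the integrals, and identification with the $\Gamma$-series basis $\Phi_T$---explicit in our setup.

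\emph{Construction of cycles.} For each simplex $\s\in T$, since $\det A_\s\neq 0$, the monomial change of variables encoded by $A_\s$ furnishes local toric coordinates in which $\Phi_z\,\frac{dx}{x}$ factorizes as $y^{-A_\s^{-1}\delta}\,\frac{dy}{y}$ times a convergent power series in the small parameters $z_\s^{-A_\s^{-1}{\bf a}(j)}z_j$ ($j\in\bs$), whose convergence region is exactly $W_\s$. The monodromy of $\Phi_z$ along the small torus around the origin in the $y$-coordinates factors through the finite abelian group $\Z^{\s\times 1}/\Z\transp A_\s$. For each class $[\tilde{\bf k}(j)]$ in this group, lifting the associated character to a cycle on $V_z^{an}$ yields $\Gamma_{\s,\tilde{\bf k}(j)}$; there are $r_\s = |\det A_\s|$ of them, matching the order of the Pontryagin-dual group $\Z^{(n+k)\times 1}/\Z A_\s$.

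\emph{Termwise integration.} Substituting the local expansion of $\Phi_z\,\frac{dx}{x}$ into $\int_{\Gamma_{\s,\tilde{\bf k}(j)}}$ and integrating term-by-term, only the monomials whose exponent in $\Z^{(n+k)\times 1}$ belongs to the coset paired with $[\tilde{\bf k}(j)]$ under the perfect pairing $\Z^{(n+k)\times 1}/\Z A_\s\times \Z^{\s\times 1}/\Z\transp A_\s\to\C^\times$, $([{\bf v}],[{\bf w}])\mapsto \exp(\tpi\,\transp{\bf w} A_\s^{-1}{\bf v})$, survive. The $\Gamma$-function denominators appearing in $\varphi_{\s,{\bf k}(i)}(z;\delta)$ arise from iterated small-loop integrals, the leading monomial $z_\s^{-A_\s^{-1}\delta}$ from the value of $\Phi_z$ along the cycle, and the phase $\exp(\tpi\,\transp{\tilde{\bf k}(i)}A_\s^{-1}\delta)$ from the monodromy factor captured by the $\tilde{\bf k}(i)$-character. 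The constant $C_\s(\gamma)$ collects the branch factors $e^{-\pi\ii\gamma_l}$ produced by regularizing loops around the divisors $\{h_l=0\}$, the orientation sign $\sgn(A,\s)$, and the Gamma-function denominators coming from the Beta-integral identities used to normalize the loops.

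\emph{Basis property.} Summing over the characters yields the formula for $T_\s$; the matrix $U_\s$ is (up to the scalar $1/\sqrt{r_\s}$) the character table of the finite abelian group $\Z^{(n+k)\times 1}/\Z A_\s$, hence unitary by Schur orthogonality. Since $\gamma_l\notin\Z$ forces $C_\s(\gamma)\neq 0$ and the diagonal phases are nonzero, $T_\s$ is invertible; combined with the preceding proposition, which asserts that $\Phi_T$ is a basis of solutions on $W_T$, this gives the basis property of $\{f_{\s,\tilde{\bf k}(j)}\}$. Pulling back along the isomorphism (\ref{eqn:Integration}) then shows that $\Gamma_T$ is a basis of $\Homo_n(V_z^{an},\mathcal{L}_z)$ whenever $z$ is nonsingular. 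The main obstacle is the sign-and-phase bookkeeping in the second step: the precise expression for $\sgn(A,\s)$ and the exact power of $e^{-\pi\ii\gamma_l}$ in $C_\s(\gamma)$ require a globally consistent choice of orientations on the cycles $\Gamma_{\s,\tilde{\bf k}(j)}$ and of branches of $\Phi_z$ on a fixed base chart; this is precisely the combinatorial bookkeeping carried out in \cite[\S 6]{MatsubaraEuler}.
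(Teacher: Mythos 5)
This theorem is not proved in the paper; the text immediately preceding it says ``Let us cite \cite[Theorem 6.5]{MatsubaraEuler} in our setting,'' so the paper's ``proof'' is a citation. Your attempt to reconstruct the argument has the right overall architecture (local toric coordinates attached to $\s$, termwise integration, character theory, invertibility), but the central claim of your second step is wrong. You assert that when one substitutes the local expansion of $\Phi_z\frac{dx}{x}$ and integrates term-by-term along $\Gamma_{\s,\tilde{\bf k}(j)}$, ``only the monomials whose exponent belongs to the coset paired with $[\tilde{\bf k}(j)]$ survive.'' If that were true, each $f_{\s,\tilde{\bf k}(j)}$ would be a scalar multiple of a single $\varphi_{\s,{\bf k}(i)}$ and $T_\s$ would be a generalized permutation matrix with exactly one nonzero entry per row. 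But the theorem states
\begin{equation}
T_\s=\sqrt{r_\s}\,C_\s(\gamma)\,\diag\bigl(e^{2\pi\ii\,\transp{\tilde{\bf k}(i)}A_\s^{-1}\delta}\bigr)_{i}\,U_\s,\qquad
U_\s=\tfrac{1}{\sqrt{r_\s}}\bigl(e^{2\pi\ii\,\transp{\tilde{\bf k}(i)}A_\s^{-1}A_{\bs}{\bf k}(j)}\bigr)_{i,j},
\end{equation}
so every entry of $T_\s$ has the same nonzero absolute value $|C_\s(\gamma)|$: each $f_{\s,\tilde{\bf k}(j)}$ is a linear combination of \emph{all} $r_\s$ series $\varphi_{\s,{\bf k}(i)}$ with nonzero coefficients. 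What actually happens is the opposite of selection: $\Gamma_{\s,\tilde{\bf k}(j)}$ is a deck transform (relative to the covering attached to $A_\s$) of a base Pochhammer-type cycle $\Gamma_{\s,{\bf 0}}$, and the deck transformation multiplies the Laurent modes of the integrand by the character associated to $[\tilde{\bf k}(j)]$, so all cosets $[{\bf k}(i)]$ contribute, each weighted by the phase $e^{2\pi\ii\,\transp{\tilde{\bf k}(j)}A_\s^{-1}A_{\bs}{\bf k}(i)}$. This is precisely the mechanism that produces the character-table matrix $U_\s$ and is the content of the formula $\langle\varphi_{\bf k},C^{(\tilde{\bf k})}\rangle=e^{2\pi\ii\,\transp{\tilde{\bf k}}M^{-1}(\alpha+{\bf k})}\langle\varphi_{\bf k},C^{({\bf 0})}\rangle$ established in \S\ref{sec:3.4} of this paper.

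Your step 3 is also internally inconsistent with step 2: you say ``summing over the characters yields the formula for $T_\s$,'' but if only one coset survived there would be nothing to sum. The correct chain is: (i) expand $\Phi_z\frac{dx}{x}$ in the $A_\s$-adapted coordinates and integrate along the base cycle to obtain, via the Beta/Pochhammer integral, the full sum $\sum_i\varphi_{\s,{\bf k}(i)}$ multiplied by the Gamma-factors collected in $C_\s(\gamma)$; (ii) replace the base cycle by its $[\tilde{\bf k}(j)]$-deck transform, which reweights the $i$-th summand by $e^{2\pi\ii\,\transp{\tilde{\bf k}(j)}A_\s^{-1}A_{\bs}{\bf k}(i)}$ and the overall normalization by $e^{2\pi\ii\,\transp{\tilde{\bf k}(j)}A_\s^{-1}\delta}$; (iii) invoke Schur orthogonality for the unitarity of $U_\s$, and $\gamma_l\notin\Z$ for $C_\s(\gamma)\neq 0$, to conclude that $T_\s$ is invertible; (iv) combine with the statement that $\Phi_T$ is a basis on $W_T$ and pull back along (\ref{eqn:Integration}). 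Apart from the sign $\sgn(A,\s)$ bookkeeping, which you correctly defer to \cite[\S6]{MatsubaraEuler}, this is the argument; but as written your step 2 gets the survival/weighting mechanism backwards.
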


\subsection{Generalities on homology and cohomology intersection pairing}
Before we state our main result, we need to recall the twisted period relation. We define the algebraic de Rham cohomology group $\Homo_{\rm dR}^*\left( V_z;(\mathcal{O}_{V_z},\nabla_x)\right)$ as the hypercohomology group $\mathbb{H}^*\left( V_z;(\cdots\overset{\nabla_x}{\rightarrow}\Omega_{V_z}^\bullet\overset{\nabla_x}{\rightarrow}\cdots)\right)$. Since $V_z$ is an affine variety, we can compute $\Homo_{\rm dR}^*\left( V_z;(\mathcal{O}_{V_z},\nabla_x)\right)$ by the formula $\Homo^*\left( \cdots\overset{\nabla_x}{\rightarrow}\Omega_{V_z}^\bullet(V_z)\overset{\nabla_x}{\rightarrow}\cdots\right)$. Let us write $\Homo^n_{dR,c}\left( V_z^{an};(\mathcal{O}_{V_z^{an}},\nabla_x^{an})\right)$ for the analytic de Rham cohomology group with compact support. By Poincar\'e-Verdier duality, the bilinear pairing 
\begin{equation}
\begin{array}{ccc}
\Homo^n_{dR,c}\left( V_z^{an};(\mathcal{O}_{V_z^{an}},\nabla_x^{an})\right)\times\Homo^n_{dR}\left( V_z^{an};(\mathcal{O}_{V_z^{an}}^\vee,\nabla_x^{an\vee})\right)&\rightarrow&\mathbb{C}\\
\rotatebox{90}{$\in$}& &\rotatebox{90}{$\in$}\\
([\phi],[\psi])&\mapsto&\int_{V_z^{an}}\phi\wedge\psi
\end{array}
\end{equation}
is perfect. We say that the regularization condition is satisfied if the canonical morphism $\Homo^n_{dR,c}\left( V_z^{an};(\mathcal{O}_{V_z^{an}},\nabla_x^{an})\right)$ $\rightarrow$ $\Homo^n_{dR}\left( V_z^{an};(\mathcal{O}_{V_z^{an}},\nabla_x^{an})\right)$ is an isomorphism. In the following, we always assume that the regularization condition is satisfied. Note that the regularization condition implies the purity $\Homo^m_{dR}\left( V_z;(\mathcal{O}_{V_z},\nabla_x)\right)=0$ $(m\neq n)$. When $\delta$ is non-resonant and $\gamma_l\notin\Z$, the regularization condition is true (\cite[2.15]{GKZEuler}, see also \cite[Theorem 2.12]{MatsubaraEuler}). Since $(\mathcal{O}_{V_z},\nabla_x)$ is a regular connection, the canonical morphism $\Homo^n_{dR}\left( V_z;(\mathcal{O}_{V_z},\nabla_x)\right)\rightarrow\Homo^n_{dR}\left( V_z^{an};(\mathcal{O}_{V_z^{an}},\nabla_x^{an})\right)$ is always an isomorphism by the Deligne-Grothendieck comparison theorem (\cite[Corollaire 6.3]{Deligne}). Therefore, we have a canonical isomorphism ${\rm reg}:\Homo^n_{dR}\left( V_z;(\mathcal{O}_{V_z},\nabla_x)\right)\rightarrow\Homo^n_{dR,c}\left( V_z^{an};(\mathcal{O}_{V_z^{an}},\nabla_x^{an})\right)$. Note that the Poincar\'e dual of the isomorphism ${\rm reg}$ is called a regularization map in the theory of special functions (\cite[\S 3.2]{AomotoKita}). Finally, we define the cohomology intersection form $\langle\bullet,\bullet\rangle_{ch}$ between algebraic de Rham cohomology groups by the formula
\begin{equation}\label{eqn:ACIF}
\begin{array}{cccc}
\langle\bullet,\bullet\rangle_{ch}:&\Homo^n_{dR}\left( V_z;(\mathcal{O}_{V_z},\nabla_x)\right)\times\Homo^n_{dR}\left( V_z;(\mathcal{O}_{V_z}^\vee,\nabla_x^{\vee})\right)&\rightarrow&\mathbb{C}\\
&\rotatebox{90}{$\in$}& &\rotatebox{90}{$\in$}\\
&([\phi],[\psi])&\mapsto&\int_{V_z^{an}}{\rm reg}(\phi)\wedge\psi.
\end{array}
\end{equation}
The value above is called the cohomology intersection number of
$[\phi]$ and $[\psi]$. By abuse of notation, we often write $\langle\phi,\psi\rangle_{ch}$ for $\langle[\phi],[\psi]\rangle_{ch}$. Through Poincar\'e duality $\Homo^n_{dR}\left( V_z^{an};(\mathcal{O}_{V_z^{an}},\nabla_x^{an})\right)\simeq \Homo_n(V_z^{an};\mathcal{L}_z^\vee)$ and $\Homo^n_{dR}\left( V_z^{an};(\mathcal{O}_{V_z^{an}}^\vee,\nabla_x^{an\vee})\right)\simeq \Homo_n(V_z^{an};\mathcal{L}_z)$, we can define a perfect pairing 
\begin{equation}\label{eqn:HIP}
\langle\bullet,\bullet\rangle_{h}:\Homo_n(V_z^{an};\mathcal{L}_z)\times\Homo_n(V_z^{an};\mathcal{L}_z^\vee)\rightarrow\mathbb{C}
\end{equation}
called the homology intersection pairing. By abuse of notation, we write $\langle\gamma,\gamma^\vee\rangle_{h}$ for $\langle[\gamma],[\gamma^\vee]\rangle_{h}$. Again by Poincar\'e duality, we can define twisted period pairings $\Homo_n(V_z^{an};\mathcal{L}_z)\times\Homo^n_{dR}\left( V_z;(\mathcal{O}_{V_z},\nabla_x)\right)\ni ([\Gamma],[\phi])\mapsto \int_{\Gamma} \Phi_z\phi\in\C$ and $\Homo_n(V_z^{an};\mathcal{L}_z^\vee)\times\Homo^n_{dR}\left( V_z;(\mathcal{O}_{V_z}^\vee,\nabla_x^\vee)\right)\ni ([\Gamma^\vee],[\psi])\mapsto \int_{\Gamma^\vee} \Phi_z^{-1}\psi\in\C$.

Now, we are ready to state the twisted period relation. Let us fix any nonsingular $z\in\C^N$ and consider four bases $\{[\phi_i]\}_{i=1}^r\subset\Homo^n_{dR}\left( V_z;(\mathcal{O}_{V_z},\nabla_x)\right)$, $\{[\psi_i]\}_{i=1}^r\subset\Homo_{dR}^n\left( V_z;(\mathcal{O}_{V_z}^\vee,\nabla_x^\vee)\right)$, $\{ [\gamma_i]\}_{i=1}^r\subset \Homo_n(V_z^{an};\mathcal{L}_z)$, and $\{ [\gamma_i^\vee]\}_{i=1}^r\subset\Homo_n(V_z^{an};\mathcal{L}_z^{\vee})$. Since $\langle\bullet,\bullet\rangle_h$ and $\langle\bullet,\bullet\rangle_{ch}$ are perfect pairings, intersection matrices $I_{ch}=(\langle \phi_i, \psi_j\rangle_{ch})_{i,j}$ and $I_h=(\langle \gamma_i,\gamma_j^\vee\rangle_h)_{i,j}$ are both invertible. On the other hand, period matrices $P=(\int_{\gamma_j} \Phi_z\phi_i)_{i,j}$ and $P^\vee=(\int_{\gamma^\vee_j}\Phi_z^{-1}\psi_i)_{i,j}$ are also well-defined and invertible. The twisted period relation \cite[Theorem 2]{ChoMatsumoto} is a transcendental analogue of the Riemann-Hodge bilinear relation:
\begin{equation}
I_{ch}=P{}^t I_h^{-1}{}^tP^\vee.
\end{equation}
If we set $I_h^{-1}=(C^{ij})_{i,j}$, we have 
\begin{equation}\label{GeneralQR}
\langle \phi,\psi\rangle_{ch}=\sum_{i,j}\left( \int_{\gamma_i}\Phi_z\phi\right)C^{ji}\left( \int_{\gamma_j^\vee}\Phi_z^{-1}\psi\right)
\end{equation}
for any cohomology classes $[\phi]\in\Homo^n_{dR}\left( V_z;(\mathcal{O}_{V_z},\nabla_x)\right)$ and $[\psi]\in\Homo_{dR}^n\left( V_z;(\mathcal{O}_{V_z}^\vee,\nabla_x^\vee)\right)$.

\subsection{Main results}\label{sec:2.3}

Taking a regular triangulation $T$, we consider bases $\{ \left[\Gamma_{\s,\tilde{\bf k}}\right]\}_{[\tilde{\bf k}]\in\Z^{\s\times 1}/\Z \transp A_{\s}}$ of $\Homo_n(V_z^{an},\mathcal{L}_z)$ and $\{ \left[\check{\Gamma}_{\s,\tilde{\bf k}}\right]\}_{[\tilde{\bf k}]\in\Z^{\s\times 1}/\Z \transp A_{\s}}$ of $\Homo_n(V_z^{an};\mathcal{L}_z^{\vee})$. We claim that the homology intersection matrix $I_h$ with respect to these bases has a closed form. In view of \cite[Proposition 7.1]{MatsubaraEuler}, we see that $I_h$ is a direct sum $I_h=\oplus_{\s\in T}I_{\s,h}$ where the summand $I_{\s,h}$ is defined by $I_{\s,h}=(\langle\Gamma_{\s,\tilde{\bf k}_1},\check{\Gamma}_{\s,\tilde{\bf k}_2}\rangle_h)_{[\tilde{\bf k}_1],[\tilde{\bf k}_2]\in\Z^{\s\times 1}/\Z \transp A_{\s}}$. For any vector $v\in\C^{(n+k)\times 1}$, the $i$($\in\s$)-th entry of $A_\s^{-1}v\in\C^{\s\times 1}$ is denoted by the symbol $p_{\s i}(v)$. For any $(n+k)$-simplex $\s$ and $[{\bf k}]\in\Z^{(n+k)\times 1}/\Z A_\s$, we set
\begin{equation}\label{eq:Eigen1}
H_{A_\s,{\bf k}}(\delta)=\prod_{l:|\s^{(l)}|>1}\left(1-e^{2\pi\ii\gamma_l}\right)\prod_{i\in\s^{(l)}}\left( 1-e^{-2\pi\ii p_{\s i}(\delta+{\bf k})}\right).
\end{equation}


\noindent
We have the following theorem whose proof will be given in \S\ref{section:Lifts}.
\begin{thm}\label{thm:SigmaIntersectionMatrix1}
\begin{equation}\label{SigmaIntersectionMatrix1}
I_{\s,h}=\diag\left( e^{2\pi\ii{}^t\tilde{\bf k}A_\s^{-1}\delta}\right)_{[\tilde{\bf k}]\in\Z^{\s\times 1}/\Z \transp A_{\s}}U_\s\diag\left( H_{A_\s,{\bf k}}(\delta)\right)_{[{\bf k}]\in\Z^{(n+k)\times 1}/\Z A_\s}U_\s^*\diag\left( e^{-2\pi\ii{}^t\tilde{\bf k}A_\s^{-1}\delta}\right)_{[\tilde{\bf k}]\in\Z^{\s\times 1}/\Z \transp A_{\s}}.
\end{equation}
\end{thm}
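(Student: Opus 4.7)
The proof strategy is to reduce to the square-matrix case $N=n+k$ (in which $\s=\{1,\dots,N\}$ and $A_\s=A$) and then to exhibit $I_{\s,h}$ as a Fourier-diagonalized matrix with respect to $U_\s$. The direct-sum decomposition $I_h=\bigoplus_{\s\in T}I_{\s,h}$ from \cite[Proposition 7.1]{MatsubaraEuler} already isolates the block $I_{\s,h}$, and by the construction of \cite[\S 6]{MatsubaraEuler} each cycle $\Gamma_{\s,\tilde{\bf k}}$ is supported in a small neighborhood of the torus orbit corresponding to the vertex of the secondary fan dual to $\s$. Near that orbit the hypersurfaces with monomial support in $\bs$ recede to infinity, which should permit replacing $A$ by $A_\s$ without changing the intersection pairing. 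Justifying this toric degeneration and exhibiting an explicit square-case model for $\Gamma_{\s,\tilde{\bf k}}$ and $\check{\Gamma}_{\s,\tilde{\bf k}}$ is the geometric content that I expect to fill \S 3.1--3.4.

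In the square case, I would introduce an auxiliary basis $\{\gamma_{\s,{\bf k}}\}_{[{\bf k}]\in\Z^{(n+k)\times 1}/\Z A_\s}$ of Pochhammer-type eigencycles on the cover of $V_z$ corresponding to the finite Galois group $\Z^{(n+k)\times 1}/\Z A_\s$, on which the intersection pairing is diagonal with entries that I identify as $H_{A_\s,{\bf k}}(\delta)$. The eigenvalue $H_{A_\s,{\bf k}}(\delta)$ is read off by factoring the Pochhammer self-intersection into a product of local loop contributions: each factor $(1-e^{2\pi\ii\gamma_l})$ for $l$ with $|\s^{(l)}|>1$ arises from a small loop around $h_l=0$ along which $\Phi_z$ has monodromy $e^{-2\pi\ii\gamma_l}$, while each factor $(1-e^{-2\pi\ii p_{\s i}(\delta+{\bf k})})$ comes from a loop around the $i$-th torus-coordinate axis whose exponent is the $i$-th entry of $-A_\s^{-1}(\delta+A_{\bs}{\bf k})$; the absence of a factor when $|\s^{(l)}|=1$ reflects that $h_l$ is then a monomial and no encircling loop is available. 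Pontrjagin duality identifies $\Z^{(n+k)\times 1}/\Z A_\s$ with the character group $\Z^{\s\times 1}/\Z\transp{A_\s}$ via the nondegenerate pairing $(A_{\bs}{\bf k},\tilde{\bf k})\mapsto e^{2\pi\ii\transp{\tilde{\bf k}}A_\s^{-1}A_{\bs}{\bf k}}$, and this is exactly the pairing packaged into $U_\s$ in \cref{thm:fundamentalthm}. Expressing $\Gamma_{\s,\tilde{\bf k}}$ as the Fourier transform of $\{\gamma_{\s,{\bf k}}\}$ along this pairing (up to the diagonal branch twist $e^{2\pi\ii\transp{\tilde{\bf k}}A_\s^{-1}\delta}$ coming from the choice of branch of $\Phi_z$ used to define $\Gamma_{\s,\tilde{\bf k}}$) then sandwiches $\diag(H_{A_\s,{\bf k}})$ between $U_\s$ and $U_\s^*$ and conjugates by the diagonal phase, yielding precisely \eqref{SigmaIntersectionMatrix1}.

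The principal obstacle I expect is the reduction step: one must verify invariance of the intersection numbers under the degeneration that sends the $\bs$-columns to infinity, which is a genuinely geometric argument requiring a toric compactification on which both $\Gamma_{\s,\tilde{\bf k}}$ and $\check{\Gamma}_{\s,\tilde{\bf k}}$ extend to chains whose pairing can be matched with the square-case model. A secondary but delicate point is the bookkeeping of branches of $\Phi_z$ along the various pieces of each Pochhammer contour, which dictates not only the sign pattern inside $H_{A_\s,{\bf k}}$ but also the relative placement of $U_\s^{*}$ versus $U_\s$ and of the left and right diagonal twists in \eqref{SigmaIntersectionMatrix1}.
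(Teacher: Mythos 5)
Your high-level picture is consistent with the paper's: isolate the block $I_{\s,h}$ via the direct-sum decomposition from \cite[Proposition 7.1]{MatsubaraEuler}, reduce to a square matrix $A_\s$, and obtain the formula as a Fourier conjugation by $U_\s$ of a diagonal matrix indexed by the dual group $\Z^{(n+k)\times 1}/\Z A_\s$. You also correctly identify the two places where the work lies (the reduction to the square case and the branch bookkeeping) and the qualitative origin of the factors in $H_{A_\s,{\bf k}}$. However, the central step of your argument, as written, begs the question. You posit geometric ``Pochhammer-type eigencycles'' $\gamma_{\s,{\bf k}}$ indexed by $\Z^{(n+k)\times 1}/\Z A_\s$ on which the intersection pairing is \emph{diagonal} with entries $H_{A_\s,{\bf k}}(\delta)$, and then read off $H$ by ``factoring the Pochhammer self-intersection into a product of local loop contributions.'' But such cycles are not constructed anywhere; the vectors that diagonalize $I_{\s,h}$ are complex Fourier combinations of the $\Gamma_{\s,\tilde{\bf k}}$, not bona fide twisted cycles, and asserting that a geometric diagonalizing basis exists with precisely those eigenvalues is exactly the content of Theorem~\ref{thm:SigmaIntersectionMatrix1}. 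Moreover, the self-intersection of a regularized simplex is $\bigl(1-\prod_j e^{2\pi\ii\alpha_j}\bigr)/\prod_j(1-e^{2\pi\ii\alpha_j})$, which does \emph{not} factor as a naive product of $(1-\text{monodromy})$ terms; the product form of $H$ only emerges after one multiplies by the normalizing factor $m(\alpha+{\bf k},\tilde{\alpha})m(-\alpha-{\bf k},-\tilde{\alpha})$ coming from the transition between the covering-construction cycles $\tC^{(\tilde{\bf k})}$ and the Pochhammer cycles $C^{(\tilde{\bf k})}$, which is a computation you have not indicated.

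What the paper actually does is different in a way that matters: it works with the \emph{Galois group} of a finite toric covering $p_M:\tX\to X$ (the group $\Z^{\s\times 1}/\Z\transp{A}_\s$, not the dual), constructs explicit lifts $\tC^{({\bf q})}$ of the regularized bounded chamber on the cover, proves in Proposition~\ref{prop:lift-intersection} that the base intersection number is the sum of pairings of a single lift against \emph{all} lifts of the second cycle, and then computes the full (non-diagonal) intersection matrix $\Ihgoto$ entry by entry by a careful branch-and-winding analysis in the diagonal case (Proposition~\ref{prop:eigenvalue-diagonal}, by induction on $n$). The Fourier vectors are then shown \emph{a posteriori} to be eigenvectors with the stated eigenvalues; nothing is diagonal at the level of geometric cycles. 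The passage from the diagonal to the general square case factors one covering through another ($p_D=p_M\circ p_{\bar M}$), and the transition matrix $B$ between $\tC^{(\tilde{\bf k})}$ and the cycles $\Gamma'_{\s,\tilde{\bf k}}$ from \cite{MatsubaraEuler} is extracted from Pochhammer period integrals. Your proposal omits the lift construction of Proposition~\ref{prop:lift-intersection} and the explicit off-diagonal intersection computation, which together are the technical heart of the proof; without them one cannot conclude that the off-diagonal interference cancels in the Fourier basis, nor obtain the precise form of $H_{A_\s,{\bf k}}$. Finally, the reduction to the square case is not a separate toric degeneration argued here; the paper simply invokes \cite[\S7]{MatsubaraEuler}, where it is established that the pairing $I_{\s,h}$ depends only on a local model governed by $A_\s$, so your ``principal obstacle'' is real but already dispatched in the cited work.
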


For any pair of complex numbers $\alpha,\beta$ such that $\alpha+\beta\notin\{ 0,-1,-2,\dots\}$, we set $(\alpha)_{\beta}=\frac{\Gamma(\alpha+\beta)}{\Gamma(\alpha)}$. In general, for any pair of complex vectors $\alpha=(\alpha_1,\dots,\alpha_n)$, $\beta=(\beta_1,\dots,\beta_n)$ such that $\alpha_i+\beta_i\notin\{ 0,-1,-2,\dots\}$, we set $(\alpha)_{\beta}=(\alpha_1)_{\beta_1}\cdots (\alpha_n)_{\beta_n}$. For any vectors ${\bf a}={}^t(a_1,\dots,a_n)\in\Z^{n\times 1}$ and ${\bf b}={}^t(b_1,\dots,b_k)\in\Z^{k\times 1}$, we set $x^{\bf a}h^{\bf b}=x_1^{a_1}\cdots x_n^{a_n}h_1^{b_1}\cdots h_k^{b_k}$. We are in a position to state and prove the main
\begin{thm}\label{TheQuadraticRelation}
Take any regular triangulation $T$, ${\bf a}, {\bf a}^\prime\in\Z^{n\times 1}$ and ${\bf b}, {\bf b}^\prime\in\Z^{k\times 1}$. Suppose that $\gamma_l\notin\Z$ and $\delta$, $\left(\substack{\gamma-{\bf b}\\ c+{\bf a}}\right)$ and $\left(\substack{-\gamma-{\bf b^\prime}\\ -c+{\bf a^\prime}}\right)$ are very generic with respect to $T$. Then, the following identity holds:
\begin{align}
&\frac{\langle x^{\bf a}h^{\bf b}\frac{dx}{x},x^{\bf a^\prime}h^{\bf b^\prime}\frac{dx}{x} \rangle_{ch}}{(2\pi\ii)^n}\nonumber\\
=&(-1)^{|{\bf b}|+|{\bf b^\prime}|}(\gamma-{\bf b})_{\bf b}(-\gamma-{\bf b^\prime})_{\bf b^\prime}\gamma_1\cdots\gamma_k\times\nonumber\\
&\sum_{\s\in T}\sum_{[A_{\bs}{\bf k}]\in\Z^{(n+k)\times 1}/\Z A_\s}\frac{(-1)^{|{\bf k}|}}{|\det A_\s|}\frac{\pi^{n+k}}{\sin\pi A_\s^{-1}(\delta+A_{\bs}{\bf k})}
\varphi_{\s,{\bf k}_{\bf a,b}}\Big( z;\substack{\gamma-{\bf b}\\ c+{\bf a}}\Big)
\varphi_{\s,{\bf k}^\prime_{\bf a^\prime,b^\prime}}\Big( z;\substack{-\gamma-{\bf b^\prime}\\ -c+{\bf a^\prime}}\Big).\label{eqn:CIF}
\end{align}
Here, ${\bf k}_{\bf a,b}$ (resp. ${\bf k}^\prime_{\bf a^\prime,b^\prime}$) denotes the element such that 
$[A_{\bs}{\bf k}+
\begin{pmatrix}
{\bf b}\\
-{\bf a}
\end{pmatrix}
]=
[A_{\bs}{\bf k}_{\bf a,b}]$ (resp. $[-A_{\bs}{\bf k}+
\begin{pmatrix}
{\bf b^\prime}\\
-{\bf a^\prime}
\end{pmatrix}
]=
[A_{\bs}{\bf k}^\prime_{\bf a^\prime,b^\prime}]$) holds in the group $\Z^{(n+k)\times 1}/\Z A_\s$.
\end{thm}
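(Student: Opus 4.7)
Apply the twisted period relation (\ref{GeneralQR}) to the classes $\phi = x^{\bf a} h^{\bf b}\tfrac{dx}{x}$ and $\psi = x^{{\bf a}'} h^{{\bf b}'}\tfrac{dx}{x}$, using the bases $\Gamma_T \subset \Homo_n(V_z^{an};\mathcal{L}_z)$ and $\check\Gamma_T \subset \Homo_n(V_z^{an};\mathcal{L}_z^\vee)$ from Theorem \ref{thm:fundamentalthm}. Because the intersection matrix decomposes as $I_h = \bigoplus_{\s \in T} I_{\s,h}$, the twisted period relation reduces to a sum over $\s \in T$, which matches the outer sum on the right of (\ref{eqn:CIF}); so the proof reduces to computing, for each $\s$, the matrix sandwich $\transp{P^\vee_\s}\, I_{\s,h}^{-1}\, P_\s$ where $P_\s$ and $P^\vee_\s$ are the column vectors of period integrals.

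Next I identify the period integrals as shifted Euler integrals: $\Phi_z \phi$ is the Euler integrand for the GKZ parameter $\delta' := \transp{(\gamma - {\bf b}, c + {\bf a})}$ and $\Phi_z^{-1}\psi$ is the one for $\delta'' := \transp{(-\gamma - {\bf b}', -c + {\bf a}')}$. Since $\delta' - \delta$ and $\delta'' + \delta$ lie in $\Z^{(n+k)\times 1}$, the local system $\mathcal{L}_z$ is unchanged and the cycles $\Gamma_{\s,\tilde{\bf k}}$ remain bases; thus Theorem \ref{thm:fundamentalthm} applied to the shifted parameters gives $P_\s = (2\pi\ii)^{n+k}T_\s(\delta')\,\varphi_\s(\delta')$ and $P^\vee_\s = (2\pi\ii)^{n+k}T_\s(\delta'')\,\varphi_\s(\delta'')$, where $\varphi_\s(\mu)$ denotes the column vector $(\varphi_{\s, {\bf k}(j)}(z;\mu))_j$.

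The central computation is the matrix identity
\begin{equation*}
\transp{T_\s(\delta'')}\, I_{\s,h}^{-1}\, T_\s(\delta') = r_\s\, C_\s(\gamma - {\bf b})\, C_\s(-\gamma - {\bf b}') \cdot \bigl[\transp{U_\s}\, D(-\beta')\, U_\s\bigr]\, D_2^{-1}\, \bigl[U_\s^*\, D(-\beta)\, U_\s\bigr],
\end{equation*}
where $\beta = \transp{({\bf b}, -{\bf a})}$, $\beta' = \transp{({\bf b}', -{\bf a}')}$, $D(\mu) := \diag\bigl(e^{2\pi\ii\transp{\tilde{\bf k}} A_\s^{-1}\mu}\bigr)$, and $D_2 := \diag\bigl(H_{A_\s,{\bf k}}(\delta)\bigr)$. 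It follows by substituting $T_\s(\mu) = \sqrt{r_\s}\,C_\s(\cdot)\,D(\mu)\,U_\s$ together with $I_{\s,h}^{-1} = D(\delta)\,U_\s D_2^{-1} U_\s^* D(\delta)^{-1}$ and using $\delta' - \delta = -\beta$, $\delta'' + \delta = -\beta'$. By orthogonality of characters on the finite abelian group $\Z^{(n+k)\times 1}/\Z A_\s$, the two bracketed matrices are the permutation matrices that implement the shifts $[A_{\bs}{\bf k}] \mapsto [A_{\bs}{\bf k} + \beta]$ and $[A_{\bs}{\bf k}] \mapsto [-A_{\bs}{\bf k} + \beta']$; by the very definitions of ${\bf k}_{{\bf a},{\bf b}}$ and ${\bf k}'_{{\bf a}',{\bf b}'}$, these are exactly the reindexings that convert $\varphi_{\s,{\bf k}}(z;\delta')$ and $\varphi_{\s,{\bf k}}(z;\delta'')$ into the $\varphi_{\s, {\bf k}_{{\bf a},{\bf b}}}$ and $\varphi_{\s, {\bf k}'_{{\bf a}',{\bf b}'}}$ on the right of (\ref{eqn:CIF}).

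It remains to simplify the scalar prefactor. The Pochhammer identity $\Gamma(\gamma_l)/\Gamma(\gamma_l - b_l) = (\gamma_l - b_l)_{b_l}$ yields $C_\s(\gamma - {\bf b}) = (-1)^{|{\bf b}|}(\gamma - {\bf b})_{\bf b}\,C_\s(\gamma)$ and analogously for $C_\s(-\gamma - {\bf b}')$; this produces the Pochhammer symbols and the sign $(-1)^{|{\bf b}|+|{\bf b}'|}$ from (\ref{eqn:CIF}). The residue $r_\s^2\,C_\s(\gamma)\,C_\s(-\gamma)/H_{A_\s,{\bf k}}(\delta)$ is then rewritten using Euler's reflection $\Gamma(\gamma_l)\Gamma(-\gamma_l) = -\pi/(\gamma_l\sin\pi\gamma_l)$, which supplies the $\gamma_1\cdots\gamma_k$ and one power of $\pi^k$, together with the identity $1 - e^{\mp 2\pi\ii x} = \pm 2\ii\,e^{\mp\ii\pi x}\sin\pi x$ applied to every factor of $H_{A_\s,{\bf k}}(\delta)$ to expose the denominator $\sin\pi A_\s^{-1}(\delta + A_{\bs}{\bf k})$ and the remaining $\pi^n$. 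The hardest step is the final bookkeeping: gathering all phases $e^{\pm\ii\pi(\cdot)}$ coming from $C_\s(\pm\gamma)$ and from the sine conversion of $H_{A_\s,{\bf k}}(\delta)$, using $\mathrm{sgn}(A,\s)^2 = 1$ and the integrality of ${\bf k}$, and checking that the remaining phase collapses to precisely $(-1)^{|{\bf k}|}$.
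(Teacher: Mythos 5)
Your proposal is essentially the paper's own proof with the intermediate algebra made explicit: both plug the period vectors of Theorem~\ref{thm:fundamentalthm}, the inverse intersection matrix of Theorem~\ref{thm:SigmaIntersectionMatrix1}, and the block decomposition $I_h=\bigoplus_{\s\in T}I_{\s,h}$ into the twisted period relation~(\ref{GeneralQR}) and simplify. What you add is worthwhile: identifying $U_\s^*D(-\beta)U_\s$ and $\transp{U_\s}D(-\beta')U_\s$ as permutation matrices on $\Z^{(n+k)\times 1}/\Z A_\s$ via character orthogonality (which explains cleanly why the shifts ${\bf k}\mapsto{\bf k}_{{\bf a},{\bf b}}$ and ${\bf k}\mapsto{\bf k}'_{{\bf a}',{\bf b}'}$ appear), and factoring $C_\s(\gamma-{\bf b})=(-1)^{|{\bf b}|}(\gamma-{\bf b})_{\bf b}C_\s(\gamma)$ using ${\bf b}\in\Z^{k\times 1}$.

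Two small slips in the prefactor bookkeeping. First, the scalar multiplying the permutation-matrix sandwich is $(2\pi\ii)^{n+2k}\,r_\s\,C_\s(\gamma-{\bf b})\,C_\s(-\gamma-{\bf b}')$, with a single power of $r_\s$ (one $\sqrt{r_\s}$ from each of $T_\s(\delta')$ and $T_\s(\delta'')$), not $r_\s^2$; this matters because the single $r_\s$ combined with the $(\det A_\s)^{-2}$ hidden inside $C_\s(\gamma)C_\s(-\gamma)$ is precisely what produces $1/|\det A_\s|$ in~(\ref{eqn:CIF}), whereas $r_\s^2$ would cancel it entirely. Second, $(-1)^{|{\bf k}|}$ is not the leftover of a phase cancellation: if you work with the representative of $[{\bf k}]$ whose first $k$ entries vanish (so $\sum_{i\in\s^{(l)}}p_{\s i}({\bf k})=0$, as used in \S\ref{subsection:GKZcase}), all phases from $C_\s(\pm\gamma)$ and the sine conversion of $H_{A_\s,{\bf k}}(\delta)$ cancel to $+1$; the $(-1)^{|{\bf k}|}$ in~(\ref{eqn:CIF}) is instead what compensates for switching to the representative $A_{\bs}{\bf k}$, because $\sin\pi A_\s^{-1}(\delta+{\bf k})$ changes by $(-1)^{|{\bf m}|}$ under ${\bf k}\mapsto{\bf k}+A_\s{\bf m}$ and one checks $|{\bf m}|=|{\bf k}|$ in passing between the two choices of representative.
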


\begin{proof}
We compute the left-hand side of (\ref{eqn:CIF}) by means of (\ref{GeneralQR}). Taking a complete system of representatives $\{[\tilde{\bf k}(1)],\dots,[\tilde{\bf k}(r_\s)]\}$ of $\Z^{\s\times 1}/\Z\transp A_\s$, we set 
\begin{equation}
P_\s=\transp{\left( \int_{\Gamma_{\s,\tilde{\bf k}(1)}}\Phi_z x^{\bf a}h^{\bf b}\frac{dx}{x},\dots,\int_{\Gamma_{\s,\tilde{\bf k}(r_\s)}}\Phi_z x^{\bf a}h^{\bf b}\frac{dx}{x}\right)}
\end{equation}
and
\begin{equation}
P^\vee_\s=\transp{\left( \int_{\Gamma^\vee_{\s,\tilde{\bf k}(1)}}\Phi^{-1} x^{\bf a^\prime}h^{\bf b^\prime}\frac{dx}{x},\dots,\int_{\Gamma^\vee_{\s,\tilde{\bf k}(r_\s)}}\Phi^{-1} x^{\bf a^\prime}h^{\bf b^\prime}\frac{dx}{x}\right)}.
\end{equation}

\noindent
The formula (\ref{GeneralQR}) in our setting takes the form
\begin{equation}\label{OurQR}
\langle x^{\bf a}h^{\bf b}\frac{dx}{x},x^{\bf a^\prime}h^{\bf b^\prime}\frac{dx}{x} \rangle_{ch}
=
\sum_{\s\in T}\transp P_\s\transp I_{\s,h}^{-1}P^\vee_\s.
\end{equation}

\noindent
On the other hand, if we take a complete set of representatives $\{ [A_{\bs}{\bf k}(1)],\dots,[A_{\bs}{\bf k}(r_\s)]\}$ of $\Z^{(n+k)\times 1}/\Z A_\s$, we set $\Phi_{\s,1}=\transp{\left(\varphi_{\s,{\bf k}(1)}\left( z;\substack{\gamma-{\bf b}\\ c+{\bf a}}\right),\dots,\varphi_{\s,{\bf k}(r_\s)}\left( z;\substack{\gamma-{\bf b}\\ c+{\bf a}}\right)\right)}$. By Theorem \ref{thm:fundamentalthm}, we have

\begin{align}
&\frac{1}{(2\pi\ii)^{n+k}}P_\s\nonumber\\
=&
\sqrt{r_\s}C_\s(\gamma-{\bf b})\diag\left( 
\exp\left\{
2\pi\ii{}^t\tilde{\bf k}(1)A_\s^{-1}
\begin{pmatrix}
\gamma-{\bf b}\\
c+{\bf a}
\end{pmatrix}
\right\}
,\dots,
\exp\left\{
2\pi\ii{}^t\tilde{\bf k}(r_\s)A_\s^{-1}
\begin{pmatrix}
\gamma-{\bf b}\\
c+{\bf a}
\end{pmatrix}
\right\}\right)
U_{\s}
\Phi_{\s,1}.\label{Integral1}
\end{align}

\noindent
In the same way, setting 
$
\Phi_{\s,2}=
\transp{\left(
\varphi_{\s,{\bf k}(1)}\left( z;\substack{-\gamma-{\bf b^\prime}\\ -c+{\bf a^\prime}}\right)
,\dots,
\varphi_{\s,{\bf k}(r_\s)}\left( z;\substack{-\gamma-{\bf b^\prime}\\ -c+{\bf a^\prime}}\right)
\right)}$, we have
\begin{align}
&\frac{1}{(2\pi\ii)^{n+k}}P_\s^\vee\nonumber\\
=&
\sqrt{r_\s}
C_\s(-\gamma-{\bf b^\prime})\diag\left( 
\exp\left\{
2\pi\ii{}^t\tilde{\bf k}(1)A_\s^{-1}
\begin{pmatrix}
-\gamma-{\bf b^\prime}\\
-c+{\bf a^\prime}
\end{pmatrix}
\right\}
,\dots,
\exp\left\{
2\pi\ii{}^t\tilde{\bf k}(r_\s)A_\s^{-1}
\begin{pmatrix}
-\gamma-{\bf b^\prime}\\
-c+{\bf a^\prime}
\end{pmatrix}
\right\}
\right)
U_{\s}
\Phi_{\s,2}.\label{Integral2}
\end{align}

\noindent
Substituting (\ref{Integral1}), (\ref{Integral2}), and
\begin{equation}
I_{\s,h}^{-1}=\diag\left( e^{2\pi\ii{}^t\tilde{\bf k}A_\s^{-1}\delta}\right)_{\tilde{\bf k}}U_{\s}\diag\left( H_{A_\s,A_{\bs}{\bf k}}(\delta)^{-1}\right)_{\bf k}U^*_{\s}\diag\left( e^{-2\pi\ii{}^t\tilde{\bf k}A_\s^{-1}\delta}\right)_{\tilde{\bf k}}
\end{equation}
to (\ref{OurQR}), we obtain (\ref{eqn:CIF}). Note that $U_\s^{-1}=U_\s^*$.
\end{proof}

\begin{rem}
While the signature $(-1)^{|{\bf k}|}$ may depend on the choice of the representative $[A_{\bs}{\bf k}]\in\Z^{(n+k)\times 1}/\Z A_\s$, the fraction 
$\frac{(-1)^{|{\bf k}|}}{\sin\pi A_\s^{-1}(\delta+A_{\bs}{\bf k})}$ does not. To show this, let us take two integer vectors ${\bf k}_1,{\bf k}_2$ such that $A_{\bs}({\bf k}_1-{\bf k}_2)=A_\s{\bf k}_\s$ for some ${\bf k}_\s$. Since $|{\bf k}_1-{\bf k}_2|=|{\bf k}_\s|$, we have
\begin{equation}
\frac{(-1)^{|{\bf k}_1|}}{\sin\pi A_\s^{-1}(\delta+A_{\bs}{\bf k}_1)}=\frac{(-1)^{|{\bf k}_2|}(-1)^{|{\bf k}_\s|}}{\sin\pi \{A_\s^{-1}(\delta+A_{\bs}{\bf k}_2)+{\bf k}_\s\}}=\frac{(-1)^{|{\bf k}_2|}}{\sin\pi A_\s^{-1}(\delta+A_{\bs}{\bf k}_2)}.
\end{equation}

\end{rem}

\begin{exa}
We consider a $3\times 5$ matrix 
\begin{equation}
A=
\begin{pmatrix}
1&1&1&1&1\\
0&1&0&2&0\\
0&0&1&0&2
\end{pmatrix}
\end{equation}
which corresponds to the Euler integral of the form
\begin{equation}
\int_\Gamma (z_1+z_2x+z_3y+z_4x^2+z_5y^2)^{-\gamma}x^{c_1}y^{c_2}\frac{dxdy}{xy}.
\end{equation}

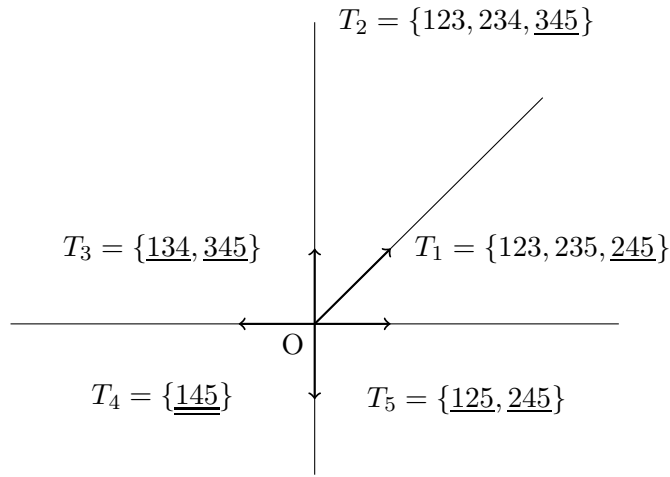
\begin{figure}[H]
\begin{center}
\begin{tikzpicture}
\draw (0,0) node[below left]{O}; 
\draw[thick, ->] (0,0)--(1,0);
\draw[thick, ->] (0,0)--(0,1);
\draw[thick, ->] (0,0)--(1,1);
\draw[thick, ->] (0,0)--(-1,0);
\draw[thick, ->] (0,0)--(0,-1);
\draw[-] (0,0)--(4,0);
\draw[-] (0,0)--(0,4);
\draw[-] (0,0)--(3,3);
\draw[-] (0,0)--(-4,0);
\draw[-] (0,0)--(0,-2);
\node at (3,1){$T_1=\{ 123,235,\underline{245}\}$};
\node at (2,4){$T_2=\{ 123,234,\underline{345}\}$};
\node at (-2,1){$T_3=\{ \underline{134},\underline{345}\}$};
\node at (-2,-1){$T_4=\{ \underline{\underline{145}}\}$};
\node at (2,-1){$T_5=\{ \underline{125},\underline{245}\}$};
\end{tikzpicture}
\caption{Projected image of the secondary fan in $\R^{1\times 2}\simeq L_A^\vee$}
\end{center}
\end{figure}

\noindent
Using the package {\rm ns\_twistedlog.rr}\footnote{http://www.math.kobe-u.ac.jp/OpenXM/Current/doc/asir-contrib/en/ns\_twistedlog-html/ns\_twistedlog-en.html} of Risa/Asir (\cite{risa-asir}), we see that the basis of the cohomology groups $\Homo^2_{dR}\left( V_z;(\mathcal{O}_{V_z},\nabla_x)\right)$ and $\Homo^2_{dR}\left( V_z;(\mathcal{O}_{V_z}^\vee,\nabla_x^\vee)\right)$ is given by $\{ [\frac{dx\wedge dy}{xy}],[\frac{dx\wedge dy}{y}],[\frac{ydx\wedge dy}{x}],[dx\wedge dy]\}$. 
The intersection number $\langle \frac{dx\wedge dy}{xy},\frac{dx\wedge dy}{xy}\rangle_{ch}$ can easily be evaluated by the method of \cite{MatsumotoIntersection} and is equal to 
\begin{equation}
(2\pi\ii)^2\left(\frac{1}{c_1c_2}+\frac{1}{(2\gamma-c_1-c_2)c_2}+\frac{1}{c_1(2\gamma-c_1-c_2)}\right)=(2\pi\ii)^2\frac{2\gamma}{(2\gamma-c_1-c_2)c_1c_2}.
\end{equation}

Let us take the regular triangulation $T_4$. Theorem \ref{TheQuadraticRelation} for the cohomology intersection number $\langle \frac{dx\wedge dy}{xy},\frac{dx\wedge dy}{xy}\rangle_{ch}$ is 
\begin{align}
  &\frac{2\gamma}{(2\gamma-c_1-c_2)c_1c_2}\nonumber\\
=&\frac{\gamma\pi^3}{4}\left\{
\frac{1}{\sin\pi(\frac{2\gamma-c_1-c_2}{2})\sin\pi\frac{c_1}{2}\sin\pi\frac{c_2}{2}}
\varphi_{145,\scalebox{0.6}{$\begin{pmatrix} 0\\ 0\end{pmatrix}$}}(z;\delta)\varphi_{145,\scalebox{0.6}{$\begin{pmatrix} 0\\ 0\end{pmatrix}$}}(z;-\delta)\right.\nonumber\\
&-
\frac{1}{\sin\pi(\frac{2\gamma-c_1-c_2+1}{2})\sin\pi\frac{(c_1+1)}{2}\sin\pi\frac{c_2}{2}}
\varphi_{145,\scalebox{0.6}{$\begin{pmatrix} 1\\ 0\end{pmatrix}$}}(z;\delta)\varphi_{145,\scalebox{0.6}{$\begin{pmatrix} 1\\ 0\end{pmatrix}$}}(z;-\delta)\nonumber\\
&-
\frac{1}{\sin\pi(\frac{2\gamma-c_1-c_2+1}{2})\sin\pi\frac{c_1}{2}\sin\pi\frac{(c_2+1)}{2}}
\varphi_{145,\scalebox{0.6}{$\begin{pmatrix} 0\\ 1\end{pmatrix}$}}(z;\delta)\varphi_{145,\scalebox{0.6}{$\begin{pmatrix} 0\\ 1\end{pmatrix}$}}(z;-\delta)\nonumber\\
&\left.+
\frac{1}{\sin\pi(\frac{2\gamma-c_1-c_2+2}{2})\sin\pi\frac{(c_1+1)}{2}\sin\pi\frac{(c_2+1)}{2}}
\varphi_{145,\scalebox{0.6}{$\begin{pmatrix} 1\\ 1\end{pmatrix}$}}(z;\delta)\varphi_{145,\scalebox{0.6}{$\begin{pmatrix} 1\\ 1\end{pmatrix}$}}(z;-\delta)\right\}.
\end{align}

On the other hand, if we take the regular triangulation $T_1$, we obtain 
\begin{align}
  &\frac{2\gamma}{(2\gamma-c_1-c_2)c_1c_2}\nonumber\\
=&\gamma\pi^3\left\{
\frac{1}{\sin\pi(\gamma-c_1-c_2)\sin\pi c_1\sin\pi c_2}
\varphi_{123,\scalebox{0.6}{$\begin{pmatrix} 0\\ 0\end{pmatrix}$}}(z;\delta)\varphi_{123,\scalebox{0.6}{$\begin{pmatrix} 0\\ 0\end{pmatrix}$}}(z;-\delta)\right.\nonumber\\
&+
\frac{1}{\sin\pi c_1\sin\pi (2\gamma-2c_1-c_2)\sin\pi (c_1+c_2-\gamma)}
\varphi_{235,\scalebox{0.6}{$\begin{pmatrix} 0\\ 0\end{pmatrix}$}}(z;\delta)\varphi_{235,\scalebox{0.6}{$\begin{pmatrix} 0\\ 0\end{pmatrix}$}}(z;-\delta)\nonumber\\
&+
\frac{1}{2\sin\pi(2\gamma-c_1-c_2)\sin\pi \frac{2c_1+c_2-2\gamma}{2}\sin\pi \frac{c_2}{2}}
\varphi_{245,\scalebox{0.6}{$\begin{pmatrix} 0\\ 0\end{pmatrix}$}}(z;\delta)\varphi_{245,\scalebox{0.6}{$\begin{pmatrix} 0\\ 0\end{pmatrix}$}}(z;-\delta)\nonumber\\
&-
\left.
\frac{1}{2\sin\pi(2\gamma-c_1-c_2+1)\sin\pi \frac{2c_1+c_2-2\gamma-1}{2}\sin\pi \frac{(c_2+1)}{2}}
\varphi_{245,\scalebox{0.6}{$\begin{pmatrix} 0\\ 1\end{pmatrix}$}}(z;\delta)\varphi_{245,\scalebox{0.6}{$\begin{pmatrix} 0\\ 1\end{pmatrix}$}}(z;-\delta)\right\}.
\end{align}

The cohomology intersection number $\langle \frac{dx\wedge dy}{z_1+z_2x+z_3y+z_4x^2+z_5y^2}, \frac{dx\wedge dy}{xy}\rangle_{ch}$ is zero. If we take the regular triangulation $T_4$, we obtain
\begin{align}
0=&
\frac{1}{\sin\pi(\frac{2\gamma-c_1-c_2}{2})\sin\pi\frac{c_1}{2}\sin\pi\frac{c_2}{2}}
\varphi_{145,\scalebox{0.6}{$\begin{pmatrix} 1\\ 1\end{pmatrix}$}}(z;\delta+{\bf 1})\varphi_{145,\scalebox{0.6}{$\begin{pmatrix} 0\\ 0\end{pmatrix}$}}(z;-\delta)\nonumber\\
&-
\frac{1}{\sin\pi(\frac{2\gamma-c_1-c_2+1}{2})\sin\pi\frac{(c_1+1)}{2}\sin\pi\frac{c_2}{2}}
\varphi_{145,\scalebox{0.6}{$\begin{pmatrix} 0\\ 1\end{pmatrix}$}}(z;\delta+{\bf 1})\varphi_{145,\scalebox{0.6}{$\begin{pmatrix} 1\\ 0\end{pmatrix}$}}(z;-\delta)\nonumber\\
&-
\frac{1}{\sin\pi(\frac{2\gamma-c_1-c_2+1}{2})\sin\pi\frac{c_1}{2}\sin\pi\frac{(c_2+1)}{2}}
\varphi_{145,\scalebox{0.6}{$\begin{pmatrix} 1\\ 0\end{pmatrix}$}}(z;\delta+{\bf 1})\varphi_{145,\scalebox{0.6}{$\begin{pmatrix} 0\\ 1\end{pmatrix}$}}(z;-\delta)\nonumber\\
&+
\frac{1}{\sin\pi(\frac{2\gamma-c_1-c_2+2}{2})\sin\pi\frac{(c_1+1)}{2}\sin\pi\frac{(c_2+1)}{2}}
\varphi_{145,\scalebox{0.6}{$\begin{pmatrix} 0\\ 0\end{pmatrix}$}}(z;\delta+{\bf 1})\varphi_{145,\scalebox{0.6}{$\begin{pmatrix} 1\\ 1\end{pmatrix}$}}(z;-\delta).
\end{align}

\noindent
Here, we have put
$
\delta+{\bf 1}=
\begin{pmatrix}
\gamma+1\\
c_1+1\\
c_2+1
\end{pmatrix}.
$
\end{exa}

\subsection{Some consequences of Theorem \ref{TheQuadraticRelation}}
We discuss some consequences on the cohomology intersection form derived from Theorem \ref{TheQuadraticRelation}. Namely, we explain that Theorem \ref{TheQuadraticRelation} can be used to generalize \cite[Theorem 3.2 and Theorem 3.6]{MatsubaraTakayama}. 

Let us recall notation of \cite[\S2]{MatsubaraTakayama}. For any field extension $K\subset\C$ of $\Q$, we set $\Gm(K)^n={\rm Spec}\left( K[x_1^\pm,\dots,x_n^\pm]\right)$. We also put $\A^N={\rm Spec}\ \C[z_j^{(l)}]$. We write $E_A(z)$ for the product of principal $A_{\Gamma}$-discriminants for any face $\Gamma$ of ${\rm New}(A)$ (\cite[Chapter 9, Definition 1.2]{GKZbook}). Here, ${\rm New}(A)$ is the convex hull of column vectors of $A$. The complement $U=\A^N\setminus \{ E_A(z)=0\} $ is called the Newton non-degenerate locus (\cite[LEMMA 3.3]{Adolphson}). Since $E_A(z)$ is a  polynomial with coefficients in $\Q$, we can also consider a reduced scheme $U(K)$ defined over $K$ whose base change to $\C$ is isomorphic to $U$. Let us denote by $D_{U(K)}$ the ring of differential operators on $U(K)$. Any element $P$ of $D_{U(K)}$ is a finite sum $P=\frac{1}{E_A(z)^l}\sum_\alpha a_\alpha(z)\partial^\alpha$ where $a_\alpha(z)$ is a polynomial with coefficients in $K$ and $l$ is an integer. We define a reduced divisor $D$ of $\Gm(K)^n\times U(K)$ by $D=\bigcup_{l=1}^k\{ h_{l,z^{(l)}}(x)=0\}$. We define the symbol $\int^0_{\pi}(\mathcal{O}_{X(K)},\nabla)\restriction_{U(K)}$ by
\begin{equation}\label{eqn:GM}
\mathbb{H}^n\left( (\Gm(K))^n_x\times U(K);\left(\cdots\overset{\nabla_x}{\rightarrow}
\Omega^\bullet_{\Gm(K)^n\times U(K)/U(K)}\left(*D\right)\overset{\nabla_x}{\rightarrow}\right)\cdots\right).
\end{equation}
Here, $\Omega^p_{(\Gm(K))^n_x\times U(K)/U(K)}(*D)$ is the sheaf of relative differential $p$-forms with poles along $D$ with respect to the projection $\Gm(K)^n\times U(K)\rightarrow U(K)$.  We can also define the dual object $\int^0_{\pi}(\mathcal{O}_{X(K)},\nabla^\vee)\restriction_{U(K)}$ by replacing $\nabla_x$ by $\nabla_x^\vee$ in (\ref{eqn:GM}). Moreover, for any $[\phi]\in \int^0_{\pi}(\mathcal{O}_{X(K)},\nabla)\restriction_{U(K)}$, we set 
\begin{equation}\label{GMAction}
\nabla^{GM}\phi=d_z\phi-\displaystyle\sum_{j,l}\gamma_l\frac{x^{{\bf a}^{(l)}(j)}}{h_{l,z^{(l)}}(x)}dz_j^{(l)}\wedge\phi.
\end{equation}
Then, $\left(\int^0_{\pi}(\mathcal{O}_{X(K)},\nabla)\restriction_{U(K)},\nabla^{GM}\right)$ is an integrable connection on $U(K)$. We can also define the dual connection $\nabla^{\vee GM}$ by replacing $\gamma_l$ by $-\gamma_l$ in (\ref{GMAction}). When $\delta$ is non-resonant in the sense of \cite[2.9]{GKZEuler} and $\gamma_l\notin\Z$, there is a perfect bilinear pairing
\begin{equation}\label{eqn:RCIF}
\int^0_{\pi}(\mathcal{O}_{X(\C)},\nabla)\restriction_{U}\times\int^0_{\pi}(\mathcal{O}_{X(\C)},\nabla^\vee)\restriction_{U}\rightarrow\mathcal{O}_{U}
\end{equation}
whose stalk at any point $z\in U$ is the cohomology intersection form (\ref{eqn:ACIF}). By abuse of notation, the pairing of (\ref{eqn:RCIF}) is denoted by $\langle\bullet,\bullet\rangle_{ch}$. We define a field extension $\Q(\delta)$ of $\Q$ by $\Q(\delta)=\Q(\gamma_1,\dots,\gamma_k,c_1,\dots,c_n)$. The following statement is a generalization of  \cite[Theorem 3.6]{MatsubaraTakayama}.

\begin{thm}\label{thm:Coefficients}
Suppose that $\delta$ is non-resonant and $\gamma_l\notin\Z$ for any $l=1,\dots,k$. Then, the normalized cohomology intersection pairing $B=\frac{\langle\bullet,\bullet\rangle_{ch}}{(2\pi\ii)^n}$ defines a perfect bilinear pairing 
\begin{equation}
B:\ \int^0_{\pi}(\mathcal{O}_{X(\Q(\delta))},\nabla)\restriction_{U(\Q(\delta))}\times\int^0_{\pi}(\mathcal{O}_{X(\Q(\delta))},\nabla^\vee)\restriction_{U(\Q(\delta))}\rightarrow\mathcal{O}_{U(\Q(\delta))}.
\end{equation}
Moreover, for any local sections $\phi$ of $\int^0_{\pi}(\mathcal{O}_{X(\Q(\delta))},\nabla)\restriction_{U(\Q(\delta))}$ and $\psi$ of $\int^0_{\pi}(\mathcal{O}_{X(\Q(\delta))},\nabla^\vee)\restriction_{U(\Q(\delta))}$, the equality
\begin{equation}\label{Compatibility}
dB(\phi,\psi)=B(\nabla^{GM}\phi,\psi)+B(\phi,\nabla^{\vee GM}\psi)
\end{equation}
holds.
\end{thm}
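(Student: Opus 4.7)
The plan is to deduce all three assertions from the explicit Laurent expansion in Theorem \ref{TheQuadraticRelation}. First I would reduce to pairs of monomial classes $\phi=x^{\bf a}h^{\bf b}\frac{dx}{x}$ and $\psi=x^{{\bf a}'}h^{{\bf b}'}\frac{dx}{x}$, since by the remark after Theorem \ref{thm:Intro} such classes generate both cohomology sheaves over $\mathcal{O}_{U(K)}$, so it is enough to check rationality, perfectness, and the Leibniz rule on these.

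The key step is rationality in $\delta$. On the chamber $W_T$ attached to a regular triangulation $T$, the right-hand side of (\ref{eqn:CIF}) is a convergent series in $z$. Writing out the two $\varphi$-factors with the help of (\ref{seriesphi}), the transcendental prefactors $z_\s^{-A_\s^{-1}(\gamma-{\bf b},c+{\bf a})^t}$ and $z_\s^{-A_\s^{-1}(-\gamma-{\bf b}',-c+{\bf a}')^t}$ multiply to a $z_\s$-power with $\delta$-independent exponent. Using the definitions of ${\bf k}_{{\bf a},{\bf b}}$ and ${\bf k}'_{{\bf a}',{\bf b}'}$ modulo $\Z A_\s$ together with the condition $A_{\bar\s}{\bf m}\in\Z A_\s$ defining $\Lambda_{\bf k}$, a direct computation shows that every term in the resulting double series is an integer-exponent Laurent monomial in $z$. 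For the coefficients, the reflection identity $\pi/\sin\pi s=\Gamma(s)\Gamma(1-s)$ rewrites $\pi^{n+k}/\sin\pi A_\s^{-1}(\delta+A_{\bar\s}{\bf k})$ as a product of Gamma functions whose arguments differ from those of the Gamma denominators inside $\varphi_{\s,{\bf k}_{{\bf a},{\bf b}}}$ and $\varphi_{\s,{\bf k}'_{{\bf a}',{\bf b}'}}$ by integer shifts. The identity $\Gamma(s+n)/\Gamma(s)=(s)_n$ then collapses the resulting quotients of Gammas into finite products of Pochhammer symbols in $\Z$-linear forms in $\delta$, so every coefficient of the Laurent series lies in $\Q(\delta)$. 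Since the intersection number is also a rational function of $z$ on $U$ (cf.\ \cite[Theorem 3.4]{MatsubaraTakayama}) and is uniquely determined by its Laurent expansion on $W_T$, we conclude $B(\phi,\psi)\in\mathcal{O}_{U(\Q(\delta))}$.

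Perfectness over $\Q(\delta)$ is then automatic: the determinant of the matrix of $B$ in any $\Q(\delta)$-rational basis is an element of $\mathcal{O}_{U(\Q(\delta))}$ that is nonzero at every point of $U$ by pointwise perfectness of (\ref{eqn:ACIF}), which follows from Poincar\'e-Verdier duality. For the compatibility (\ref{Compatibility}), differentiating the twisted period relation $I_{ch}=P\,{}^tI_h^{-1}\,{}^tP^\vee$ entry-wise works best: differentiation under the integral sign gives $d_z\int_\gamma\Phi_z\phi=\int_\gamma\Phi_z\nabla^{GM}\phi$ and the analogous identity for $P^\vee$ with $\nabla^{\vee GM}$, while $I_h$ is locally constant in $z$ because homology intersection is topological. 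Applying the Leibniz rule and dividing by $(2\pi\ii)^n$ yields (\ref{Compatibility}).

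The main difficulty is the rationality argument. Although the cancellation between the $\sin$-factor and the denominator Gammas of the $\varphi$-series is elementary term by term, carrying it out uniformly over the indexing group $[A_{\bar\s}{\bf k}]\in\Z^{(n+k)\times 1}/\Z A_\s$, tracking the sign $(-1)^{|{\bf k}|}$, and matching the two $\varphi$-series through the identifications of ${\bf k}_{{\bf a},{\bf b}}$ and ${\bf k}'_{{\bf a}',{\bf b}'}$ modulo $\Z A_\s$, requires careful bookkeeping in the spirit of the unimodular computation of \cite{MatsubaraTakayama}.
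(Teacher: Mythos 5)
Your proposal is correct and reconstructs, in detail, the argument the paper relies on by citation: the paper's proof reduces the statement to \cite[Theorems 3.5 and 3.6]{MatsubaraTakayama}, and your three steps --- rationality of the Laurent coefficients via the reflection identity $\pi/\sin\pi s=\Gamma(s)\Gamma(1-s)$ and Pochhammer cancellation, perfectness from pointwise perfectness over $\C$, and the Leibniz compatibility via differentiating the twisted period relation with $I_h$ locally constant --- are exactly the steps of that cited proof, with the unimodular expansion of \cite[Theorem 8.1]{MatsubaraEuler} replaced by Theorem \ref{TheQuadraticRelation}. The integer-exponent verification through the identifications of ${\bf k}_{\bf a,b}$ and ${\bf k}'_{\bf a',b'}$ modulo $\Z A_\sigma$ together with the $\Lambda_{\bf k}$ condition is precisely the bookkeeping the paper points to, so your approach is essentially the same.
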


\noindent
For the proof of Theorem \ref{thm:Coefficients}, we only need to replace \cite[Theorem 8.1]{MatsubaraEuler} in the proof of \cite[Theorem 3.5, 3.6]{MatsubaraTakayama} by Theorem \ref{TheQuadraticRelation} of this paper. In the same way, we can also extend the algorithm of \cite{MatsubaraTakayama}.

\begin{thm}
Given a matrix $A$ as in (\ref{CayleyConfigu}).
When parameters are non-resonant, $\gamma_l\notin\Z$ and moreover the set of series solutions $\Phi_T$ for some regular triangulation $T$ is linearly independent, 
the intersection matrix of
the twisted cohomology group of the GKZ system associated to the matrix $A$
can be algorithmically determined.
\end{thm}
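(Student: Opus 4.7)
The proof is a direct extension of the algorithmic construction in \cite{MatsubaraTakayama}, obtained by substituting Theorem \ref{TheQuadraticRelation} of the present paper for \cite[Theorem 8.1]{MatsubaraEuler} wherever the latter was invoked. The overall plan is to combine three facts: (i) each entry of the cohomology intersection matrix is a rational function in $z$ with coefficients in $\Q(\delta)$; (ii) the Laurent expansion of each entry associated to $T$ is computable term by term in closed form; and (iii) the denominator of each entry is constrained by the combinatorics of the secondary fan.

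First, I would fix bases of $\int^0_\pi(\mathcal{O}_{X(\Q(\delta))},\nabla)\restriction_{U(\Q(\delta))}$ and of its dual consisting of classes of the form $[x^{\bf a}h^{\bf b}\tfrac{dx}{x}]$, which span the twisted cohomology as noted after Theorem \ref{thm:Intro}. By Theorem \ref{thm:Coefficients}, the entries $B_{ij}$ of the normalized cohomology intersection matrix $B=\tfrac{\langle\bullet,\bullet\rangle_{ch}}{(2\pi\ii)^n}$ are sections of $\mathcal{O}_{U(\Q(\delta))}$, so each $B_{ij}$ is a rational function in $z$ over $\Q(\delta)$, and the pairing is compatible with the Gauss-Manin connection via (\ref{Compatibility}).

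Next, I would invoke \cite[Chap.~6, Corollary 1.8]{GKZbook} together with \cite[Theorem 3.4]{MatsubaraTakayama}: the secondary fan $\mathrm{Fan}(A)$ refines the normal fan of the common denominator of $B$, so the maximal cone $C_T$ singles out a unique Laurent expansion of each $B_{ij}$ whose monomial support lies in an explicitly determined strongly convex cone. Theorem \ref{TheQuadraticRelation} then provides this expansion in closed form: each coefficient is a finite sum of explicitly computable products of $\Gamma$-series coefficients, sine values, and Pochhammer symbols. The hypotheses of that theorem are met because non-resonance together with $\gamma_l\notin\Z$ and linear independence of $\Phi_T$ ensure that $\delta$ and its small integer shifts $(\gamma-{\bf b},c+{\bf a})$ and $(-\gamma-{\bf b}',-c+{\bf a}')$ are very generic with respect to $T$.

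The algorithm then proceeds as in \cite{MatsubaraTakayama}: choose a candidate denominator whose Newton polytope is supported on the refinement obtained from $\mathrm{Fan}(A)$; clear denominators in the identity $B_{ij}\cdot(\text{denom})=(\text{num})$; and match enough Laurent coefficients produced by Theorem \ref{TheQuadraticRelation} to solve the resulting finite linear system uniquely for the unknown numerator coefficients, using (\ref{Compatibility}) to cut down the number of independent computations. The main obstacle is an effective bound on the truncation order needed for termination. This bound depends only on the combinatorics of $A$ and the secondary fan, and the argument of \cite{MatsubaraTakayama} supplying such a bound carries over verbatim to the general (possibly non-unimodular) setting, since it uses only the fan structure and the availability of a convergent Laurent expansion of the form now guaranteed by Theorem \ref{TheQuadraticRelation}.
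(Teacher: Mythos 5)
Your proof takes essentially the same approach as the paper's, which disposes of this theorem in one sentence: replace the earlier (unimodular) version \cite[Theorem 8.1]{MatsubaraEuler} in the algorithm of \cite{MatsubaraTakayama} by Theorem \ref{TheQuadraticRelation}. You spell out the algorithmic steps (candidate denominator from the secondary fan, matching Laurent coefficients, compatibility with $\nabla^{GM}$) in more detail than the paper, but the substance and the key ingredients (Theorem \ref{thm:Coefficients}, Theorem \ref{TheQuadraticRelation}, and \cite[Theorem 3.4]{MatsubaraTakayama}) are exactly those the paper intends.
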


\subsection{A conjecture of F. Beukers and C. Verschoor}\label{subsec:BV}

Let us consider a $d\times n$ ($d<n$) integer matrix $A$ and a parameter vector $\delta=\transp{(\delta_1,\dots,\delta_d)}\in\R^{d\times 1}$. We assume that $A$ is homogeneous, i.e., all the column vectors of $A$ lie on a hyperplane that does not go through the origin. We fix a base point $z\in U$. The purpose of this subsection is to show the following theorem conjectured by F. Beukers and C. Verschoor, which turns out to be a corollary of Theorem \ref{thm:SigmaIntersectionMatrix1}.
\begin{thm}\label{thm:BV}
Assume that $\delta$ is very generic with respect to a regular triangulation $T$. Then, the signature of the monodromy invariant hermitian form of $\sol_{M_A(\delta),z}$ is given by those of
\begin{equation}\label{eqn:BV}
\sin\pi A^{-1}_{\s}(\delta+{\bf k})=\prod_{i\in\s}\sin\pi p_{\s i}(\delta+{\bf k})
\end{equation}
where $\s$ runs over $T$ and $[{\bf k}]$ runs over a complete system of representatives of a finite abelian group $\Z^{d\times 1}/\Z A_{\s}$.
\end{thm}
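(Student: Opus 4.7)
The plan is to deduce \cref{thm:BV} directly from the closed form of the homology intersection matrix in \cref{thm:SigmaIntersectionMatrix1}. For real $\delta$, the complex conjugate local system $\overline{\mathcal{L}_z}$ is canonically isomorphic to $\mathcal{L}_z^{\vee}$, which yields an antilinear isomorphism $c_{*}:\Homo_n(V_z^{an};\mathcal{L}_z)\to\Homo_n(V_z^{an};\mathcal{L}_z^{\vee})$. Under the period isomorphism \cref{eqn:Integration}, the monodromy invariant hermitian form on $\sol_{M_A(\delta),z}$ corresponds, up to a positive scalar, to the sesquilinear pairing $h(\gamma,\gamma'):=\langle\gamma,c_{*}\gamma'\rangle_h$ on the twisted homology group. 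Because the cycles $\Gamma_{\s,\tilde{\bf k}}$ and $\check\Gamma_{\s,\tilde{\bf k}}$ are built by the same geometric recipe in \cite[\S 6]{MatsubaraEuler} with the sole substitution $\delta\leftrightarrow-\delta$, the map $c_{*}$ is diagonal in these two bases with unit-modulus entries. Consequently the matrix of $h$ in the basis $\{\Gamma_{\s,\tilde{\bf k}}\}$ is a unitary right-rescaling of $I_h$; being hermitian by construction, it has the same signature as the diagonalisation of $I_h$ furnished by \cref{thm:SigmaIntersectionMatrix1}.

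Now $I_h=\bigoplus_{\s\in T}I_{\s,h}$ and, since $\delta$ is real, each block
\[I_{\s,h}=(D_\s U_\s)\,\diag\bigl(H_{A_\s,{\bf k}}(\delta)\bigr)\,(D_\s U_\s)^{*},\qquad D_\s=\diag\bigl(e^{2\pi\ii\transp{\tilde{\bf k}}A_\s^{-1}\delta}\bigr),\]
is unitarily conjugate to $\diag(H_{A_\s,{\bf k}}(\delta))$. Applying the elementary identity $1-e^{-2\pi\ii\alpha}=2\ii e^{-\ii\pi\alpha}\sin\pi\alpha$ (together with its analogue for $1-e^{2\pi\ii\gamma_l}$) inside the definition \cref{eq:Eigen1} of $H_{A_\s,{\bf k}}(\delta)$, one obtains a factorisation
\[H_{A_\s,{\bf k}}(\delta)=c_\s\cdot e^{\ii\theta_{\s,{\bf k}}}\cdot\prod_{i\in\s}\sin\pi p_{\s i}(\delta+{\bf k}),\]
where $c_\s$ is a nonzero constant independent of ${\bf k}$ and $\theta_{\s,{\bf k}}\in\R$. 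After absorbing the unit-modulus phases $e^{\ii\theta_{\s,{\bf k}}}$ into a further diagonal unitary change of basis, the signature of the hermitian form attached to $\diag(H_{A_\s,{\bf k}}(\delta))$ coincides, up to the fixed overall sign $\mathrm{sgn}(c_\s)$, with that of the real diagonal matrix $\diag\bigl(\prod_{i\in\s}\sin\pi p_{\s i}(\delta+{\bf k})\bigr)_{[{\bf k}]\in\Z^{d\times 1}/\Z A_\s}$.

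For a general homogeneous $A$, a preliminary $\mathrm{GL}(\Z)$ change of rows (which merely shifts $\delta$ by an invertible linear map) reduces the problem to the Cayley form of \cref{CayleyConfigu} with $k=1$, so \cref{thm:SigmaIntersectionMatrix1} is directly applicable. Summing the signatures over $\s\in T$ and $[{\bf k}]\in\Z^{d\times 1}/\Z A_\s$ then yields \cref{thm:BV}. The principal obstacle is the careful bookkeeping of unit-modulus phases and signs: one must verify rigorously that $c_{*}$ is diagonal with unit-modulus entries in the bases of \cite[\S 6]{MatsubaraEuler}, and that after all phases and positive constants have been absorbed into unitary basis changes, the overall sign $\mathrm{sgn}(c_\s)$ is uniform across $\s\in T$, so that the signature one reads off is precisely the Beukers-Verschoor signature rather than its negation.
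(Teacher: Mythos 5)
Your proposal follows the same route as the paper: both deduce the theorem from the spectral decomposition $I_{\s,h}=V\diag\bigl(H_{A_\s,{\bf k}}(\delta)\bigr)V^*$ with $V=\diag\bigl(e^{2\pi\ii\transp{\tilde{\bf k}}A_\s^{-1}\delta}\bigr)U_\s$ given by \cref{thm:SigmaIntersectionMatrix1} together with the factorization $1-e^{\pm 2\pi\ii\alpha}=\mp 2\ii e^{\pm\ii\pi\alpha}\sin\pi\alpha$. (The paper realizes the same unitary diagonalization by passing to the $\Gamma$-series basis $\Phi_\s$ via the transition matrix $T_\s$ of \cref{thm:fundamentalthm}, where the Gram matrix of the hermitian form is manifestly diagonal with entries (\ref{eqn:DE}).) However, two of your steps do not hold as written, and one case is missing.

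First, the relation between $\check\Gamma_{\s,\tilde{\bf k}}$ and $\mathrm{iden}\circ\mathrm{conj}(\Gamma_{\s,\tilde{\bf k}})$ is not "diagonal with unit-modulus entries". The paper's key observation, which is the geometric input from the construction in \cite[\S 6]{MatsubaraEuler}, is that $\mathrm{iden}\circ\mathrm{conj}([\Gamma_{\s,\tilde{\bf k}}])$ equals $[\check\Gamma_{\s,\tilde{\bf k}}]$ up to one and the same real constant $C\in K\cap\R$, independent of both $\s$ and $\tilde{\bf k}$. Uniformity and reality are exactly what make the later signature argument close. If $c_*$ genuinely introduced a non-constant diagonal unitary $D$, the Gram matrix of the hermitian form would be $I_hD=V\diag(H)V^*D$, which is not unitarily conjugate to $\diag(H)$, so the inference "being hermitian by construction, it has the same signature as the diagonalisation of $I_h$" would fail. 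You need the constant to be scalar and real before you can invoke the spectral decomposition.

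Second, the phase-absorption step is vacuous: conjugating a diagonal matrix by a diagonal unitary returns the same diagonal matrix, so no "further diagonal unitary change of basis" can remove the complex phases of $\diag\bigl(H_{A_\s,{\bf k}}(\delta)\bigr)$. The correct observation is that no absorption is needed once the first point is fixed: because $(\ii)^n C\,I_{\s,h}$ is hermitian by construction and $V$ is unitary for real $\delta$, the matrix $(\ii)^n C\,\diag\bigl(H_{A_\s,{\bf k}}(\delta)\bigr)$ is \emph{automatically} real and diagonal. One must then check, using the $\sin$-factorization together with the identity $\gamma+|{\bf k}|=\sum_{i\in\s}p_{\s i}(\delta+A_{\bs}{\bf k})$, that its entries are a \emph{uniform} real constant (same sign for all $\s$ and ${\bf k}$) times $\sin\pi A_\s^{-1}(\delta+A_{\bs}{\bf k})$; this is what the paper's formula (\ref{eqn:DE}) records, and your proposal flags this as the "principal obstacle" without carrying it out.

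Third, you do not treat the case $\gamma\in\Z$ with $\delta$ still very generic. Here the constants $C_\s(\gamma)$ and $\Gamma(\gamma)/\Gamma(1-\gamma)$ degenerate, so the regularized-cycle basis and (\ref{eqn:DE}) cannot be used directly. The paper handles this by a separate holomorphy argument: the monodromy matrices $M_\rho(\delta)$ depend holomorphically on $\delta$ on the very generic locus, the invariance ${}^tM_\rho(\delta)I_{T,h}(\delta)\overline{M_\rho(\delta)}=I_{T,h}(\delta)$ persists after multiplication by a scalar function $f(\delta)$ cancelling the degeneration, and the signature formula then extends by continuity.
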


Although it seems it is well-known that the solution space of GKZ system $M_A(\delta)$ with non-resonant real parameter $\delta$ admits a monodromy invariant hermitian form, we give a proof of this basic fact. Since $A$ is homogeneous, we may assume that $A$ is of the form (\ref{CayleyConfigu}) with $k=1$ after a suitable change of a basis of $\Z^{d\times 1}$. Therefore, we change the notation and keep using that of preceding subsections. For example, we write $\delta=\transp{(\gamma,c_1,\dots,c_n)}$ with $\gamma\in\R$, $c\in\R^{n\times 1}$ and $n=d-1$. We assume that $\delta$ is non-resonant and $\gamma\notin\Z$. We write $\pi:V:=(\Gm)^n\times U\setminus D\rightarrow U$ for the composition of the inclusion $V\hookrightarrow (\Gm)^n\times U$ and the canonical projection $(\Gm)^n\times U\rightarrow U$. We define an integrable connection $\nabla$ by $\nabla=d-\gamma\frac{dh_1}{h_1}\wedge+\sum_{i=1}^nc_i\frac{dx_i}{x_i}\wedge:\mathcal{O}_{V}\rightarrow\Omega^1_{V}$. Here, $d$ stands for the exterior derivative on $V$. Let $\mathcal{L}$ be the dual local system of the sheaf of flat sections of $\nabla^{an}$. In order to simplify the notation, let us put $K:=\Q(e^{2\pi\ii\gamma},e^{2\pi\ii c_1},\dots,e^{2\pi\ii c_n})$. Note that $K$ is closed under complex conjugate. Then, it is easy to see that $\mathcal{L}$ is defined over $K$ and its restriction to any fiber $\pi^{-1}(z)$ is identical to $\mathcal{L}_z$. By fixing a base point $(x,z)\in V$ and a basis $\Phi_{(x,z)}\in \mathcal{L}_{(x,z)}$, we identify the local system $\mathcal{L}$ with its associated $\pi_1(V^{an},(x,z))$-representation on the vector space $\mathcal{L}_{(x,z)}$. We write $\bar{\mathcal{L}}$ for the complex conjugate $K$-local system of $\mathcal{L}$. Note that we have a natural $\underline{\Q}_{V^{an}}$-linear isomorphism ${\rm conj}:\mathcal{L}\rightarrow\bar{\mathcal{L}}$. We define a $\pi_1(V^{an},(x,z))$-invariant unitary metric $(\bullet,\bullet):\mathcal{L}\times\bar{\mathcal{L}}\rightarrow\underline{K}_{V^{an}}$ by the formula $(a\Phi_{(x,z)},\overline{b\Phi_{(x,z)}}):=a\bar{b}$ for any $a,b\in K$. We naturally obtain an induced isomorphism ${\rm iden}:\bar{\mathcal{L}}\rightarrow\mathcal{L}^\vee$ defined by $\langle {\rm iden}({\rm conj}(v)),w\rangle=(w,{\rm conj}(v))$ for any $v,w\in\mathcal{L}$. Here, $\langle\bullet,\bullet\rangle:\mathcal{L}\times\mathcal{L}^\vee\rightarrow \underline{K}_{V^{an}}$ is the natural duality pairing. Applying the functor $R^n\pi^{an}_{!}$ to these isomorphisms, we obtain isomorphisms 
\begin{equation}
\xymatrix{
R^n\pi^{an}_{!}\mathcal{L}\ar[r]^-{\rm conj} &R^n\pi^{an}_{!}\bar{\mathcal{L}}\ar[r]^-{\rm iden} \ar[d]&R^n\pi^{an}_{!}\mathcal{L}^\vee\\
 &\overline{R^n\pi^{an}_{!}\mathcal{L}}&
}.
\end{equation}
Note that $R^n\pi^{an}_{!}\mathcal{L}$ is a local system on $U^{an}$. In particular, for any choice of a local frame $\{ \Gamma_1,\dots,\Gamma_r\}$ of $R^n\pi^{an}_{!}\mathcal{L}$, we can construct a local frame $\{ \bar{\Gamma}_1,\dots,\bar{\Gamma}_r\}$ of $R^n\pi^{an}_{!}\mathcal{L}^\vee$ so that $\bar{\Gamma}_i={\rm iden}\circ{\rm conj}(\Gamma_i)$. By using the isomorphism $\left( R^n\pi_{!}\mathcal{L}\right)_z\simeq \Homo_{c}^n\left( V_z^{an};\mathcal{L}_z\right)\simeq\Homo_n\left( V_z^{an};\mathcal{L}_z\right)$, we have the induced $\Q$-linear map ${\rm iden}\circ{\rm conj}:\Homo_n\left( V_z^{an};\mathcal{L}_z\right)\rightarrow\Homo_n\left( V_z^{an};\mathcal{L}^\vee_z\right)$. We set $\Gamma=(\Gamma_1,\dots,\Gamma_r)$. For any $\rho\in\pi_1(U^{an},z)$ in $U^{an}$, we define an $r\times r$ $K$-matrix $M^\Gamma_\rho$ by the identity $\rho_*\Gamma:=(\rho_*\Gamma_1,\dots,\rho_*\Gamma_r)=\Gamma M^\Gamma_\rho$ where $\rho_*\Gamma_i$ is the parallel transport of $\Gamma_i$ along $\rho$. Since ${\rm iden}\circ{\rm conj}$ is a morphism of sheaves, we have ${\rm iden}\circ{\rm conj}\left(\rho_*(\Gamma)\right)=\rho_*\left( {\rm iden}\circ{\rm conj}(\Gamma)\right)$. The left-hand side of this identity gives $\bar{\Gamma} \overline{M^\Gamma_\rho}$ while the right-hand side gives $\bar{\Gamma}M^{\bar{\Gamma}}_\rho$ where $\bar{\Gamma}=(\bar{\Gamma}_1,\dots,\bar{\Gamma}_r)$.  Thus, we obtain $\overline{M^\Gamma_\rho}=M^{\bar{\Gamma}}_\rho$.

In view of \cite[2.15 THEOREM]{GKZEuler} (see also \cite[Theorem 2.12 and Remark 2.13]{MatsubaraEuler}), we have a canonical isomorphism $R^d\pi_{!}\mathcal{L}^\vee\tilde{\rightarrow}R^d\pi_{*}\mathcal{L}^\vee$. We can define a $K$-sesquilinear pairing $(\bullet,\bullet)_h$ which is monodromy invariant and perfect by
\begin{equation}\label{eqn:2.43}
\begin{array}{cccc}
(\bullet,\bullet)_h :&\Homo_n\left( V_z^{an};\mathcal{L}_z\right)\times \Homo_n\left( V_z^{an};\mathcal{L}_z\right)&\rightarrow&K\\
&\rotatebox{90}{$\in$}&&\rotatebox{90}{$\in$}\\
&([\Gamma_1],[\Gamma_2])&\mapsto&\langle[\Gamma_1],{\rm iden}\circ{\rm conj}([\Gamma_2])\rangle_h.
\end{array}
\end{equation}
By definition, we can find a non-zero constant $C\in K\cap\R$ such that the intersection number $([\Gamma_1],[\Gamma_2])_h$ of two cycles $\Gamma_1=\sum a_j \Delta_j\otimes\Phi_z\restriction_{\Delta_j}$ and $\Gamma_2=\sum a_j^\prime \Delta_j^\prime\otimes\Phi_z\restriction_{\Delta_j^\prime}$ is given by
\begin{equation}
\sum_{j,k}a_j\overline{a_k^\prime}I_{loc}(\Delta_j,\Delta_k^\prime)\frac{\Phi_z\restriction_{\Delta_j}\bar{\Phi}_z\restriction_{\Delta_k^\prime}}{|\Phi_z|^2}
\end{equation}
multiplied by $C$. Here, $I_{loc}$ is the local intersection multiplicity. Note that the ratio $\frac{\Phi_z\restriction_{\Delta_j}\bar{\Phi}_z\restriction_{\Delta_k^\prime}}{|\Phi_z|^2}$ belongs to $K$. By this formula, we can observe that the formula $([\Gamma_1],[\Gamma_2])_h=(-1)^n\overline{([\Gamma_1],[\Gamma_2])_h}$ holds. We set $I_h=(([\Gamma_i],[\Gamma_j])_h)$. Since (\ref{eqn:2.43}) is a stalk of a morphism $(\bullet,\bullet)_h:R^n\pi^{an}_!\mathcal{L}\otimes_K\overline{R^n\pi^{an}_!\mathcal{L}}\rightarrow \underline{K}_{U^{an}}$, we have
\begin{equation}
I_h=\transp{M}^\Gamma_\rho I_hM_\rho^{\bar{\Gamma}}=\transp{M}^\Gamma_\rho I_h\overline{M_\rho^{\Gamma}}
\end{equation}
for any $\rho\in\pi_1(U^{an},z)$. In sum, we obtain a well-known

\begin{prop}
Suppose that $\delta$ is non-resonant, real and $\gamma\notin\Z$. Then, $(\ii)^n(\bullet,\bullet)_h$ defines a monodromy invariant non-degenerate Hermitian form on $\Homo_n\left( V_z^{an};\mathcal{L}_z\right)$ with values in $(\ii)^nK$.
\end{prop}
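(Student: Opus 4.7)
The proof is essentially an assembly of the ingredients built in the paragraph immediately preceding the statement, so my plan is simply to verify the three defining conditions in turn: sesquilinearity, Hermitian symmetry (after multiplying by $(\ii)^n$), monodromy invariance, and non-degeneracy, each on the space $\Homo_n(V_z^{an};\mathcal{L}_z)$.

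\emph{Sesquilinearity and values in $(\ii)^nK$} are immediate. The map $\mathrm{iden}\circ\mathrm{conj}\colon\mathcal{L}\to\mathcal{L}^\vee$ is $\Q$-linear but anti-$K$-linear by construction, so composing $\langle\bullet,\bullet\rangle_h$ with $\mathrm{iden}\circ\mathrm{conj}$ in the second slot yields a $K$-sesquilinear pairing, and the explicit local formula quoted above shows the values lie in $K$. Multiplying by $(\ii)^n$ then gives values in $(\ii)^nK$.

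\emph{The Hermitian property} reduces to a short algebraic manipulation of the sign identity $([\Gamma_1],[\Gamma_2])_h=(-1)^n\overline{([\Gamma_2],[\Gamma_1])_h}$ recorded just above (which is the standard $(-1)^n$-symmetry of the topological intersection of middle-dimensional cycles, visible from the local intersection formula in terms of $I_{loc}$ and the ratio $\Phi_z|_{\Delta_j}\bar\Phi_z|_{\Delta_k^\prime}/|\Phi_z|^2$). Using $\overline{(\ii)^n}=(-\ii)^n=(-1)^n(\ii)^n$ one computes
\begin{equation*}
\overline{(\ii)^n([\Gamma_2],[\Gamma_1])_h}=(-1)^n(\ii)^n\overline{([\Gamma_2],[\Gamma_1])_h}=(-1)^n(\ii)^n\cdot(-1)^n([\Gamma_1],[\Gamma_2])_h=(\ii)^n([\Gamma_1],[\Gamma_2])_h,
\end{equation*}
which is the Hermitian identity.

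\emph{Monodromy invariance} is exactly the relation $I_h=\transp{M}^\Gamma_\rho I_h\,\overline{M^\Gamma_\rho}$ already derived from naturality of $\mathrm{iden}\circ\mathrm{conj}$, applied to the sheaf-level pairing $R^n\pi^{an}_!\mathcal{L}\otimes_K\overline{R^n\pi^{an}_!\mathcal{L}}\to\underline{K}_{U^{an}}$, whose stalk at $z$ is $(\bullet,\bullet)_h$. \emph{Non-degeneracy} finally follows by combining two facts already in hand: the standard homology intersection pairing $\langle\bullet,\bullet\rangle_h\colon \Homo_n(V_z^{an};\mathcal{L}_z)\times \Homo_n(V_z^{an};\mathcal{L}^\vee_z)\to\C$ is perfect (this is where the hypothesis that $\delta$ be non-resonant and $\gamma\notin\Z$ enters, via the isomorphism $R^n\pi_!\mathcal{L}^\vee\tilde{\to}R^n\pi_*\mathcal{L}^\vee$ of \cite[2.15]{GKZEuler}), and $\mathrm{iden}\circ\mathrm{conj}\colon \Homo_n(V_z^{an};\mathcal{L}_z)\to \Homo_n(V_z^{an};\mathcal{L}^\vee_z)$ is a $\Q$-linear isomorphism (being induced from a sheaf isomorphism over $\underline{\Q}_{V^{an}}$). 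Their composition $(\bullet,\bullet)_h$ is therefore perfect, hence non-degenerate after multiplying by $(\ii)^n$.

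The only step requiring any care is the sign identity used in the Hermitian computation; the main possible obstacle there is checking the orientation convention implicit in the local intersection number $I_{loc}$, but this is classical and is precisely what produces the factor $(-1)^n$ when the roles of $\Gamma_1$ and $\Gamma_2$ are swapped. Once that is in place, everything else is pure bookkeeping, and there is no use of Theorem~\ref{thm:SigmaIntersectionMatrix1} in this particular proposition—that theorem will enter only in the subsequent deduction of Theorem~\ref{thm:BV}.
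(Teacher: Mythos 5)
Your proof is correct and follows the same assembly-of-ingredients route as the paper; the only substantive observation is that you have silently corrected a typo in the paper's sign identity. The text as printed asserts $([\Gamma_1],[\Gamma_2])_h=(-1)^n\overline{([\Gamma_1],[\Gamma_2])_h}$, which taken literally only constrains individual values (putting them in $(\ii)^n\R$) and would not give Hermitian symmetry by itself; the relation actually implied by the local formula and the $(-1)^{n^2}$-symmetry of $I_{loc}$ under swapping the two chains is $([\Gamma_1],[\Gamma_2])_h=(-1)^n\overline{([\Gamma_2],[\Gamma_1])_h}$, exactly as you wrote, and your subsequent computation with $\overline{(\ii)^n}=(-1)^n(\ii)^n$ is then correct. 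The remaining points (anti-$K$-linearity of $\mathrm{iden}\circ\mathrm{conj}$ giving sesquilinearity and $K$-valuedness, the monodromy relation $I_h={}^tM^\Gamma_\rho I_h\overline{M_\rho^\Gamma}$, and non-degeneracy from perfectness of $\langle\bullet,\bullet\rangle_h$ plus invertibility of $\mathrm{iden}\circ\mathrm{conj}$) match the paper's discussion, and you are right that Theorem~\ref{thm:SigmaIntersectionMatrix1} is not used at this stage.
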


\begin{rem}
The irreducibility of $R^n\pi^{an}_!\mathcal{L}$ ensures the uniqueness of the monodromy invariant non-degenerate Hermitian form up to a non-zero scalar multiplication.
\end{rem}

\begin{rem}
As was pointed out in \cite[Remark 2.13]{MatsubaraEuler} the canonical morphism $\Homo_n(V_z^{an};\mathcal{L}_z)\rightarrow\Homo_n^{lf}(V_z^{an};\mathcal{L}_z)$ may fail to be an isomorphism when $\gamma\in\Z$ even if $\delta$ is non-resonant. 
\end{rem}

\noindent
\begin{proof}[Proof of Theorem \ref{thm:BV}]
Let us first discuss the case when $\delta$ is very generic and $\gamma\notin\Z$. By our construction of the twisted cycles $\Gamma_{\s,\tilde{\bf k}}$ and $\check{\Gamma}_{\s,\tilde{\bf k}}$ in \S\ref{section:Lifts} or in \cite[\S6]{MatsubaraEuler}\footnote{The construction of $\check{\Gamma}_{\s,\tilde{\bf k}}$ is to replace $\delta$ by $-\delta$ in the construction of $\Gamma_{\s,\tilde{\bf k}}$.}, it is easy to see that $[\check{\Gamma}_{\s,\tilde{\bf k}}]$ is equal to ${\rm iden}\circ{\rm conj}\left([\Gamma_{\s,\tilde{\bf k}}]\right)$ up to constant multiplication by $C\in K\cap\R$ which depends on neither $\s$ nor $\tilde{\bf k}$. We fix a regular triangulation $T$ and suppose that the parameter $\delta$ is very generic with respect to $T$. Let us fix a simplex $\s\in T$ and take complete systems of representatives $\{ [A_{\bs}{\bf k}(i)]\}_{i=1}^{r_\s}$ of $\Z^{(n+1)\times 1}/\Z A_\s$ and $\{ [\tilde{\bf k}(i)]\}_{i=1}^{r_\s}$ of $\Z^{\s\times 1}/\Z \transp{A_\s}$. We define row vectors $\Gamma_\s$ and $\Phi_\s$ by $\Gamma_\s:=([\Gamma_{\s,\tilde{\bf k}(1)}],\dots,[\Gamma_{\s,\tilde{\bf k}(r_\s)}])$ and $\Phi_\s:=(\varphi_{\s,{\bf k}(1)}(z;\delta),\dots,\varphi_{\s,{\bf k}(r_\s)}(z;\delta))$. Under the identification $\Homo_n(V_z^{an};\mathcal{L}_z)\simeq\sol_{M_A(\delta),z}$, we have a relation $\Gamma_\s=\sqrt{r_\s}C_\s(\gamma)\Phi_\s\transp{U_\s}\diag\Big( \exp\left\{
2\pi\ii\transp{
\tilde{\bf k}(i)
}
A_{\s}^{-1}\delta
\right\}\Big)_{i=1}^{r_\s}$. Note that $C_\s(\gamma)=\frac{\text{sgn} (A,\s)e^{-\pi\ii(1-\gamma)}}{\det A_\s \Gamma(\gamma)}$ in our setting. Now it is easy to see that the matrix $\Big(\left( \varphi_{\s,{\bf k}(i)}(z;\delta),\varphi_{\s,{\bf k}(j)}(z;\delta)\right)_h\Big)_{i,j}^{r_\s}$ is a diagonal matrix of which diagonal entries are 
\begin{equation}\label{eqn:DE}
-\pi\frac{\Gamma(\gamma)}{\Gamma(1-\gamma)}\times r_\s(2\ii)^{n+2}\sin\pi A_\s^{-1}(\delta+A_{\bs}{\bf k}(j))
\end{equation}
in view of Theorem \ref{thm:SigmaIntersectionMatrix1}. Note that we used an identity $\gamma+|{\bf k}|=\sum_{i=1}^{r_\s}p_{\s i}(\delta+A_{\bs}{\bf k})$. Thus, if we take $\Phi_T:=\cup_{\s\in T}\Phi_\s$ as a basis of $\Homo_n(V_z^{an};\mathcal{L}_z)$, the Gram matrix $I_{T,h}$ of $(\bullet,\bullet)_h$ is a diagonal matrix with diagonal entries (\ref{eqn:DE}), hence the theorem is proved when $\gamma\notin\Z$.

We discuss the case when $\delta$ is still very generic but $\gamma\in\Z$. Let $\rho\in\pi_1(U^{an},z)$ and $M_\rho(\delta)$ be the monodromy matrix with respect to the basis $\Phi_T$. We claim that $M_\rho(\delta)$ is a holomorphic function of $\delta$ when $\delta$ is very generic. Indeed, when $\delta=\delta_0$ and $\delta_0$ is still very generic, we can take a finite set $\{ [\partial^\alpha]\}_\alpha$ as a $\C(z)$-basis of $\C(z)\otimes_{\C[z]}M_A(\delta_0)$ (\cite[\S6.1]{Takayama}). Writing $\Phi_T=(\varphi_1(z;\delta),\dots,\varphi_r(z;\delta))$, we consider a matrix $\Psi(z;\delta):=\left( \partial^\alpha\varphi_i(z;\delta)\right)_{\alpha,i}$. By construction, we have $\det \Psi (z;\delta_0)\neq 0$. Since each entry of $\Psi(z;\delta)$ is a holomorphic function of $\delta$, the inverse matrix $\Psi(z;\delta)^{-1}$ is also a holomorphic function near $\delta=\delta_0$. Thus, the equality $M_\rho(\delta)=\Psi(z;\delta)^{-1}\rho_*\Psi(z;\delta)$ shows that $M_\rho(\delta)$ is holomorphic near $\delta=\delta_0$. 

Now we claim that the monodromy invariant hermitian form does exist when $\delta$ is very generic. By the monodromy invariance of the sesquilinear form $(\bullet,\bullet)_h$, we see that the formula ${}^tM_\rho(\delta)I_{T,h}(\delta)\overline{M_\rho(\delta)}=I_{T,h}(\delta)$ is true for any $\delta$ very generic and $\gamma\notin\Z$. Thus, we have ${}^tM_\rho(\delta)f(\delta)I_{T,h}(\delta)\overline{M_\rho(\delta)}=f(\delta)I_{T,h}(\delta)$ for any function $f(\delta)$. Taking into account that $M_\rho(\delta)$ depends holomorphically on $\delta$ and the formula (\ref{eqn:DE}), we can conclude that the monodromy invariant hermitian form exists when $\delta$ is very generic of which signature is given by those of (\ref{eqn:BV}). 
\end{proof}

\begin{exa}
We only mention an example which comes from the period integral of a family of K3 surfaces studied in \cite{MSY}. Let ${\bf e}_1,\dots,{\bf e}_5$ be the standard basis of $\Z^{5\times 1}$. We set $A=({\bf e}_i+{\bf e}_j)_{i=1,2,3\ j=4,5}$ and $\delta=\transp{(1/2,1/2,1/2,-1/2,-1/2)}$. The corresponding GKZ system $M_A(\delta)$ is referred to as Gra\ss man hypergeometric system $E(3,6)$. It is not difficult to see that the period integral discussed in \cite{MSY} corresponds to $E(3,6)$. Theorem \ref{thm:BV} shows that the signature of this system is $(4,2)$ which is in concordance with the signature of the transcendental lattice (\cite[Propositions 2.1.5, 2.2.1, 2.3.1]{MSY}). We computed the signature by employing a computer algebra system Risa/Asir (\cite{risa-asir}).
\end{exa}

\section{Lifts of a regularized cycle and their intersection numbers}\label{section:Lifts}
In this section, we give a proof of Theorem \ref{thm:SigmaIntersectionMatrix1}. 
\subsection{Preparation}
First, we discuss the intersection numbers of twisted cycles when $A$ is a square matrix.
In this subsection, we collect some fundamental properties. In the following, for $x=(x_1,\dots,x_n)\in(\C^*)^n$, $B=({\bf b}(1)|\dots|{\bf b}(m))\in \Z^{n\times m}$,
and ${\bf b}=\transp (b_1,\dots ,b_n)\in \C^{n\times 1}$, 
we set $x^B:=(x^{{\bf b}(1)},\dots,x^{{\bf b}(m)})$ and $e^{2\pi\ii {\bf b}}x :=(e^{2\pi\ii b_1}x_1,\dots,e^{2\pi\ii b_n}x_n)$. By abuse of notation, we often write ${\bf k}\in\Z^{n\times 1}/\Z B$ for the relation $[{\bf k}]\in\Z^{n\times 1}/\Z B$.

Let $M=({\bf m}(1)|\cdots|{\bf m}(n))$ be an $n\times n$ integer matrix whose determinant is not zero. 
We set 
$\tX =\Big\{ (x_1,\dots ,x_n) \in (\C^{*})^n \mid  1-\sum_{i=1}^n x^{{\bf m}(i)}  \neq 0 \Big\}$, 
$\X =\Big\{ (\xi_1,\dots ,\xi_n) \in (\C^{*})^n \mid 1-\sum_{i=1}^n \xi_i  \neq 0 \Big\}$, 
and consider a covering map $p_{M} : \tX \to \X$ defined by 
$p_M(x)=x^M$. 
For $\tilde{\bf k}\in \Z^{n\times 1}/\Z \transp M$, we have a deck transformation 
$g^{(\tilde{\bf k})}: \tX \to \tX$ defined by 
$g^{(\tilde{\bf k})} (x) = e^{\tpi \transp M^{-1}\tilde{\bf k}}x$.
In this way, we can identify the abelian group $\Z^{n\times 1}/\Z \transp M$ with the deck transformation group ${\rm Gal}(\tilde{X}/X)$. 
Note that the degree of the covering map $p_{M}$ is 
$\abs{\Z^{n\times 1}/\Z \transp M}=\abs{\det M}$.
For any parameters $\alpha_0,\alpha_1,\dots,\alpha_n\in\C \setminus \Z$, we set 
$\alpha =\transp{(\alpha_1,\dots,\alpha_n)}\in \C^{n\times 1}$ and  
\begin{align}
  \uu(\xi )
  &=\xi^{M^{-1}\alpha} \Big(1-\sum_{i=1}^n \xi_i \Big)^{\alpha_0}, \\
  \tu(x)=\uu (p_{M}(x))
  &=x^{\alpha} \Big(1-\sum_{i=1}^n x^{{\bf m}(i)} \Big)^{\alpha_0} 
    =\prod_{i=1}^n x_i^{\alpha_i} \cdot \Big(1-\sum_{i=1}^n x^{{\bf m}(i)} \Big)^{\alpha_0} .
\end{align}
Let $\mathcal{L}=\C \uu$ be the local system of flat sections of
$(\mathcal{O}_{X^{an}},d_{\xi}-d_{\xi}\log u \wedge)$, or equivalently
the locally constant sheaf on $X$ defined by the multi-valued function $\uu$. 

We first prove the purity of the twisted homology group $\Homo_n(\tX ;p_M^{-1}\mathcal{L})$. We set $\alpha^\prime=\transp{(-\alpha_0,\alpha_1,\dots,\alpha_n)}$ and define an $(n+1)\times (n+1)$ matrix $M^\prime$ by
$
M^\prime =
({\bf m}^\prime(0)|\cdots|{\bf m}^\prime(n))=
\left(
\begin{array}{cccc}
1&1&\cdots&1\\
\hline
{\bf O}&{\bf m}(1)&\cdots&{\bf m}(n)
\end{array}
\right).
$
We say that $\alpha^\prime$ is non-resonant if for any proper subset $\Gamma\subsetneq\{ 0,\dots,n\}$, $\alpha^\prime\notin{\rm span}_{\C}\{ {\bf m}^\prime(i)\mid i\in \Gamma\}+\Z^{(n+1)\times 1}$. 
\begin{prop}\label{prop:purity-dim}
Suppose that the parameter vector $\alpha^\prime$ is non-resonant and $\alpha_0\notin\Z$. Then, we have the purity
\begin{equation}
\Homo_l\left(\tX ;p_M^{-1}\mathcal{L}\right)=
\begin{cases}
0&(l\neq n)\\
\C^{|\det M|}&(l=n)
\end{cases}
\end{equation}
and the regularization condition is true, i.e., the canonical  morphism ${\rm can}:\Homo_l\left(\tX ;p_M^{-1}\mathcal{L}\right)\rightarrow\Homo_l^{lf}\left(\tX ;p_M^{-1}\mathcal{L}\right)$ is an isomorphism for any $l$, where the symbol $\Homo^{lf}_*$ denotes the locally finite (or Borel-Moore) homology group.  
\end{prop}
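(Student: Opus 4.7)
The plan is to reduce via the \'etale covering $p_M \colon \tX \to X$ to the base $X$, where $X$ is the complement of a well-understood affine arrangement, and then invoke standard purity/regularization results for non-resonant local systems. Since $p_M$ is a finite \'etale map of degree $|\det M|$, the projection formula gives
\begin{equation*}
\Homo_l\bigl(\tX;\, p_M^{-1}\mathcal{L}\bigr) \;\cong\; \Homo_l\bigl(X;\, (p_M)_* p_M^{-1}\mathcal{L}\bigr),
\end{equation*}
and the rank-$|\det M|$ local system $(p_M)_* p_M^{-1}\mathcal{L}$ on $X$ decomposes into rank-one eigensummands $\bigoplus_\chi \mathcal{L}_\chi$ under the action of the deck group $\Z^{n\times 1}/\Z\transp M$, indexed by characters $\chi$ of that group. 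The same reduction works on Borel--Moore homology.

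Next, I would embed $X \hookrightarrow \PP^n$ with complement the normal crossing divisor $D = \{\xi_1 = 0\} \cup \cdots \cup \{\xi_n = 0\} \cup \{1-\sum\xi_i = 0\} \cup D_\infty$ and translate the hypothesis. The monodromy of each $\mathcal{L}_\chi$ around the $n+2$ components of $D$ has the form $\exp(2\pi\ii\,\beta^\chi_j)$, where the $\beta^\chi_j$ are twists of the entries of $M^{-1}\alpha$ and of $\alpha_0$ by the character $\chi$. The columns $m'(0) = \transp{(1,0,\dots,0)}$ and $m'(i) = \transp{(1,\transp m(i))}$ of $M'$ correspond to the $n+1$ affine components $\{1-\sum\xi_i=0\}$ and $\{\xi_i=0\}$, with $D_\infty$ encoded via the shared leading entry $1$; the condition $\alpha'\notin\mathrm{span}_\C\{m'(i):i\in\Gamma\}+\Z^{(n+1)\times 1}$ for every proper $\Gamma$, combined with $\alpha_0\notin\Z$, is exactly what guarantees that for every character $\chi$ and every proper nonempty subcollection of components of $D$, the corresponding product of monodromies of $\mathcal{L}_\chi$ is non-trivial. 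Under this non-resonance, the standard theorem of Esnault--Schechtman--Viehweg (see also Kita--Noumi) yields $j_!\mathcal{L}_\chi \simeq Rj_*\mathcal{L}_\chi$, which immediately gives both the regularization condition $\Homo_l(X;\mathcal{L}_\chi) \simeq \Homo_l^{lf}(X;\mathcal{L}_\chi)$ and, via Artin vanishing on the affine variety $X$, the purity $\Homo^l(X;\mathcal{L}_\chi) = 0$ for $l \neq n$. Transporting these isomorphisms back through $p_M$ gives the purity and regularization on $\tX$.

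Finally, to compute the dimension in top degree, I would use the multiplicativity of Euler characteristic under finite \'etale coverings: $\chi(\tX) = |\det M|\cdot\chi(X)$. Since $X$ is the complement in $\C^n$ of $n+1$ hyperplanes in general position (with a single bounded chamber), an inclusion-exclusion computation gives $\chi(X) = (-1)^n$, and the purity already proved then forces $\dim \Homo_n(\tX; p_M^{-1}\mathcal{L}) = (-1)^n \chi(\tX) = |\det M|$.

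The main obstacle is the character-by-character non-resonance check of the second step: the twist by $\chi$ shifts each monodromy of $\mathcal{L}_\chi$ by a root of unity, and one must verify that this shift cannot conspire with $M^{-1}\alpha$ and $\alpha_0$ to produce a trivial monodromy product over some proper subcollection of components of $D$. In particular, the role of $D_\infty$ must be treated carefully, since it corresponds not to a column of $M'$ but to the unique affine-linear relation among the $m'(i)$; this is precisely why the leading row of $1$'s was incorporated into $M'$, and tracing this relation through the decomposition $\bigoplus_\chi\mathcal{L}_\chi$ is the technical heart of the argument.
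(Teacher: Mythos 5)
Your proposal takes a genuinely different route from the paper's. The paper works entirely ``upstairs'': it introduces the monomial map $j:(\C^*)^{n+1}\rightarrow\C^{n+1}$, $j(x_0,\dots,x_n)=(x_0,x_0x^{{\bf m}(1)},\dots,x_0x^{{\bf m}(n)})$, reduces (via the proof of a result in \cite{MatsubaraEuler} and Poincar\'e duality) the purity and regularization of $\Homo_*\left(\tX;p_M^{-1}\mathcal{L}\right)$ to the statement $j_!(\C x_0^{-\alpha_0}x^{\alpha})\tilde\to\R j_*(\C x_0^{-\alpha_0}x^{\alpha})$, and then checks this stalk by stalk over the toric boundary strata of $\C^{n+1}$ by a uniform lattice computation. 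The non-resonance hypothesis on $\alpha'$ is formulated precisely in terms of the columns ${\bf m}'(i)$ of $M'$, which are the Newton directions of $j$, so it plugs in with essentially no translation. You instead push forward along $p_M$ to $X$, decompose $(p_M)_*p_M^{-1}\mathcal{L}$ into rank-one characters $\mathcal{L}_\chi$ of $\Z^{n\times 1}/\Z\transp M$, and invoke the classical Esnault--Schechtman--Viehweg / Kita--Noumi regularization theorem for each $\mathcal{L}_\chi$ on the simplex arrangement complement $X\subset\PP^n$. Both approaches finish with the same Euler-characteristic count $\chi(\tX)=|\det M|\cdot\chi(X)=(-1)^n|\det M|$.

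Your route is legitimate, but the step you call the ``technical heart'' is precisely the content that the paper's $j$-formulation is designed to bypass, and it is not actually carried out in your proposal: you assert, without proof, that the non-resonance of $\alpha'$ implies, for every character $\chi_{\bf k}$ and every relevant subcollection of the $n+2$ boundary components of $\overline{X}\subset\PP^n$ (including $D_\infty$), the non-integrality of the corresponding sum of exponents $p_i(\alpha+{\bf k})$ and of $\alpha_0+\sum_ip_i(\alpha+{\bf k})$. Verifying this requires identifying the dense edges of the simplex arrangement and then showing that each resulting condition over all ${\bf k}\in\Z^{n\times 1}/\Z M$ is equivalent (via the block structure $M'=\bigl(\begin{smallmatrix}1&{\bf 1}^T\\ {\bf 0}&M\end{smallmatrix}\bigr)$, hence $M'^{-1}=\bigl(\begin{smallmatrix}1&-{\bf 1}^TM^{-1}\\ {\bf 0}&M^{-1}\end{smallmatrix}\bigr)$) to an instance of the hypothesis $\alpha'\notin\mathrm{span}_{\C}\{{\bf m}'(i):i\in\Gamma\}+\Z^{(n+1)\times 1}$. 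This is doable but nontrivial; as written it is a gap, whereas the paper's direct computation on the strata of $\C^{n+1}$ avoids the case-by-$\chi$ analysis entirely.
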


\begin{proof}
We define a map $j:(\C^*)^{n+1}\rightarrow \C^{n+1}$ by $j(x_0,\dots,x_n)=(x_0,x_0x^{{\bf m}(1)},\dots,x_0x^{{\bf m}(n)})$. By the proof of \cite[Theorem 2.12]{MatsubaraEuler} and Poincar\'e duality, we only have to prove that the canonical morphism $j_!(\C x_0^{-\alpha_0}x^\alpha)\rightarrow \R j_*(\C x_0^{-\alpha_0}x^\alpha)$ is an isomorphism in order to prove the purity and the regularization condition. Since $j$ defines a covering map onto its image, we see that for any $\oz\in\C^{n+1}$ in the image of $j$, $\R j_*(\C x_0^{-\alpha_0}x^\alpha)_{\oz}$ is concentrated in degree $0$ and $j_!(\C x_0^{-\alpha_0}x^\alpha)_{\oz} \tilde{\rightarrow}  j_*(\C x_0^{-\alpha_0}x^\alpha)_{\oz}$. On the other hand, if $\oz\in\C^{n+1}$ does not lie in the image of $j$, we have $j_!(\C x_0^{-\alpha_0}x^\alpha)_{\oz}=0$. Therefore, it is enough to show the vanishing $\R j_*(\C x_0^{-\alpha_0}x^\alpha)_{\oz}=0$. This is equivalent to showing the vanishing $\Homo^l(j^{-1}(D);\C x_0^{-\alpha_0}x^\alpha)=0$ for any $l$ and for any small neighborhood $D$ of $\oz$. We may assume that there is a proper subset $\Gamma$ of $\{ 0,\dots,n\}$ so that $\oz_j\neq 0$ if and only if $j\in\Gamma$. By homotopy, it is enough to prove that the local system $\C x_0^{-\alpha_0}x^\alpha$ does not have eigenvalue $1$ along any fundamental loop on $j^{-1}(\{ \zeta\in\C^{n+1}\mid \zeta_j=\oz_j(j\in\Gamma),\zeta_j\neq 0(j\notin\Gamma)\})$. Without loss of generality, we can set $\oz_j=1$ for $j\notin\Gamma$. Now we consider the factorization $j=\iota\circ p$ where $\iota:(\C^*)^{n+1}\rightarrow \C^{n+1}$ is the inclusion and $p:(\C^*)^{n+1}\rightarrow(\C^*)^{n+1}$ is a covering map defined by $p(x_0,\dots,x_n)=j(x_0,\dots,x_n)$. We set $N_1:=N_2:=\Z^{n+1}$. Then, it is easy to see that the morphism $\varphi:N_2\ni (a_0,\dots,a_n)\mapsto \sum_{i=0}^na_i{\bf m}^\prime(i)\in N_1$ of abelian groups induces the morphism $p$. We also set $N_\Gamma=\{ (a_0,\dots,a_n)\in N_2\mid a_i=0 (i\in \Gamma)\}$. Then, we obtain the standard identification $j^{-1}(\{ \zeta\in\C^{n+1}\mid \zeta_j=1(j\in\Gamma),\zeta_j\neq 0(j\notin\Gamma)\})=p^{-1}(\{ \zeta\in(\C^*)^{n+1}\mid \zeta_j=1(j\in\Gamma)\})={\rm Specm}\C[N_1/\varphi(N_\Gamma)].$ Note that the inclusion $p^{-1}(\{ \zeta\in(\C^*)^{n+1}\mid \zeta_j=1(j\in\Gamma)\})\hookrightarrow (\C^*)^{n+1}={\rm Specm}\C[N_1]$ is induced from the projection $\varpi:N_1\rightarrow N_1/\varphi(N_\Gamma)$. Then, the local system $\C x_0^{-\alpha_0}x^\alpha$ has an exponent $(\C\otimes \varpi)(\alpha^\prime)$ on $p^{-1}(\{ \zeta\in(\C^*)^{n+1}\mid \zeta_j=1(j\in\Gamma)\})$. Therefore, it does not have eigenvalue $1$ along a fundamental loop if and only if $(\C\otimes \varpi)(\alpha^\prime)\not\in\C \varphi(N_\Gamma)+N_1$, which is precisely the non-resonant condition. Finally, we see that $\dim_{\C}\Homo_n\left(\tX ;p_M^{-1}\mathcal{L}\right)=|\det M|$ due to the identity of the Euler characteristic $\chi(\tilde{X})=|\det M|\cdot \chi(X)$ and the fact $\chi(X)=(-1)^n$. 
\end{proof}

Under the non-resonant condition above, we can also see that
the canonical morphism $\Homo_n\left(\X ;\mathcal{L}\right)\rightarrow\Homo_n^{lf}\left(\X ;\mathcal{L}\right)$
is an isomorphism.
For an element of $\Homo_n^{lf}\left(\X ;\mathcal{L}\right)$, its inverse image through the canonical morphism is
called the regularization.
Let $C\in \mathcal{Z}_n(\X;\mathcal{L})$ be a twisted cycle such that
the homology class $[C]\in \Homo_n (\X;\mathcal{L})$ is the regularization of
the element of $\Homo_n^{lf}\left(\X ;\mathcal{L}\right)$ defined by the bounded chamber 
\begin{align}
  \label{BoundedChamber}
  \Big\{ (\xi_1,\dots ,\xi_n) \in \R^n \mid \xi_1>0, \dots ,\xi_n>0, 1-\sum_{i=1}^n \xi_i  >0 \Big\} .
\end{align}

We write $(p_{M})_{*}:\Homo_n(\tX ;p_M^{-1}\mathcal{L})\rightarrow\Homo_n(X ;\mathcal{L})$ for the adjoint of the pull-back map $p_{M}^*:\Homo^n(X ;\mathcal{L}^\vee)\rightarrow\Homo^n(\tX ;p_M^{-1}\mathcal{L}^\vee)$. Namely, $(p_{M})_{*}$ is characterized by the equality $\int_{\s}p_{M}^*\omega=\int_{(p_{M})_{*}\s}\omega$ for any $\s\in\Homo_n(\tX ;p_M^{-1}\mathcal{L})$ and $\omega\in\Homo^n(X ;\mathcal{L}^\vee)$.
In fact, $(p_M)_{*}$ is defined on the level of 
$\mathcal{Z}_n(\tX ;p_M^{-1}\mathcal{L})$, 
and its expression for each twisted cycle is explained in the proof of Proposition \ref{prop:lift-intersection} (2) below. 
In \S\ref{subsection:diagonal} and \S\ref{subsection:not-diagonal}, we construct twisted cycles 
$\tC^{(\tilde{\bf k})} \in \mathcal{Z}_n(\tX ;p_M^{-1}\mathcal{L})$ 
($\tilde{\bf k} \in \Z^{n\times 1}/\Z \transp M$)
such that $(p_M)_{*} (\tC^{(\tilde{\bf k})}) =C$,
and evaluate their intersection numbers. 

\begin{prop}
  \label{prop:lift-intersection}
  For $h_1(\xi),\dots ,h_l(\xi) \in \C [\xi_1 ,\dots ,\xi_n]$, 
  $\alpha_1,\dots,\alpha_l\in\C \setminus \Z$, 
  we set $\uu(\xi)=h_1(\xi)^{\alpha_1}\cdots h_l(\xi)^{\alpha_l}$ and  
  $\mathcal{L}=\C \uu$, which are defined on 
  $X=\lef \xi \in \C^n \mid h_1(\xi)\cdots h_l(\xi) \neq 0\righ$.  
  Let $p:\tX \to X$ be a locally biholomorphic covering map of degree $r$,
  where $\tX$ is a complex manifold. 
  \begin{enumerate}
  \item[(1)] 
    For a twisted cycle $\tilde{\s}\in \mathcal{Z}_n(\tX ;p^{-1}\mathcal{L})$, 
    we set $\s =p_{*}(\tilde{\s}) \in \mathcal{Z}_n(\X;\mathcal{L})$. 
    Then, there exist $r$ twisted cycles 
    $\tilde{\s}^{(1)}(=\tilde{\s}),\tilde{\s}^{(2)},\dots ,\tilde{\s}^{(r)}\in \mathcal{Z}_n(\tX ;p^{-1}\mathcal{L})$ 
    such that $p_{*}(\tilde{\s}^{(k)})=\s$ as twisted cycles 
    (not as elements of the twisted homology group) 
    and the support of them are mutually distinct.
  \item[(2)] For twisted cycles $\tilde{\s}\in \mathcal{Z}_n(\tX ;p^{-1}\mathcal{L})$ and 
    $\tilde{\tau}\in \mathcal{Z}_n(\tX ;p^{-1}\mathcal{L}^{\vee})$, 
    we set $\s =p_{*}(\tilde{\s}) \in \mathcal{Z}_n(\X;\mathcal{L})$ and 
    $\tau =p_{*}(\tilde{\tau}) \in \mathcal{Z}_n(\X;\mathcal{L}^{\vee})$. 
    Let $\tilde{\tau}^{(1)},\dots ,\tilde{\tau}^{(r)}\in \mathcal{Z}_n(\tX ;p^{-1}\mathcal{L}^{\vee})$ 
    be the twisted cycles obtained in (1). 
    Then we have 
    \begin{align}
      \langle \s ,\tau \rangle_h =
      \sum_{k=1}^r \langle \tilde{\s} , \tilde{\tau}^{(k)} \rangle_h , 
    \end{align}
    where $\langle \bullet,\bullet \rangle_h$ on the left-hand (resp. right-hand) side is 
    the intersection form on $\Homo_n(\X;\mathcal{L})$ 
    (resp. $\Homo_n(\tX ;p^{-1}\mathcal{L})$). 
  \end{enumerate}
\end{prop}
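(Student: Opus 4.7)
The plan is to exploit the $r$-to-$1$ covering structure of $p$. For part (1), write $\tilde{\sigma} = \sum_j a_j \tilde{\Delta}_j \otimes \tilde{u}|_{\tilde{\Delta}_j}$; each singular simplex $\tilde{\Delta}_j:\Delta^n \to \tilde{X}$ projects to $\Delta_j = p\circ \tilde{\Delta}_j$, and since $\Delta^n$ is simply connected, $\Delta_j$ admits exactly $r$ lifts through $p$. I would construct $\tilde{\sigma}^{(k)}$ by choosing one lift per simplex, done consistently across shared boundary faces so that $\partial \tilde{\sigma}^{(k)} = 0$. In the setting actually needed in the paper, where $p = p_M$ is Galois with deck group $\Z^{n\times 1}/\Z \transp M$ acting freely, the cleanest realization is $\tilde{\sigma}^{(k)} := (g^{(\tilde{\bf k})})_*\tilde{\sigma}$ for each representative $[\tilde{\bf k}]\in \Z^{n\times 1}/\Z \transp M$; the identity $p \circ g^{(\tilde{\bf k})} = p$ then gives $p_*\tilde{\sigma}^{(k)} = \sigma$ on the nose as twisted cycles, and the supports $g^{(\tilde{\bf k})}(\mathrm{supp}\,\tilde{\sigma})$ are mutually distinct by freeness of the deck action together with the fact that $p$ restricts injectively on $\mathrm{supp}\,\tilde{\sigma}$ generically.

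For part (2), I would invoke the standard local formula for the twisted homology intersection number: it is a finite sum over transverse intersection points, each contributing a local intersection multiplicity weighted by a ratio of local system sections (Kita--Yoshida). The goal is to exhibit a contribution-preserving bijection
\begin{equation*}
\bigl(\mathrm{supp}\,\sigma\bigr)\cap \bigl(\mathrm{supp}\,\tau\bigr) \ \longleftrightarrow\ \coprod_{k=1}^{r}\bigl(\mathrm{supp}\,\tilde{\sigma}\ \cap\ \mathrm{supp}\,\tilde{\tau}^{(k)}\bigr).
\end{equation*}
Given a transverse $\xi_0 \in \sigma \cap \tau$, its fiber $p^{-1}(\xi_0)$ has $r$ points; since $\tilde{\sigma}$ provides a unique lift of each simplex of $\sigma$, exactly one preimage $\tilde{\xi}_0$ lies in $\mathrm{supp}\,\tilde{\sigma}$, and this $\tilde{\xi}_0$ then lies in exactly one $\mathrm{supp}\,\tilde{\tau}^{(k)}$ because the $\tilde{\tau}^{(k)}$ partition $p^{-1}(\mathrm{supp}\,\tau)$ by part (1). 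Local biholomorphicity of $p$ near $\tilde{\xi}_0$ preserves both the transversality and the sign of the local intersection number, and since $p^{-1}\mathcal{L}$ is the pullback of $\mathcal{L}$, the local ratio of sections of $\tilde{u}$ at $\tilde{\xi}_0$ equals the corresponding ratio of sections of $u$ at $\xi_0$. Hence the contribution of $\tilde{\xi}_0$ to $\langle \tilde{\sigma},\tilde{\tau}^{(k)}\rangle_h$ matches that of $\xi_0$ to $\langle \sigma,\tau\rangle_h$, and summing over $k$ yields the identity.

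The main obstacle I anticipate is making the consistent-lift construction of part (1) precise beyond the Galois case, since the proposition is stated for an arbitrary locally biholomorphic covering of degree $r$. I would handle this by a path-lifting argument: fix a reference simplex of $\tilde{\sigma}$ and, starting from each of the $r$ preimages of its barycenter, propagate a lift of every other simplex of $\sigma$ using the unique path-lifting property of the covering, with matching on boundary faces ensured by the connectedness of the support of $\tilde{\sigma}$ through its finite simplicial decomposition. A secondary technical point in part (2) is verifying that no intersection point is missed or double counted in the decomposition of $p^{-1}(\mathrm{supp}\,\tau)$ into $\mathrm{supp}\,\tilde{\tau}^{(k)}$; this is immediate from the mutually distinct supports of part (1) together with the fact that every $\tilde{\xi}_0 \in p^{-1}(\xi_0)$ for $\xi_0 \in \sigma \cap \tau$ automatically lies in $p^{-1}(\mathrm{supp}\,\tau)$.
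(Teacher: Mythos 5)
Your approach is essentially the paper's. For part (2), both invoke the Kita--Yoshida local formula for twisted intersection numbers, match each transverse point $\xi_0 \in \mathrm{supp}\,\sigma \cap \mathrm{supp}\,\tau$ with the unique $\tilde{\xi}_0 \in p^{-1}(\xi_0) \cap \mathrm{supp}\,\tilde{\sigma}$ and the unique index $k$ with $\tilde{\xi}_0 \in \mathrm{supp}\,\tilde{\tau}^{(k)}$, and then observe that local biholomorphicity of $p$ preserves the local multiplicity while the relation $p_{*}\tilde{\sigma}=\sigma$ forces the weighted branch values $\tilde{\mu}_i\,\tilde{u}_{\tilde{\Delta}_i}(\tilde{\xi}_0)$ and $\mu_i\,u_{\Delta_i}(\xi_0)$ to coincide (and similarly for $\tau$), so each contribution matches term by term. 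For part (1) in the Galois case, your deck-transformation construction is exactly the paper's one-line proof.

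There is, however, a genuine issue in the path-lifting fallback you sketch for part (1) when the covering is not normal. Propagating lifts by unique path-lifting starting from a different preimage of the reference barycenter will in general fail to assemble into a cycle: a loop in $\mathrm{supp}\,\sigma$ that lifts closed from one sheet (as it must, since $\tilde{\sigma}$ is a cycle) need not lift closed starting from another sheet unless the covering is normal. ``Connectedness of the support'' does not cure this; the obstruction is precisely the monodromy action of $\pi_1(\mathrm{supp}\,\sigma)$ on the fiber. This gap does not affect the paper's applications, which involve only abelian (hence normal) coverings of tori, and the paper's own proof of (1) --- ``consider the deck transformations'' --- is in fact no more general; but as written your argument does not establish (1) beyond the normal case.
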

The twisted cycles $\tilde{\s}^{(1)},\dots ,\tilde{\s}^{(r)}$ in (1) are called 
the \textit{lifts} of $\s$. 
\begin{proof}
  \begin{enumerate}
  \item[(1)] Consider the deck transformations. 
  \item[(2)] We take an open covering $\{ U_{\lambda} \}_{\lambda}$ of $X$ such that 
    we have $p^{-1}(U_{\lambda})=\bigsqcup_{k=1}^r \tilde{U}_{\lambda,k}$ and each 
    $p|_{\tilde{U}_{\lambda,k}} : \tilde{U}_{\lambda,k} \to U_{\lambda}$  
    is biholomorphic.
    We may assume that $\s$ and $\tau$ are expressed as 
    \begin{align}
      \label{eq:cycle-expression-proof}
      \s = \sum_{i} \mu_i \cdot \Delta_i \ot u_{\Delta_i} ,\quad 
      \tau = \sum_{j} \mu'_j \cdot \Delta'_j \ot u_{\Delta'_j}^{-1}
      \qquad (\mu_i,\mu'_j\in \C),
    \end{align}
    where $\Delta_i$ and $\Delta'_j$ are simply connected and 
    included in some $U_{\lambda}$, and  
    the intersection $\Delta_i \cap \Delta'_j$ is at most one point 
    at which they intersect transversally. 
    By \cite{KitaYoshida1}, the intersection number is evaluated as 
    \begin{align}
      \label{eq:homology-intersection}
      \langle \s ,\tau \rangle_h
      =\sum_{\Delta_i \cap \Delta'_j =\{ \xi \} }
      \mu_i \mu'_j \cdot
      I_{loc}(\Delta_i , \Delta'_j) \cdot 
      u_{\Delta_i}(\xi) \cdot u_{\Delta'_j}(\xi)^{-1} .
    \end{align}
    Let $\tilde{\s}=\tilde{\s}^{(1)},\tilde{\s}^{(2)},\dots ,\tilde{\s}^{(r)}$ and 
    $\tilde{\tau}^{(1)},\dots ,\tilde{\tau}^{(r)}$ be the lifts of $\s$ and $\tau$, respectively. 
    We can express them as 
    \begin{align}
      \tilde{\s}^{(k)} = \sum_{i} {\tilde{\mu}_{i}}^{(k)} \cdot 
      \tilde{\Delta}_{i}^{(k)} \ot \tu_{\tilde{\Delta}_{i}^{(k)}} ,\quad 
      \tilde{\tau}^{(k)} = \sum_{j} \tilde{\mu}'_{j}{}^{(k)} \cdot 
      \tilde{\Delta}'_{j}{}^{(k)} \ot \tu_{\tilde{\Delta}'_{j}{}^{(k)}}^{-1} , 
    \end{align}
    where 
    $p^{-1} (\Delta_i) = \bigsqcup_{k=1}^r \tilde{\Delta}_{i}^{(k)}$ and 
    $p^{-1} (\Delta'_j) = \bigsqcup_{k=1}^r \tilde{\Delta}'_{j}{}^{(k)}$. 
    For an intersection point $\{ \xi \}=\Delta_i \cap \Delta'_j$ in (\ref{eq:homology-intersection}), 
    there exist a unique $x\in \tX$ and a unique $k_x \in \{1,\dots,r\}$ such that 
    $p(x)=\xi$ and $x\in \tilde{\Delta}_{i}^{(1)} \cap \tilde{\Delta}'_{j}{}^{(k_x)}$. 
    Conversely, all the intersection points of $\tilde{\s}^{(1)}$ and 
    $\tilde{\tau}^{(k)}$'s 
    are obtained in this way. 
    Thus, we have 
    \begin{align}
      \sum_{k=1}^r \langle \tilde{\s}^{(1)} , \tilde{\tau}^{(k)} \rangle_h
      =\sum_{\substack{\Delta_i \cap \Delta'_j =\{ \xi \} \\ x \in \tilde{\Delta}_{i}^{(1)} \ \textrm{and} \ p(x)=\xi} }
      \tilde{\mu}_{i}^{(1)} \tilde{\mu}'_{j}{^{(k_x)}} \cdot 
      I_{loc}(\tilde{\Delta}_{i}^{(1)} , \tilde{\Delta}'_{j}{}^{(k_x)})
      \cdot \tu_{\tilde{\Delta}_{i}^{(1)}}(x) \cdot \tu_{\tilde{\Delta}'_{j}{}^{(k_x)}}(x)^{-1}  .
    \end{align}
    Since each $p|_{\tilde{\Delta}_{i}^{(1)}}$ is biholomorphic, it follows that 
    $I_{loc}(\tilde{\Delta}_{i}^{(1)} , \tilde{\Delta}'_{j}{}^{(k_x)})=I_{loc}(\Delta_i , \Delta'_j)$.
    By comparing (\ref{eq:cycle-expression-proof}) with 
    $\s = p_{*}(\tilde{\s}^{(1)}) 
    =\sum_{i} {\tilde{\mu}_{i}}^{(1)} \cdot 
    \Delta_{i} \ot \tu_{\tilde{\Delta}_{i}^{(1)}}\circ p|_{\tilde{\Delta}_{i}^{(1)}}^{-1}$, 
    we obtain 
    \begin{align}
      \tilde{\mu}_i^{(1)} = \mu_i \cdot \frac{u_{\Delta_i}}{\tu_{\tilde{\Delta}_i} \circ p|_{\tilde{\Delta}_{i}^{(1)}}^{-1}} ,
    \end{align}
    and hence 
    \begin{align}
      \tilde{\mu}_{i}^{(1)} \cdot \tu_{\tilde{\Delta}_{i}^{(1)}}(x)
      =\mu_i \cdot \frac{u_{\Delta_i}(\xi)}{\tu_{\tilde{\Delta}_{i}^{(1)}} \circ p|_{\tilde{\Delta}_{i}^{(1)}}^{-1}(\xi)} 
      \cdot \tu_{\tilde{\Delta}_{i}^{(1)}}(x)
      =\mu_i \cdot \frac{u_{\Delta_i}(\xi)}{\tu_{\tilde{\Delta}_{i}^{(1)}}(x)} 
      \cdot \tu_{\tilde{\Delta}_{i}^{(1)}}(x)
      =\mu_i \cdot u_{\Delta_i}(\xi) .
    \end{align}
    Similarly, we have 
    $\tilde{\mu}'_{j}{}^{(k_x)} \cdot \tu_{\tilde{\Delta}_{j}^{(k_x)}}(x)^{-1}=\mu'_j \cdot u_{\Delta_j}(\xi)^{-1}$. 
    Therefore, we obtain
    \begin{align}
      \sum_{k=1}^r \langle \tilde{\s} , \tilde{\tau}^{(k)} \rangle_h  
      =\sum_{k=1}^r \langle \tilde{\s}^{(1)} , \tilde{\tau}^{(k)} \rangle_h
      =\sum_{\Delta_i \cap \Delta'_j =\{ \xi \} }
      \mu_i \mu'_j \cdot
      I_{loc}(\Delta_i , \Delta'_j)
      \cdot u_{\Delta_i}(\xi) \cdot u_{\Delta'_j}(\xi)^{-1}
      =\langle \s ,\tau \rangle_h .
    \end{align}
  \end{enumerate}
\end{proof}

\subsection{Diagonal cases}\label{subsection:diagonal}
First, we consider the case when the matrix $M$ is diagonal. 
We assume $M=D=\diag (m_1,\dots ,m_n)$, where each $m_i$ is a positive integer. 
Then we have
$\tX =\Big\{ (x_1,\dots ,x_n) \in (\C^{*})^n \mid  1- \sum_{i=1}^n x_i^{m_i} \neq 0\Big\}$, 
$\X =\Big\{ (\xi_1,\dots ,\xi_n) \in (\C^{*})^n \mid  1-\sum_{i=1}^n \xi_i  \neq 0\Big\}$, 
$\tu(x)
=\prod_{i=1}^n x_i^{\alpha_i} \cdot \Big(1-\sum_{i=1}^n x_i^{m_i} \Big)^{\alpha_0}$, 
$\uu(\xi )
=\prod_{i=1}^n \xi_i^{\frac{\alpha_i}{m_i}} \cdot \Big(1-\sum_{i=1}^n \xi_i \Big)^{\alpha_0}$, 
and $p_D(x_1,\dots ,x_n)=(x_1^{m_1},\dots ,x_n^{m_n})$. 
The covering map $p_D$ is of degree $m_1 \cdots m_n$. 

We review the construction of the twisted cycle $C$ whose homology class is the regularization 
of the bounded chamber (\ref{BoundedChamber}). 
For details, the readers may refer to \cite[\S3.2.4]{AomotoKita}. 
For a sufficiently small positive number $\varepsilon$, let $\triangle$ be a closed simplex 
$\Big\{ (\xi_1,\dots ,\xi_n) \in \R^n \mid \xi_1 \geq \varepsilon, \dots ,\xi_n\geq \varepsilon, 
1-\sum_{i=1}^n \xi_i  \geq \varepsilon \Big\}$.
For $j=0, \dots ,n$, 
let $T_j$ be the $\varepsilon$-neighborhood of the divisor $(\xi_j=0)$ 
(when $j=0$, replace ``$(1-\sum_{i=1}^n \xi_i=0)$"), 
$l_j$ the face of $\triangle$ given by $\triangle \cap \conj{T_j}$, and 
$S_j$ the circle with radius $\varepsilon$ such that $\partial T_j =S_j \times (\xi_j=0)$. 
The twisted cycle $C$ is expressed as 
\begin{align}
  C=\triangle \ot \uu +\sum_{\emptyset \neq I \subsetneq \{ 0,\dots ,n \}}
  \left( \prod_{i\in I} \frac{1}{e^{\tpi \frac{\alpha_i}{m_i}}-1} \cdot 
  \left(\left( \bigcap_{i\in I}l_i \right) \times \prod_{i\in I}S_i \right)\ot \uu
  \right) ,
\end{align}
where we set $m_0 =1$. 
The branch of $u$ on $\triangle$ is given by 
$\arg \xi_1 = \cdots =\arg \xi_n =\arg (1-\sum_{i=1}^n \xi_i)=0$, 
and those of the other chains are defined by its analytic continuations. 

For $i=1,\dots ,n$, let $S_i^{m_i}$ be the oriented circle 
going around the divisor $(\xi_i=0)$ $m_i$ times (see Figure \ref{fig:circles}). 
If the branches of $\uu$ at the starting points of $S_i$ and $S_i^{m_i}$ are same, then 
we have
\begin{align}
  S_i^{m_i} \ot \uu
  =\sum_{q=0}^{m_i-1} S_i \ot e^{\tpi \frac{\alpha_i}{m_i}q} \uu
  =\Bigg( \sum_{q=0}^{m_i-1} e^{\tpi \frac{\alpha_i}{m_i}q} \Bigg) S_i \ot \uu
  =\frac{e^{\tpi \alpha_i}-1}{e^{\tpi \frac{\alpha_i}{m_i}}-1} \cdot S_i \ot \uu
\end{align}
modulo the image of the boundary operator. Then the twisted cycle $C$ is also expressed as 
\begin{align}
  \label{eq:reguralization-expression2}
  C=\triangle \ot \uu +\sum_{\emptyset \neq I \subsetneq \{ 0,\dots ,n \}}
  \left( \prod_{i\in I} \frac{1}{e^{\tpi \alpha_i}-1} \cdot 
  \left(\left( \bigcap_{i\in I}l_i \right) \times \prod_{i\in I}S_i^{m_i} \right)\ot \uu
  \right)
\end{align}
modulo the image of the boundary operator. For $i=1,\dots ,n$ and $q \in \Z$, 
let $\tilde{S}_i^{(q)}$ be the circle in $\tX$ defined by $p_D^{-1}(S_i^{m_i})$ whose starting point is 
$x_i =\sqrt[m_i]{\varepsilon}e^{\tpi \frac{q}{m_i}}$. 
We have 
$(p_D)_{*}(\tilde{S}_i^{(q)}\ot \tu) = e^{\tpi \frac{\alpha_i}{m_i}q} \cdot S_i^{m_i} \ot \uu$ 
where the branch $\tu$ is defined by $\uu \circ p_D$. 
The preimage $p_D^{-1}(\triangle)$ is decomposed into 
$p_D^{-1}(\triangle) = \bigsqcup_{{\bf q}\in \Z^{n\times 1} /\Z \transp{D}} \tilde{\triangle}^{({\bf q})}$, 
where $\Z^{n\times 1} /\Z \transp{D}
=\Z^{n\times 1} /(\Z \diag(m_1,\dots ,m_n)) =(\Z/m_1 \Z)\times \cdots \times (\Z/m_n \Z)$ 
(column vectors) and $\tilde{\triangle}^{({\bf q})}$ is characterized by the condition 
$(\sqrt[m_1]{\varepsilon}e^{\tpi \frac{q_1}{m_1}},\dots,\sqrt[m_n]{\varepsilon}e^{\tpi \frac{q_n}{m_n}})
\in \tilde{\triangle}^{({\bf q})}$ for  
${\bf q}=\transp{(q_1,\dots ,q_n)} \in \Z^{n\times 1} /\Z \transp{D}$.  
Let $\tilde{l}_j^{({\bf q})}$ be the face of $\tilde{\triangle}^{({\bf q})}$ 
that satisfies $p_D(\tilde{l}_j^{({\bf q})}) =l_j$, 
and $\tilde{S}_0^{({\bf q})}$ the connected component of the preimage $p_D^{-1} (S_0)$ 
whose starting point is in $\tilde{l}_j^{({\bf q})}$. 
We have 
$(p_D)_{*}(\tilde{\triangle}^{({\bf q})} \ot \tu) 
=\prod_{i=1}^n e^{\tpi \frac{\alpha_i}{m_i}q_i} \cdot \triangle \ot \uu$, 
$(p_D)_{*}(\tilde{l}_j^{({\bf q})} \ot \tu) 
=\prod_{i=1}^n e^{\tpi \frac{\alpha_i}{m_i}q_i} \cdot l_j \ot \uu$, and 
$(p_D)_{*}(\tilde{S}_0^{({\bf q})} \ot \tu) 
=\prod_{i=1}^n e^{\tpi \frac{\alpha_i}{m_i}q_i} \cdot S_0 \ot \uu$,  
where the branches $\tu$ are defined by $\uu \circ p_D$. 

\begin{figure}[h]
  \centering
  \begin{tikzpicture}
    \draw (0,0) circle [radius=2pt];
    \draw[->, smooth, samples=100, variable=\t, domain=0:6.20] plot (\t r:{1});
    \draw (1,1) node {$S_i$};
    \draw (0,0) node[below] {$\xi_i=0$};
  \end{tikzpicture}
  \hspace{60pt}
  \begin{tikzpicture}
    \draw (0,0) circle [radius=2pt];
    \draw[->, smooth, samples=100, variable=\t, domain=0:18.84] plot (\t r:{1+(0.02*\t)});
    \draw (1.5,1) node {$S_i^{3}$};
    \draw (0,0) node[below] {$\xi_i=0$};
  \end{tikzpicture}
  \caption{$S_i=S_i^1$ and $S_i^{m_i}$}\label{fig:circles}
\end{figure}
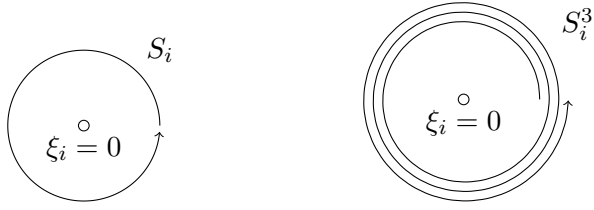

For ${\bf q}=\transp{(q_1,\dots ,q_n)} \in \Z^{n\times 1} /\Z \transp{D}$, we set 
\begin{equation}
  \tC^{({\bf q})} 
= \frac{1}{ e^{\tpi \transp{\bf q} D^{-1} \alpha}}
  \left(\tilde{\triangle}^{({\bf q})} \ot \tu +\sum_{\emptyset \neq I \subsetneq \{ 0,\dots ,n \}}
  \left( \prod_{i\in I} \frac{1}{e^{\tpi \alpha_i}-1} \cdot 
  \left(\left( \bigcap_{i\in I}\tilde{l}_i^{({\bf q})} \right) \times \prod_{i\in I}\tilde{S}_i^{({\bf q})} \right)\ot \tu
  \right) \right) , \label{eq:def-lifted-cycle}
\end{equation}
where $\tilde{S}_i^{({\bf q})} =\tilde{S}_i^{(q_i)}$ if $i=1,\dots ,n$. It is easy to see that $\tC^{({\bf q})}$ defines an element of $\mathcal{Z}_n(\tX,p_D^{-1}\mathcal{L})$ as in \cite[\S3.2.4]{AomotoKita}. Note that $e^{\tpi \transp{\bf q} D^{-1} \alpha}=\prod_{i=1}^n e^{\tpi \frac{\alpha_i}{m_i}q_i}$. By the expression (\ref{eq:reguralization-expression2}), this cycle satisfies
$(p_D)_{*} (\tC^{({\bf q})}) =C$.

\begin{rem}
  Note that $\tC^{({\bf q})}$ is independent of the choice of the representative 
  ${\bf q}$ of $\Z^{n\times 1} /\Z \transp{D} =(\Z/m_1 \Z)\times \cdots \times (\Z/m_n \Z)$. 
  As an example, let us compare two cycles
  \begin{align}
    \tC^{(0)} &= 
    \tilde{\triangle}^{(0)} \ot \tu_0 
    +\frac{1}{e^{\tpi \alpha_1}-1} \tilde{S}_1^{(0)} \ot \tu_0 
    -\frac{1}{e^{\tpi \alpha_0}-1} \tilde{S}_0^{(0)} \ot \tu_0 , \\
    \tC^{(m_1)} &= \frac{1}{e^{\tpi \alpha_1} }
    \left(\tilde{\triangle}^{(m_1)} \ot \tu_1 
    +\frac{1}{e^{\tpi \alpha_1}-1} \tilde{S}_1^{(m_1)} \ot \tu_1 
    -\frac{1}{e^{\tpi \alpha_0}-1} \tilde{S}_0^{(m_1)} \ot \tu_1 
    \right)
  \end{align}
  for $n=1$. 
  Though $\tilde{\triangle}^{(0)}$ (resp. $\tilde{S}_1^{(0)}$, $\tilde{S}_0^{(0)}$) and 
  $\tilde{\triangle}^{(m_1)}$ (resp. $\tilde{S}_1^{(m_1)}$, $\tilde{S}_0^{(m_1)}$) are same as sets, 
  the branches $\tu_0$ on $\tilde{\triangle}^{(0)}$ and $\tu_1$ on $\tilde{\triangle}^{(m_1)}$ are different: 
  $\tu_1 =e^{\tpi \alpha_1} \cdot \tu_0$. 
  Thus, $\tC^{(0)}$ and $\tC^{(m_1)}$ defines the same twisted cycle. 
\end{rem}

For a twisted cycle $\sigma \in \mathcal{Z}_n(\tX ;p_D^{-1}\mathcal{L})$, 
we write $\sigma^{\vee} \in \mathcal{Z}_n(\tX ;p_D^{-1}\mathcal{L}^{\vee})$ for a twisted cycle which can be obtained by replacing $\tu$ with $\tu^{-1}=1/\tu$ in the construction of $\sigma$. 
\begin{prop}
  For ${\bf q}=\transp{(q_1,\dots ,q_n)}, {\bf q}'=\transp{(q'_1,\dots ,q'_n)} \in \Z^{n\times 1} /\Z \transp{D}$, 
  we assume $0 \leq q_i ,q'_i \leq m_i -1$ and 
  set $I({\bf q},{\bf q}')=\{ i \mid q_i=q_i' \}$. 
  Then we have 
  \begin{align}
    \label{eq:intersection-number-lift}
    \langle \tC^{({\bf q})}, \tC^{({\bf q}')\vee}\rangle_h
    =\frac{\left( 1-e^{\tpi \alpha_0} \prod_{i\in I({\bf q},{\bf q}')} e^{\tpi \alpha_i}  \right) \cdot 
    \prod_{i\not\in I({\bf q},{\bf q}')} e^{\tpi \alpha_i \{ \frac{q'_i -q_i}{m_i} \} }}
    {(1-e^{\tpi \alpha_0})\prod_{i=1}^n (1-e^{\tpi \alpha_i})} , 
  \end{align}
  where 
  \begin{align}
    \lef \frac{q'_i -q_i}{m_i} \righ =
    \begin{cases}
      \frac{q'_i-q_i}{m_i} & (q_i < q'_i) \\
      \frac{m_i+q'_i-q_i}{m_i} & (q_i > q'_i) . 
    \end{cases}
  \end{align}
\end{prop}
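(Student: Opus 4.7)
The plan is to compute the intersection number directly by applying the Kita-Yoshida local intersection formula \eqref{eq:homology-intersection} to the chain-level decomposition \eqref{eq:def-lifted-cycle} of both cycles. First I would introduce, for each proper subset $J \subsetneq \{0, 1, \ldots, n\}$, the $n$-chain
\begin{equation*}
\tilde\theta^{({\bf q})}_J := \Bigl(\bigcap_{i \in J}\tilde l_i^{({\bf q})}\Bigr) \times \prod_{i \in J}\tilde S_i^{({\bf q})},
\end{equation*}
with the convention $\tilde\theta^{({\bf q})}_{\emptyset} = \tilde\triangle^{({\bf q})}$. Then $\tC^{({\bf q})}$ and $\tC^{({\bf q}')\vee}$ are weighted sums of such chains carrying branches of $\tu$ and $\tu^{-1}$ respectively, and the total intersection number is the double sum over pairs $(J, J')$ of intersection numbers of the individual pieces.

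The next step is to determine which pairs $(J, J')$ give non-trivial contributions. The key geometric facts are: (i) $\tilde\triangle^{({\bf q})} \cap \tilde\triangle^{({\bf q}')} = \emptyset$ unless ${\bf q} = {\bf q}'$; (ii) for $i \geq 1$, the circle $\tilde S_i^{(q)} = p_D^{-1}(S_i^{m_i})$ does not depend on $q$ as a subset of $\tX$ (only the designated starting point, and hence the attached branch of $\tu$, changes with $q$); (iii) for $i = 0$, the loops $\tilde S_0^{({\bf q})}$ are pairwise disjoint small circles around distinct preimages of the face of $\triangle$ lying on $\{1-\sum \xi_i = 0\}$. Combined, these force every index $i \notin I({\bf q}, {\bf q}')$ to appear in $J \cap J'$ (otherwise the tube directions sit at disjoint heights on different lifts), and permit the index $0$ to appear in $J \cup J'$ only when ${\bf q} = {\bf q}'$.

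Once the allowable pairs are narrowed down, each surviving intersection is an isolated point at (or near) a corner of a lifted simplex, and \eqref{eq:homology-intersection} returns three factors: the combinatorial prefactors $\prod_{i \in J}(e^{\tpi\alpha_i}-1)^{-1}$ and $\prod_{i \in J'}(e^{-\tpi\alpha_i}-1)^{-1}$, the local intersection multiplicity $I_{loc}$, and the ratio $\tu_{\tilde\Delta}(x)\tu_{\tilde\Delta'}(x)^{-1}$ of the branches of $\tu$ attached to the two chains. The only genuinely new piece is the branch ratio: along each common circle $\tilde S_i^{({\bf q})} = \tilde S_i^{({\bf q}')}$ with $i \notin I({\bf q}, {\bf q}')$, one must transport the branch from the starting point of $\tilde S_i^{({\bf q})}$ to that of $\tilde S_i^{({\bf q}')}$, which picks up exactly $e^{\tpi\alpha_i\{(q_i'-q_i)/m_i\}}$ once the representative $0 \leq q_i'-q_i < m_i$ is fixed.

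After extracting this phase, together with the combined normalisation $e^{\tpi\transp({\bf q}'-{\bf q})D^{-1}\alpha}$ inherited from \eqref{eq:def-lifted-cycle}, the remaining sum over subsets $J \subseteq I({\bf q}, {\bf q}') \cup \{0\}$ becomes the standard Kita-Yoshida self-intersection of a regularised simplex with $|I({\bf q},{\bf q}')| + 1$ exponents, which collapses to
\begin{equation*}
\frac{1 - e^{\tpi\alpha_0}\prod_{i \in I({\bf q},{\bf q}')}e^{\tpi\alpha_i}}{(1 - e^{\tpi\alpha_0})\prod_{i \in I({\bf q},{\bf q}')}(1 - e^{\tpi\alpha_i})}.
\end{equation*}
Multiplying by the phase and by the uncontracted factors $(1-e^{\tpi\alpha_i})^{-1}$ for $i \notin I({\bf q}, {\bf q}')$ (arising from the tube chains forced to appear in both $J$ and $J'$) reproduces \eqref{eq:intersection-number-lift}. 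The principal obstacle is the bookkeeping: each chain in \eqref{eq:def-lifted-cycle} carries a definite branch of $\tu$ determined by analytic continuation from the reference branch on $\tilde\triangle^{({\bf q})}$, and these branches must be tracked consistently through every tube in order to verify that the various local phase contributions combine into the clean fractional-part expression and that the $I({\bf q},{\bf q}') = \{1,\dots,n\}$ specialisation recovers the classical self-intersection formula.
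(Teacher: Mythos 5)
Your overall plan---decomposing both $\tC^{({\bf q})}$ and $\tC^{({\bf q}')\vee}$ into chain pieces indexed by subsets $J, J'\subsetneq\{0,\dots,n\}$, pruning the pairs $(J,J')$ that can actually meet, extracting a branch ratio and a local weight for each index outside $I({\bf q},{\bf q}')$, and reducing the remaining sum to a lower-dimensional self-intersection via Kita--Yoshida---is exactly the approach taken in the paper. However, there is a concrete error in your pruning step that, carried through literally, would not reproduce the stated formula.

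You claim that each $i\notin I({\bf q},{\bf q}')$ must lie in $J\cap J'$, i.e.\ that the tube around $x_i=0$ is present in \emph{both} intersecting pieces. That is not correct. In the Kita--Yoshida regularization the two tubes around the same divisor are drawn at slightly different radii, hence are disjoint circles in the $x_i$-line, and a pair with $i\in J\cap J'$ contributes nothing. What actually happens---as shown in Figure~\ref{fig:intersection-1dim} and used in the paper's proof---is that for $i\notin I({\bf q},{\bf q}')$ the tube of one cycle crosses the \emph{radial segment} (the non-tube part) of the other at a single point, so $i$ belongs to exactly one of $J$, $J'$. This is not a harmless slip: a single tube contributes one weight $\frac{1}{e^{\tpi\alpha_i}-1}$, which combined with $I_{loc}=-1$ yields $\frac{1}{1-e^{\tpi\alpha_i}}$ per such $i$, matching the denominator of \eqref{eq:intersection-number-lift}; tubes in both $J$ and $J'$ would instead produce $\frac{1}{(e^{\tpi\alpha_i}-1)(e^{-\tpi\alpha_i}-1)}$, which does not appear in the answer. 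Once this is corrected---so the extraction for each $i\notin I({\bf q},{\bf q}')$ is the single factor $\frac{-(\text{branch ratio})}{e^{\tpi\alpha_i}-1}$ and not a product of two tube weights---the rest of your outline, namely the phase $e^{\tpi\alpha_i\{(q_i'-q_i)/m_i\}}$ from transporting the branch along the shared circle and the collapse of the sum over $J\subseteq I({\bf q},{\bf q}')\cup\{0\}$ to the classical regularized-simplex self-intersection, is correct and coincides with the paper's argument.
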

\begin{proof}
  First, we assume $|I({\bf q},{\bf q}')|$ is neither $0$ nor $n$. 
  By using a discussion similar to the proof of
  \cite[Proposition in p.175]{KitaYoshida2}, we obtain 
  \begin{align}
    \langle \tC^{({\bf q})}, \tC^{({\bf q}')\vee}\rangle_h
    =\frac{1}{\prod_{i=1}^n e^{\tpi \frac{\alpha_i}{m_i}(q_i-q'_i)}}
    \cdot \prod_{i\not\in I({\bf q},{\bf q}')} \frac{-1 \cdot (\textrm{difference})}{e^{\tpi \alpha_i}-1} 
    \cdot \langle C_1^{({\bf q},{\bf q}')}, C_1^{({\bf q},{\bf q}')\vee} \rangle_h ,
  \end{align}
  where ``$-1$'' in the middle comes from
  the local intersection multiplicity, 
  ``(difference)'' is 1 (resp. $e^{\tpi \alpha_i}$) when 
  $q_i<q'_i$ (resp. $q_i>q'_i$) which  
  expresses the difference of the branches 
  $\tu$ on $\tilde{S}_i^{(q_i)}$ and $1/ \tu$ on $\tilde{\triangle}_i^{({\bf q}')}$
  (see Figure \ref{fig:intersection-1dim}), 
  and $C_1^{({\bf q},{\bf q}')}$ is a twisted cycle (of dimension $|I({\bf q},{\bf q}')|$) 
  \begin{align}
    \bigcap_{i\not\in I({\bf q},{\bf q}')}\tilde{l}_i^{({\bf q})} \ot \tu|_{X_{{\bf q},{\bf q}'}}
    +\sum_{\emptyset \neq I \subsetneq \{ 0\} \cup I({\bf q},{\bf q}')}
    \left( \prod_{i\in I} \frac{1}{e^{\tpi \alpha_i}-1} \cdot 
    \left(\left( \bigcap_{i\in I\cup I({\bf q},{\bf q}')^c}\tilde{l}_i^{({\bf q})} \right) 
    \times \prod_{i\in I}\tilde{S}_i^{({\bf q})} \right)\ot \tu|_{X_{{\bf q},{\bf q}'}}
    \right) 
  \end{align}
  in the $|I({\bf q},{\bf q}')|$-dimensional space 
  $X_{{\bf q},{\bf q}'}=\bigcap_{i\not\in I({\bf q},{\bf q}')} 
  \{x_i=\sqrt[m_i]{\varepsilon}e^{\tpi \frac{q_i}{m_i}}\}$. 
  Since the self-intersection number
  $\langle C_1^{({\bf q},{\bf q}')}, C_1^{({\bf q},{\bf q}')\vee} \rangle_h$
  is evaluated as 
  that of a $|I({\bf q},{\bf q}')|$-simplex (see \cite[Example in p.177]{KitaYoshida2}), 
  it equals to  
  \begin{align}
    \frac{1-e^{\tpi \alpha_0} \prod_{i\in I({\bf q},{\bf q}')} e^{\tpi \alpha_i} }
    {(1-e^{\tpi \alpha_0})\prod_{i\in I({\bf q},{\bf q}')} (1-e^{\tpi \alpha_i})} , 
  \end{align}
  and hence we obtain (\ref{eq:intersection-number-lift}). 
  Next, we consider the case $I({\bf q},{\bf q}')=\emptyset$. 
  In this case, we have 
  \begin{align}
    \langle \tC^{({\bf q})}, \tC^{({\bf q}')\vee}\rangle_h
    =\frac{1}{\prod_{i=1}^n e^{\tpi \frac{\alpha_i}{m_i}(q_i-q'_i)}}
    \cdot \prod_{i=1}^n \frac{-1 \cdot (\textrm{difference})}{e^{\tpi \alpha_i}-1}  
  \end{align}
  which is equivalent to (\ref{eq:intersection-number-lift}). 
  Finally, we consider the self-intersection number 
  $\langle \tC^{({\bf q})}, \tC^{({\bf q})\vee}\rangle_h$. 
  It can be evaluated
  in the same manner as that for the regularization of an $n$-simplex. 
  Therefore we have 
  \begin{align}
    \langle \tC^{({\bf q})}, \tC^{({\bf q})\vee}\rangle_h
    =\frac{ 1-\prod_{j=0}^n e^{\tpi \alpha_j} }
    {\prod_{j=0}^n (1-e^{\tpi \alpha_j})} ,   
  \end{align}
  and the proof is completed. 
\begin{figure}[h]
  \centering
  \begin{tikzpicture}
    \draw (0,0) circle [radius=2pt];
    \draw (0,0) node[below] {$0$};
    \draw[->] (0.866,0.5) arc [start angle = 30, end angle = 380, radius = 1];
    \draw[->] (0.866,0.5) -- ++ (30:1);
    \draw[->] (0.866*2,0.5*2) arc [start angle = -150, end angle = 200, radius = 1];
    \draw (0.866*3,0.5*3) circle [radius=2pt];
    \draw (0.866*3,0.5*3) node[below] {$\zeta_{m_i}^{q_i}$};
    \draw[->] (-0.5*0.8,0.866*0.8) arc [start angle = 120, end angle = 470, radius = 0.8];
    \draw[->] (-0.5*0.8,0.866*0.8) -- ++ (120:1.2);
    \draw[->] (-0.5*2,0.866*2) arc [start angle = -60, end angle = 290, radius = 1];
    \draw (-0.5*3,0.866*3) circle [radius=2pt];
    \draw (-0.5*3,0.866*3) node[below] {$\zeta_{m_i}^{q'_i}$};
    \draw (3,0) node {$\tilde{C}^{(q_1)}$};
    \draw (0.2,3) node {$\tilde{C}^{(q'_1)\vee}$};
    \fill [black] (-0.5,0.866) circle [radius=3pt];
  \end{tikzpicture}
  \hspace{60pt}
  \begin{tikzpicture}
    \draw (0,0) circle [radius=2pt];
    \draw (0,0) node[below] {$0$};
    \draw[->] (0.866*0.8,0.5*0.8) arc [start angle = 30, end angle = 380, radius = 0.8];
    \draw[->] (0.866*0.8,0.5*0.8) -- ++ (30:1.2);
    \draw[->] (0.866*2,0.5*2) arc [start angle = -150, end angle = 200, radius = 1];
    \draw (0.866*3,0.5*3) circle [radius=2pt];
    \draw (0.866*3,0.5*3) node[below] {$\zeta_{m_i}^{q'_i}$};
    \draw[->] (-0.5,0.866) arc [start angle = 120, end angle = 470, radius = 1];
    \draw[->] (-0.5,0.866) -- ++ (120:1);
    \draw[->] (-0.5*2,0.866*2) arc [start angle = -60, end angle = 290, radius = 1];
    \draw (-0.5*3,0.866*3) circle [radius=2pt];
    \draw (-0.5*3,0.866*3) node[below] {$\zeta_{m_i}^{q_i}$};
    \draw (3,0) node {$\tilde{C}^{(q'_1)\vee}$};
    \draw (0.2,3) node {$\tilde{C}^{(q_1)}$};
    \fill [black] (0.866,0.5) circle [radius=3pt];
  \end{tikzpicture}
  \caption{$n=1$; if $q_1<q'_1$ (left) (resp. $q_1>q'_1$ (right)), 
    $\arg (x_1)$ at $\bullet$ are same (resp. different).}\label{fig:intersection-1dim}
\end{figure}
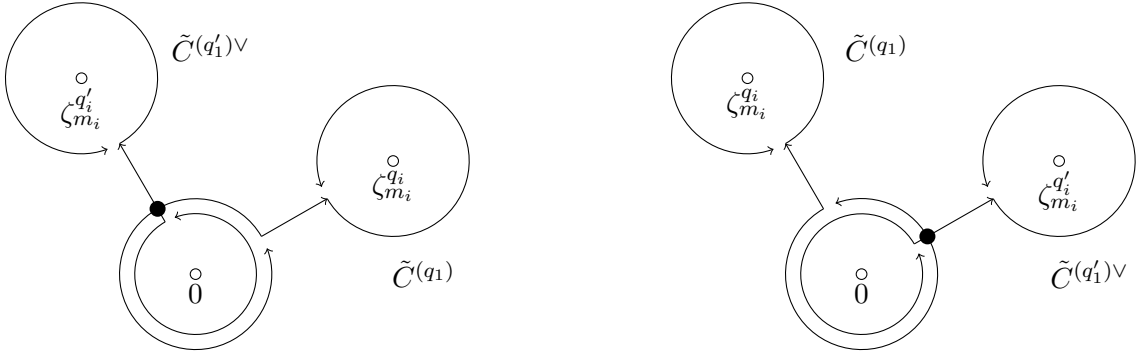
\end{proof}

\begin{prop}\label{prop:eigenvalue-diagonal}
  Let $\Ihgoto =\left( \langle \tC^{({\bf q})}, \tC^{({\bf q}')\vee}\rangle_h 
  \right)_{{\bf q}, {\bf q}' \in \Z^{n\times 1} /\Z \transp{D}} $
  be the intersection matrix. 
  For any fixed vector ${\bf k}\in \Z^{n\times 1} /\Z D=(\Z/m_1 \Z)\times \cdots \times (\Z/m_n \Z)$, 
  the column vector 
  $\left( e^{\tpi \transp{\tilde{\bf q}}D^{-1}{\bf k}}\right)_{\tilde{\bf q}\in \Z^{n\times 1} /\Z \transp{D}}$ 
  is an eigenvector of $\Ihgoto$ whose eigenvalue is 
  \begin{align}
    h_{D,{\bf k}}(\alpha_0 ,\alpha)
    &=\frac{1}{1-e^{\tpi \alpha_0}}
    \frac{1-e^{\tpi \alpha_0}\prod_{i=1}^n(e^{\tpi \frac{\alpha_i}{m_i}}\zeta_{m_i}^{k_i})}
    {\prod_{i=1}^n(1-e^{\tpi \frac{\alpha_i}{m_i}}\zeta_{m_i}^{k_i})}  \\
    &=\frac{1}{1-e^{\tpi \alpha_0 }}
      \frac{1-e^{\tpi \alpha_0}\prod_{i=1}^n e^{\tpi \transp{{\bf e}_i} D^{-1}(\alpha+{\bf k}) }}
      {\prod_{i=1}^n(1-e^{\tpi \transp{{\bf e}_i} D^{-1}(\alpha+{\bf k})} )}  .
  \end{align}
  Here, $\{{\bf e}_1,\dots,{\bf e}_n\}\subset\Z^{n\times 1}$ is the standard basis of $\Z^{n\times 1}$ and we have put $\zeta_m=e^{\frac{\tpi}{m}}$.
\end{prop}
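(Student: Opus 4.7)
The plan is to verify directly, by an explicit summation, that $\Ihgoto v = h_{D,{\bf k}}(\alpha_0,\alpha) v$ where $v = (e^{\tpi \transp{\tilde{\bf q}} D^{-1} {\bf k}})_{\tilde{\bf q}}$. The starting point is the closed formula
\begin{equation*}
\langle \tC^{({\bf q})}, \tC^{({\bf q}')\vee}\rangle_h
= \frac{1 - e^{\tpi \alpha_0}\prod_{i\in I({\bf q},{\bf q}')} e^{\tpi \alpha_i}}{(1-e^{\tpi \alpha_0})\prod_{i=1}^n(1-e^{\tpi \alpha_i})} \prod_{i=1}^n e^{\tpi \alpha_i \{(q'_i - q_i)/m_i\}},
\end{equation*}
where the product over $i \notin I$ has been harmlessly extended to all $i$ since $\{0\} = 0$. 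The entire entry then depends on ${\bf q}'$ only through the pair $(q_i, q_i')$ for each $i$ separately, provided we split the coupling term $1 - e^{\tpi \alpha_0}\prod_{i \in I} e^{\tpi \alpha_i}$ additively as $1 - e^{\tpi \alpha_0}\prod_{i \in I} e^{\tpi \alpha_i}$.

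For the sum $(\Ihgoto v)_{\bf q} = \sum_{{\bf q}'} \langle \tC^{({\bf q})},\tC^{({\bf q}')\vee}\rangle_h e^{\tpi \transp{{\bf q}'} D^{-1} {\bf k}}$, I would exploit this additive split: the first term contributes $\prod_i T_i$ with
\begin{equation*}
T_i := \sum_{q_i' = 0}^{m_i - 1} e^{\tpi \alpha_i \{(q_i' - q_i)/m_i\}} e^{\tpi k_i q_i'/m_i},
\end{equation*}
and the second term contributes $-e^{\tpi \alpha_0}\prod_i U_i$, where $U_i$ is the same sum except that the single summand at $q_i' = q_i$ has its weight $1$ replaced by $e^{\tpi \alpha_i}$. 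Both $T_i$ and $U_i$ are ultimately geometric sums after the change of index $j \equiv q_i' - q_i \pmod{m_i}$, which turns them into $e^{\tpi k_i q_i/m_i}\sum_{j=0}^{m_i-1} e^{\tpi(\alpha_i + k_i)j/m_i}$ (plus a correction for $U_i$); evaluating gives
\begin{equation*}
T_i = e^{\tpi k_i q_i/m_i}\cdot\frac{1 - e^{\tpi \alpha_i}}{1 - e^{\tpi \alpha_i/m_i}\zeta_{m_i}^{k_i}}, \qquad U_i = T_i + (e^{\tpi \alpha_i} - 1) e^{\tpi k_i q_i/m_i}.
\end{equation*}
A short simplification gives $U_i = e^{\tpi \alpha_i/m_i}\zeta_{m_i}^{k_i}\cdot T_i$.

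Substituting back, the factor $\prod_i (1 - e^{\tpi \alpha_i})$ in the denominator cancels the numerators of $\prod_i T_i$, while the relation $U_i = e^{\tpi \alpha_i/m_i}\zeta_{m_i}^{k_i} T_i$ makes the numerator collapse to exactly $1 - e^{\tpi \alpha_0}\prod_i e^{\tpi \alpha_i/m_i}\zeta_{m_i}^{k_i}$. The common prefactor $\prod_i e^{\tpi k_i q_i/m_i} = e^{\tpi \transp{\bf q} D^{-1} {\bf k}}$ factors out, which exhibits $v$ as an eigenvector and produces precisely the stated eigenvalue. The main obstacle is organizational rather than computational: one must see that the coupling between the indices $i$ induced by the set $I({\bf q},{\bf q}')$ disappears once the numerator is split into two pieces that individually factor as products over $i$. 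Once that split is made, everything reduces to one-variable geometric series, and the identity $U_i = e^{\tpi \alpha_i/m_i}\zeta_{m_i}^{k_i}T_i$ is the only piece of arithmetic that needs checking.
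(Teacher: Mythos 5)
Your proposal is correct, and it takes a genuinely different route from the paper. The paper proves Proposition \ref{prop:eigenvalue-diagonal} by induction on $n$: for $n=1$ it observes that the intersection matrix is a circulant matrix $\frac{1}{(1-e^{\tpi\alpha_0})(1-e^{\tpi\alpha_1})}\bigl((1-e^{\tpi(\alpha_0+\alpha_1)})E_{m_1}+(1-e^{\tpi\alpha_0})\sum_q e^{\tpi\alpha_1/m_1}Y_{m_1}^q\bigr)$, and for general $n$ it peels off one coordinate via a Kronecker-product decomposition $E_{m_n}\otimes I_1+\bigl(\sum_q e^{\tpi\alpha_n/m_n}Y_{m_n}^q\bigr)\otimes I_2$, feeding the $(n-1)$-dimensional eigenvalue into the $1$-dimensional one. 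You instead sum $\sum_{\mathbf{q}'}\langle\tC^{(\mathbf{q})},\tC^{(\mathbf{q}')\vee}\rangle_h\, e^{\tpi\transp{\mathbf{q}'}D^{-1}\mathbf{k}}$ directly, splitting the numerator $1-e^{\tpi\alpha_0}\prod_{i\in I}e^{\tpi\alpha_i}$ into two pieces; the key observation is that both resulting sums factor coordinate-wise because $I(\mathbf{q},\mathbf{q}')$ is determined coordinate by coordinate, which turns everything into one-variable geometric series. I verified the identities: the change of index $j\equiv q_i'-q_i\pmod{m_i}$ yields $T_i=e^{\tpi k_iq_i/m_i}(1-e^{\tpi\alpha_i})/(1-e^{\tpi\alpha_i/m_i}\zeta_{m_i}^{k_i})$ (using $e^{\tpi k_i}=1$), and $U_i=T_i+(e^{\tpi\alpha_i}-1)e^{\tpi k_iq_i/m_i}$ simplifies to $e^{\tpi\alpha_i/m_i}\zeta_{m_i}^{k_i}T_i$. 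Substituting back gives exactly $h_{D,\mathbf{k}}(\alpha_0,\alpha)\,e^{\tpi\transp{\mathbf{q}}D^{-1}\mathbf{k}}$. The one point you gloss over is the extension of $\{(q_i'-q_i)/m_i\}$ to the case $q_i=q_i'$: the paper's piecewise definition is silent there, but your choice $\{0\}=0$ (fractional part) is the one that makes the extended product and the definition of $U_i$ consistent, and it is the right one. Your direct computation is more elementary and self-contained; the paper's inductive argument exposes the Kronecker tensor structure of the intersection matrix, which is conceptually informative but not needed for the eigenvalue claim.
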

\begin{proof}
  We also write $I_D (\alpha_0, \alpha_1 ,\dots ,\alpha_n)$ for 
  the intersection matrix $\Ihgoto$. 
  We prove the proposition by induction on $n$. 
  First, we assume $n=1$ and ${\bf k}=(k_1)$. 
  The intersection matrix is a circulant matrix expressed as
  \begin{align}
    I_{(m_1)}(\alpha_0, \alpha_1) =\frac{1}{(1-e^{\tpi \alpha_0})(1-e^{\tpi \alpha_1})}
    \left( (1-e^{\tpi (\alpha_0+\alpha_1)})E_{m_1} +
    (1-e^{\tpi \alpha_0})\sum_{q=0}^{m_1-1} e^{\tpi \frac{\alpha_1}{m_1}} Y_{m_1}^q \right), 
  \end{align}
  where $E_{m}$ is the identity matrix of size $m$ and $Y_{m}$ is an $m \times m$ matrix defined by 
  \begin{align}
    Y_{m} =
    \begin{pmatrix}
      0 & 1 & 0 & \cdots & 0 \\
      0 & 0 & 1 & \ddots &\vdots \\
      \vdots &  & \ddots & \ddots & 0\\
      0 &0 &\cdots& 0 &1 \\
      1 & 0 & \cdots & 0 &0
    \end{pmatrix}.
  \end{align}
  The column vector $\left( e^{\tpi \frac{k_1 \tilde{q}_1}{m_1}}\right)_{\tilde{q}_1 \in \Z /m_1 \Z}
  =\left( \zeta_{m_1}^{k_1 \tilde{q}_1}\right)_{\tilde{q}_1 \in \Z /m_1 \Z}$ is 
  an eigenvector of $I_{(m_1)}(\alpha_0, \alpha_1)$ whose eigenvalue is 
  \begin{align}
    &\frac{1}{(1-e^{\tpi \alpha_0})(1-e^{\tpi \alpha_1})}
      \left( (1-e^{\tpi (\alpha_0+\alpha_1)}) +
      (1-e^{\tpi \alpha_0})\sum_{q=0}^{m_1-1} e^{\tpi \frac{\alpha_1}{m_1}} \zeta_{m_1}^{k_1 q} \right) \\
    &=\frac{1}{1-e^{\tpi \alpha_0}} \cdot 
      \frac{1-e^{\tpi \alpha_0}e^{\tpi \frac{\alpha_1}{m_1}}}{1-e^{\tpi \frac{\alpha_1}{m_1}} \zeta_{m_1}^{k_1}}
      =h_{(m_1),(k_1)}(\alpha_0,\alpha_1) .
  \end{align}
  Next, let us assume that the proposition holds for $1,\dots ,n-1$. 
  We set $D'=\diag(m_1 ,\dots ,m_{n-1})$, ${\bf k}'=\transp{(k_1 ,\dots ,k_{n-1})}$,
  $I_1 = (1-e^{\tpi (\alpha_0+\alpha_n)})\cdot  I_{D'} (\alpha_0+\alpha_n , \alpha_1 ,\dots ,\alpha_{n-1})$, and 
  $I_2 = (1-e^{\tpi \alpha_0})\cdot  I_{D'} (\alpha_0 , \alpha_1 ,\dots ,\alpha_{n-1})$.
  The intersection matrix is expressed as 
  \begin{align}
    I_D (\alpha_0, \alpha_1 ,\dots ,\alpha_n)
    =\frac{1}{(1-e^{\tpi \alpha_0})(1-e^{\tpi \alpha_n})}
    \left( E_{m_n}\ot I_1 
    +\left(\sum_{q=0}^{m_n-1} e^{\tpi \frac{\alpha_n}{m_n}} Y_{m_n}^q \right) \ot I_2\right) . 
  \end{align}
  Since the column vector 
  $\left( e^{\tpi \transp{\tilde{\bf q}}D^{-1}{\bf k}}\right)_{\tilde{\bf q}\in \Z^{n\times 1} /\Z \transp{D}}$
  can be regarded as a tensor product 
  $\left( e^{\tpi \frac{k_n \tilde{q}_n}{m_n}}\right)_{\tilde{q}_n \in \Z /m_n \Z}
  \ot \left( e^{\tpi \transp{\tilde{\bf q}'}{D'}^{-1}{\bf k}'}\right)_{\tilde{\bf q}'\in \Z^{n\times 1} /\Z \transp{D'}}$, 
  this is an eigenvector of $I_D (\alpha_0, \alpha_1 ,\dots ,\alpha_n)$ whose eigenvalue is 
  \begin{align}
    \nonumber
    &\frac{1}{(1-e^{\tpi \alpha_0})(1-e^{\tpi \alpha_n})}
      \Bigg( (1-e^{\tpi (\alpha_0+\alpha_n)})
      \cdot  h_{D',{\bf k}'} (\alpha_0+\alpha_n , \alpha_1 ,\dots ,\alpha_{n-1}) \\
    &\qquad \qquad 
      +\Big(\sum_{q=0}^{m_n-1} e^{\tpi \frac{\alpha_n}{m_n}} \zeta_{m_n}^{k_n q} \Big) 
      \cdot (1-e^{\tpi \alpha_0})\cdot  h_{D',{\bf k}'} (\alpha_0 , \alpha_1 ,\dots ,\alpha_{n-1})\Bigg)  \\
    &= h_{D,{\bf k}}(\alpha_0,\alpha_1,\dots ,\alpha_n) . 
  \end{align}
  Thus, we complete the proof. 
\end{proof}

By a straightforward calculation, we obtain the following corollary. 
\begin{cor}
  Let $l$ be the least common multiple of $m_1 ,\dots , m_n$. We have 
  \begin{align*}
    \det \Ihgoto  =\frac{\left( 1 - e^{\tpi l \alpha_0}
    \prod_{i=1}^n e^{\tpi \frac{l \alpha_i}{m_i}}\right)^{\frac{m_1\cdots m_n}{l}}}
    {(1 - e^{\tpi \alpha_0})^{m_1\cdots m_n} 
    \prod_{i=1}^n (1 - e^{\tpi \alpha_i})^{\frac{m_1\cdots m_n}{m_i}}} .
  \end{align*}
  In particular,
  $\{ [\tC^{({\bf q})} ] \}_{{\bf q}\in \Z^{n\times 1} /\Z \transp{D}} \subset \Homo_n(\tX ;p_D^{-1}\mathcal{L})$ 
  are linearly independent if 
  none of 
  $\alpha_0,\alpha_1 ,\dots ,\alpha_n$ and 
  $\lcm(m_1,\dots m_n) \cdot\left(
    \alpha_0+\frac{\alpha_1}{m_1}+\cdots +\frac{\alpha_n}{m_n}
  \right)$ is an integer.
\end{cor}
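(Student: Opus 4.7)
The plan is to diagonalize $\Ihgoto$ using Proposition \ref{prop:eigenvalue-diagonal} and then evaluate the product of eigenvalues via cyclotomic identities. First, I would observe that the column vectors $v_{\bf k} := \bigl(e^{\tpi\transp{\tilde{\bf q}}D^{-1}{\bf k}}\bigr)_{\tilde{\bf q}\in\Z^{n\times 1}/\Z\transp{D}}$, indexed by ${\bf k}\in\Z^{n\times 1}/\Z D$, form a basis of $\C^{m_1\cdots m_n}$: the matrix whose columns are the $v_{\bf k}$ factors as the tensor product of the $n$ discrete Fourier matrices of sizes $m_1,\dots,m_n$, each of which is a Vandermonde matrix on roots of unity and hence invertible. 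Combined with Proposition \ref{prop:eigenvalue-diagonal}, this provides a complete diagonalization, so
\begin{equation*}
\det \Ihgoto = \prod_{{\bf k}\in\Z^{n\times 1}/\Z D} h_{D,{\bf k}}(\alpha_0,\alpha).
\end{equation*}

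Next I would handle the numerator and denominator of $h_{D,{\bf k}}$ separately. For the denominator, the product
$\prod_{\bf k}\bigl[(1-e^{\tpi\alpha_0})\prod_{i=1}^n(1-e^{\tpi\alpha_i/m_i}\zeta_{m_i}^{k_i})\bigr]$
factors over coordinates, and applying $\prod_{k=0}^{m-1}(1-a\zeta_m^k)=1-a^m$ in each coordinate gives exactly $(1-e^{\tpi\alpha_0})^{m_1\cdots m_n}\prod_{i=1}^n(1-e^{\tpi\alpha_i})^{m_1\cdots m_n/m_i}$. For the numerator, set $a:=e^{\tpi\alpha_0}\prod_i e^{\tpi\alpha_i/m_i}$ and consider the homomorphism $\phi:\Z^{n\times 1}/\Z D\to\Q/\Z$ defined by ${\bf k}\mapsto\sum_i k_i/m_i\bmod\Z$. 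Its image is the subgroup generated by $1/m_1,\dots,1/m_n$, which equals the cyclic group $\tfrac{1}{l}\Z/\Z$ of order $l$; by the first isomorphism theorem each value $\zeta_l^j$ of $\prod_i\zeta_{m_i}^{k_i}$ is attained with multiplicity $m_1\cdots m_n/l$. Therefore
\begin{equation*}
\prod_{\bf k}\Bigl(1-a\prod_{i=1}^n\zeta_{m_i}^{k_i}\Bigr) = \prod_{j=0}^{l-1}(1-a\zeta_l^j)^{m_1\cdots m_n/l} = (1-a^l)^{m_1\cdots m_n/l},
\end{equation*}
and $a^l = e^{\tpi l\alpha_0}\prod_i e^{\tpi l\alpha_i/m_i}$, matching the asserted formula. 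Dividing gives the claimed determinant, and the linear independence assertion then follows because the hypothesis on $\alpha_0,\alpha_1,\dots,\alpha_n$ and $l(\alpha_0+\sum_i\alpha_i/m_i)$ forces every factor in the numerator and denominator to be finite and nonzero, so $\det \Ihgoto \neq 0$.

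The only mildly subtle step is the group-theoretic count: correctly identifying the image of $\phi$ as the cyclic group of order $l=\lcm(m_1,\dots,m_n)$ and deducing the kernel has order $m_1\cdots m_n/l$. Once this is in hand, both the determinant formula and the independence criterion fall out by direct substitution, so I do not expect any serious technical obstacle beyond this bookkeeping.
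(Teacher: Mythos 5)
Your proof is correct, and since the paper dismisses this corollary with ``By a straightforward calculation,'' your argument supplies exactly the calculation the authors have in mind. The diagonalization via the tensor-product DFT matrix (i.e., the unitary matrix $U$ that the paper uses explicitly in the corollary following Proposition \ref{prop:DiagonalizationTheorem1}) gives $\det \Ihgoto = \prod_{\bf k} h_{D,{\bf k}}(\alpha_0,\alpha)$; the denominator factorizes coordinatewise using $\prod_{k=0}^{m-1}(1-a\zeta_m^k)=1-a^m$; and your identification of the image of $\phi:\Z^{n\times 1}/\Z D\to\Q/\Z$, ${\bf k}\mapsto \sum_i k_i/m_i$, as $\tfrac{1}{l}\Z/\Z$ with $l=\lcm(m_1,\dots,m_n)$ (each fiber of size $m_1\cdots m_n/l$) is the right bookkeeping for the numerator. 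The nonvanishing statement then follows immediately since, under the standing hypothesis $\alpha_0,\dots,\alpha_n\notin\Z$, the extra condition $l(\alpha_0+\sum_i\alpha_i/m_i)\notin\Z$ is precisely the condition that $a^l\neq 1$.
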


\noindent
In view of Proposition \ref{prop:purity-dim}, a set 
$\{ [\tC^{({\bf q})} ]\}_{{\bf q}\in \Z^{n\times 1} /\Z \transp{D}}$ forms a basis of $\Homo_n(\tX ;p_D^{-1}\mathcal{L})$. 

\subsection{General cases}\label{subsection:not-diagonal}
Now let us consider the general case when $M$ is not diagonal. 
We choose the diagonal matrix $D=\diag (m_1 ,\dots ,m_n)$ so that 
the auxiliary matrix $\bM=DM^{-1}$ is an integer matrix. 
We consider a covering map 
$p_D : \bX \to \X$ where
$\bX =\{ (y_1,\dots ,y_n) \in \C^n \mid y_1 \cdots y_n (1-y_1^{m_1} -\cdots -y_n^{m_n}) \neq 0\}$ and 
$p_D(y_1 ,\dots ,y_n)=(y_1^{m_1},\dots,y_n^{m_n})$,
which is nothing but that discussed in \S\ref{subsection:diagonal}. 
We have non-splitting exact sequences
\begin{align}
  &0\rightarrow \Z^{n\times 1}/\Z M \overset{\bM \times}{\rightarrow}\Z^{n\times 1}/\Z D
    \rightarrow \Z^{n\times 1}/\Z\bM \rightarrow 0 \qquad \textrm{(exact)} ,  \\
  \label{eq:exact-seq-MDMtr}
  &0\rightarrow \Z^{n\times 1}/\Z \transp{\bM} \overset{\transp{M} \times}{\rightarrow}\Z^{n\times 1}/\Z \transp{D}
    \rightarrow \Z^{n\times 1}/\Z \transp{M} \rightarrow 0 \qquad \textrm{(exact)} ,
\end{align}
and the corresponding commutative diagram of Galois coverings 
\begin{align}
  \xymatrix{
  && \bX \ar[ld]_-{p_{\bM}} \ar[ddl]^-{p_D}  \ar@{}[r]|*[r]{\subset (\C^*)^n_y} &   \\
  &\tX \ar[d]_-{p_M}  \ar@{}[l]|*[l]{(\C^*)^n_x \supset} & \\
  & X  \ar@{}[l]|*[l]{(\C^*)^n_{\xi} \supset} & 
  } 
\end{align}
where $p_{\bM} :\bX \to \tX$ is defined by $p_{\bM} (y) =y^{\bM}$. 
The deck transformation group of $p_{\bM} :\bX \to \tX$ is isomorphic to $\Z^{n\times 1}/\Z \transp{\bM}$ and 
for $\tilde{\bf l} \in \Z^{n\times 1}/\Z \transp{\bM}$, 
the corresponding deck transformation is given by 
$g_1^{(\tilde{\bf l})}(y)=e^{\tpi \transp{\bM}^{-1} \tilde{\bf l}}y$. 

By the method in \S\ref{subsection:diagonal}, we can construct the lifts 
$\{ \bC^{({\bf q})}\}_{{\bf q}\in \Z^{n\times 1}/\Z \transp{D}} \subset \mathcal{Z}_n(\bX ;p_D^{-1}\mathcal{L})$ 
of $C$. 
\begin{lem}
  \label{lem:lift-well-def}
  For ${\bf q}\in \Z^{n\times 1}/\Z \transp{D}$, $\tilde{\bf l} \in \Z^{n\times 1}/\Z \transp{\bM}$, 
  we have $(p_{\bM})_{*} (\bC^{({\bf q})})=(p_{\bM})_{*} (\bC^{({\bf q}+\transp{M}\tilde{\bf l})})$. 
\end{lem}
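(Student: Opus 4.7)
The plan is to exhibit $\bC^{({\bf q}+\transp M \tilde{\bf l})}$ as the image of $\bC^{({\bf q})}$ under a deck transformation of $p_{\bM}$, so that the pushforward $(p_{\bM})_*$ kills the difference. More precisely, I would find a self-map $g:\bX\to\bX$ with $p_{\bM}\circ g = p_{\bM}$ (hence $(p_{\bM})_*g_*=(p_{\bM})_*$) such that $g_*\bC^{({\bf q})}=\bC^{({\bf q}+\transp M\tilde{\bf l})}$ as twisted chains. The lemma is then immediate.

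Step 1 (identification of deck transformations). The exact sequence (\ref{eq:exact-seq-MDMtr}) includes $\Z^{n\times 1}/\Z\transp{\bM}$ into $\Z^{n\times 1}/\Z\transp{D}$ by $\tilde{\bf l}\mapsto \transp M \tilde{\bf l}$. Under the identification of these quotients with the Galois groups of $p_{\bM}$ and $p_D$, the inclusion is realized by the common self-map $g:=g_1^{(\tilde{\bf l})}=g^{(\transp M\tilde{\bf l})}$ of $\bX$. Indeed, since $\bM=DM^{-1}$ implies $\transp{\bM}^{-1}=D^{-1}\transp M$, the two prescriptions $g_1^{(\tilde{\bf l})}(y)=e^{\tpi \transp{\bM}^{-1}\tilde{\bf l}}y$ and $g^{(\transp M\tilde{\bf l})}(y)=e^{\tpi D^{-1}\transp M\tilde{\bf l}}y$ coincide. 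In particular $p_{\bM}\circ g=p_{\bM}$.

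Step 2 (compatibility with the lifts). I would show $g_*\bC^{({\bf q})}=\bC^{({\bf q}+\transp M\tilde{\bf l})}$ by checking two things. First, at the level of underlying cells, $g$ sends the distinguished point $(\sqrt[m_i]{\varepsilon}e^{\tpi q_i/m_i})_i$ of $\tilde{\triangle}^{({\bf q})}$ to $(\sqrt[m_i]{\varepsilon}e^{\tpi (q_i+(\transp M\tilde{\bf l})_i)/m_i})_i$, which is the distinguished point of $\tilde{\triangle}^{({\bf q}+\transp M\tilde{\bf l})}$; by the same calculation on the tubes and faces, $g$ carries every cell appearing in the construction (\ref{eq:def-lifted-cycle}) of $\bC^{({\bf q})}$ to the corresponding cell of $\bC^{({\bf q}+\transp M\tilde{\bf l})}$. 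Second, at the level of the local-system coefficient: the section on $g_*\tilde{\triangle}^{({\bf q})}$ obtained by pushforward agrees, at the distinguished point, with the section on $\tilde{\triangle}^{({\bf q}+\transp M\tilde{\bf l})}$ used to build $\bC^{({\bf q}+\transp M\tilde{\bf l})}$ up to the constant factor $e^{-\tpi \transp{(\transp M\tilde{\bf l})}D^{-1}\alpha}$. Comparing this with the normalizing prefactors $1/e^{\tpi \transp{\bf q}D^{-1}\alpha}$ and $1/e^{\tpi \transp{({\bf q}+\transp M\tilde{\bf l})}D^{-1}\alpha}$ in (\ref{eq:def-lifted-cycle}) shows that the two prefactors differ by exactly $e^{-\tpi \transp{(\transp M\tilde{\bf l})}D^{-1}\alpha}$, which cancels the phase produced by the deck action. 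Hence $g_*\bC^{({\bf q})}=\bC^{({\bf q}+\transp M\tilde{\bf l})}$.

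Conclusion and obstacle. Combining the two steps, $(p_{\bM})_*\bC^{({\bf q}+\transp M\tilde{\bf l})}=(p_{\bM})_*g_*\bC^{({\bf q})}=(p_{\bM}\circ g)_*\bC^{({\bf q})}=(p_{\bM})_*\bC^{({\bf q})}$, which is the claim. The only delicate point is Step 2, namely tracking how the branch of $\tu$ is permuted by $g$ and verifying that the normalizing prefactor in (\ref{eq:def-lifted-cycle}) was designed precisely to absorb this phase; once the matrix identity $\transp{\bM}^{-1}=D^{-1}\transp M$ is written down, this bookkeeping is direct.
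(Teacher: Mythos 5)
Your proposal is correct and is essentially the same argument as the paper's: the paper also uses the deck transformation $g_1^{(\tilde{\bf l})}$ and the phase identity $\bu_2\circ g_1^{(\tilde{\bf l})}=e^{\tpi \transp{\tilde{\bf l}}\alpha}\,\bu_1$, differing only in that you package the computation as a chain-level identity $g_*\bC^{({\bf q})}=\bC^{({\bf q}+\transp{M}\tilde{\bf l})}$ followed by functoriality of $(p_{\bM})_*$, while the paper directly compares the two pushforwards. (One small notational slip: since the local system on $\bX$ corresponds to the exponent $\bM\alpha$, the constant factor in Step 2 is really $e^{-\tpi\transp{(\transp{M}\tilde{\bf l})}D^{-1}\bM\alpha}=e^{-\tpi\transp{\tilde{\bf l}}\alpha}$, not $e^{-\tpi\transp{(\transp{M}\tilde{\bf l})}D^{-1}\alpha}$; the cancellation you describe goes through nevertheless.)
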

\begin{proof}
  By the definition (\ref{eq:def-lifted-cycle}) of $\bC^{({\bf q})}$, we have 
  \begin{align}
    (p_{\bM})_{*} (\bC^{({\bf q})}) 
    &=\frac{1}{e^{\tpi \transp{\bf q} M^{-1} \alpha  }}
      \big( p_{\bM} (\tilde{\triangle}^{({\bf q})}) 
      \ot \bu_1 \circ (p_{\bM}|_{{\triangle}^{({\bf q})}})^{-1} +\cdots \big) ,\\
    (p_{\bM})_{*} (\bC^{({\bf q}+\transp{M}\tilde{\bf l})}) 
    &=\frac{1}{e^{\tpi (\transp{\bf q} M^{-1}\alpha + \transp{\tilde{\bf l}} \alpha) }}
      \big( p_{\bM} (\tilde{\triangle}^{({\bf q}+\transp{M}\tilde{\bf l})}) 
      \ot \bu_2 \circ (p_{\bM}|_{{\triangle}^{({\bf q}+\transp{M}\tilde{\bf l})}})^{-1} +\cdots \big) ,
  \end{align}
  where the branches $\bu_1$ and $\bu_2$ of 
  $\bu (y) =u(p_D(y))=y^{\bM \alpha} (1-\sum_{i=1}^n y_i^{m_i})^{\alpha_0}$ are defined 
  in the manner of \S\ref{subsection:diagonal}. 
  Note that we have replaced $\alpha$ in (\ref{eq:def-lifted-cycle}) with $\bM \alpha$. 
  The supports of these twisted cycles are same; for example, 
  $p_{\bM} (\tilde{\triangle}^{({\bf q})}) =p_{\bM} (\tilde{\triangle}^{({\bf q}+\transp{M}\tilde{\bf l})})$ 
  as sets. 
  Since $p_{\bM}|_{{\triangle}^{({\bf q}+\transp{M}\tilde{\bf l})}} 
  =p_{\bM}|_{{\triangle}^{({\bf q})}} \circ (g_1^{(\tilde{\bf l})})^{-1}$ and 
  $\bu_2 \circ g_1^{(\tilde{\bf l})}=e^{\tpi \transp{(\transp{\bM}^{-1} \tilde{\bf l})}\bM \alpha} \cdot \bu_1
  =e^{\tpi \transp{\tilde{\bf l}} \alpha} \cdot \bu_1$, we have
  \begin{align}
    \bu_2 \circ (p_{\bM}|_{{\triangle}^{({\bf q}+\transp{M}\tilde{\bf l})}})^{-1}
    =\bu_2 \circ g_1^{(\tilde{\bf l})} \circ (p_{\bM}|_{{\triangle}^{(\tilde{\bf q})}})^{-1}
    =e^{\tpi \transp{\tilde{\bf l}} \alpha} \cdot \bu_1 \circ (p_{\bM}|_{{\triangle}^{(\tilde{\bf q})}})^{-1} . 
  \end{align}
  This implies $(p_{\bM})_{*} (\bC^{({\bf q})})=(p_{\bM})_{*} (\bC^{({\bf q}+\transp{M}\tilde{\bf l})})$.
\end{proof}
We define twisted cycles 
$\{ \tC^{(\tilde{\bf k})}\}_{\tilde{\bf k}\in \Z^{n\times 1}/\Z \transp{M}} \subset \mathcal{Z}_n(\tX ;p_M^{-1}\mathcal{L})$ by 
$\tC^{(\tilde{\bf k})} =(p_{\bM})_{*} (\bC^{(\tilde{\bf k})})$, 
where we also regard $\tilde{\bf k}$ as a representative of $\Z^{n\times 1}/\Z \transp{D}$
via the surjection $\Z^{n\times 1}/\Z \transp{D} \to \Z^{n\times 1}/\Z \transp{M}$ in (\ref{eq:exact-seq-MDMtr}). 
By Lemma \ref{lem:lift-well-def}, this definition is independent of the choice of the representative 
$\tilde{\bf k} \in \Z^{n\times 1}/\Z \transp{D}$. 
Since the lifts of $\tC^{(\tilde{\bf k})}$ with respect to $p_{\bM}$ is given by 
$\{ \bC^{(\tilde{\bf k}+\transp{M}\tilde{\bf l})} \}_{\tilde{\bf l} \in \Z^{n\times 1}/\Z \transp{\bM}}$, 
we can evaluate the intersection number $\langle \tC^{(\tilde{\bf k})}, \tC^{(\tilde{\bf k}')\vee}\rangle_h$ 
by Proposition \ref{prop:lift-intersection}:
\begin{align}
  \label{eq:intersection-number-sum-ver}
  \langle \tC^{(\tilde{\bf k}_1)} , \tC^{(\tilde{\bf k}_2)\vee} \rangle_h
  =\sum_{\tilde{\bf l} \in \Z^{n\times 1}/\Z \transp{\bM}}
  \langle \bC^{(\tilde{\bf k}_1)} , \bC^{(\tilde{\bf k}_2+\transp{\bM} \tilde{\bf l})\vee}\rangle_h .
\end{align}
Let us set $\Ihgoto =\left(\langle \tC^{(\tilde{\bf k}_1)}, \tC^{(\tilde{\bf k}_2)\vee} \rangle_h \right)_{
\tilde{\bf k}_1,\tilde{\bf k}_2\in\Z^{n\times 1}/\Z \transp{M}}$. Note that the columns and the rows of the matrix $\Ihgoto$ is indexed by the set $\Z^{n\times 1}/\Z \transp{M}$.
\begin{prop}\label{prop:DiagonalizationTheorem1}
  For each fixed ${\bf k}\in\Z^{n\times 1}/\Z M$, the column vector 
  $\left( e^{\tpi \transp{\tilde{\bf k}}M^{-1}{\bf k}} \right)_{\tilde{\bf k} \in \Z^{n\times 1}/\Z \transp{M}}$ 
  is an eigenvector of $\Ihgoto$ whose eigenvalue is 
  \begin{align}
    h_{M,{\bf k}}(\alpha)=\frac{1}{1-e^{\tpi \alpha_0}} \cdot 
    \frac{1-e^{\tpi \alpha_0} \prod_{i=1}^n e^{\tpi p_i(\alpha+{\bf k})}}
    {\prod_{i=1}^n (1-e^{\tpi p_i(\alpha+{\bf k})})},
  \end{align}
  where for $v\in \C^{n\times 1}$, $p_i(v)$ denotes the $i$-th entry of $M^{-1}v$. 
\end{prop}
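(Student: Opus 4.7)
The strategy is to reduce the general case to the diagonal case treated in Proposition~\ref{prop:eigenvalue-diagonal}, using the intermediate covering $p_{\bar M}\colon\bar X\to\tilde X$ together with the short exact sequence (\ref{eq:exact-seq-MDMtr}). Indeed, (\ref{eq:intersection-number-sum-ver}) combined with Lemma~\ref{lem:lift-well-def} expresses each entry of $\Ihgoto$ as a sum of entries of the diagonal intersection matrix $\bIhgoto$ running over a fibre of the canonical surjection $\Z^{n\times1}/\Z\transp{D}\twoheadrightarrow\Z^{n\times1}/\Z\transp{M}$. Accordingly, one expects each eigenvector of $\Ihgoto$ to be obtained by descending a suitable eigenvector of $\bIhgoto$ along this surjection.

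Fix ${\bf k}\in\Z^{n\times1}/\Z M$. I would lift the candidate eigenvector $v=(e^{\tpi\transp{\tilde{\bf k}}M^{-1}{\bf k}})_{\tilde{\bf k}\in\Z^{n\times1}/\Z\transp{M}}$ to the function $\tilde v=(e^{\tpi\transp{\tilde{\bf q}}M^{-1}{\bf k}})_{\tilde{\bf q}\in\Z^{n\times1}/\Z\transp{D}}$ on the larger quotient. Since $M^{-1}=D^{-1}\bar M$ and $\bar M{\bf k}\in\Z^{n\times1}$, the function $\tilde v$ is well-defined on $\Z^{n\times1}/\Z\transp{D}$; it coincides with the character of Proposition~\ref{prop:eigenvalue-diagonal} associated with the class of $\bar M{\bf k}$ in $\Z^{n\times1}/\Z D$, and by construction it is constant along the fibres of $\Z^{n\times1}/\Z\transp{D}\twoheadrightarrow\Z^{n\times1}/\Z\transp{M}$, where it descends to $v$.

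Next, I would compute $(\Ihgoto v)_{\tilde{\bf k}_1}$ by combining the two sums over $\tilde{\bf k}_2$ and $\tilde{\bf l}$ into a single sum over $\tilde{\bf q}\in\Z^{n\times1}/\Z\transp{D}$ via the bijection afforded by (\ref{eq:exact-seq-MDMtr}). Writing $\tilde{\bf q}=\tilde{\bf k}_2+\transp{M}\tilde{\bf l}$, the substitution produces a factor $e^{-\tpi\transp{\tilde{\bf l}}{\bf k}}=1$ thanks to the integrality of $\transp{\tilde{\bf l}}{\bf k}$, so that $v_{\tilde{\bf k}_2}=\tilde v_{\tilde{\bf q}}$ along the fibre. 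This collapses the expression to $(\Ihgoto v)_{\tilde{\bf k}_1}=(\bIhgoto\tilde v)_{\tilde{\bf k}_1}$. Applying Proposition~\ref{prop:eigenvalue-diagonal} on $\bar X$ with parameters $(\alpha_0,\bar M\alpha)$ and label $\bar M{\bf k}$ yields $\bIhgoto\tilde v=h_{D,\bar M{\bf k}}(\alpha_0,\bar M\alpha)\tilde v$, and the identity $\transp{{\bf e}_i}D^{-1}\bar M=\transp{{\bf e}_i}M^{-1}=p_i$ identifies this eigenvalue with the claimed $h_{M,{\bf k}}(\alpha)$. Descending back from $\Z^{n\times1}/\Z\transp{D}$ to $\Z^{n\times1}/\Z\transp{M}$ gives $\Ihgoto v=h_{M,{\bf k}}(\alpha)\,v$.

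The main obstacle is purely combinatorial: tracking representatives through the exact sequence (\ref{eq:exact-seq-MDMtr}) to verify both that $\tilde v$ is the honest lift of $v$ and that the factor produced by the change of summation variable $(\tilde{\bf k}_2,\tilde{\bf l})\mapsto\tilde{\bf q}$ genuinely trivialises. Once this bookkeeping is performed, the proof is a direct application of the diagonal eigenvalue formula of Proposition~\ref{prop:eigenvalue-diagonal}.
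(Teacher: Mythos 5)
Your proposal is correct and follows essentially the same route as the paper's proof: both expand each entry of $\Ihgoto$ via (\ref{eq:intersection-number-sum-ver}) into a double sum, exploit the integrality of $\transp{\tilde{\bf l}}{\bf k}$ to collapse the double sum into a single sum over $\Z^{n\times1}/\Z\transp{D}$ against the lifted character, apply Proposition~\ref{prop:eigenvalue-diagonal} with parameters $(\alpha_0,\bM\alpha)$ and label $\bM{\bf k}$, and rewrite the eigenvalue using $D^{-1}\bM=M^{-1}$. The only stylistic difference is that you emphasize the lift/descent of eigenvectors along the surjection $\Z^{n\times1}/\Z\transp{D}\twoheadrightarrow\Z^{n\times1}/\Z\transp{M}$, whereas the paper phrases the same manipulation as a direct computation of the matrix–vector product.
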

\begin{proof}
In view of the formula (\ref{eq:intersection-number-sum-ver}), we have 
\begin{align}
  &(\tilde{\bf k}\text{-th entry of } \Ihgoto \cdot 
    \left( e^{\tpi \transp{\tilde{\bf k}_2} M^{-1}{\bf k}}\right)_{\tilde{\bf k}_2\in\Z^{n\times 1}/\Z \transp{M}}) \\
  &=\sum_{\tilde{\bf k}_2 \in \Z^{n\times 1}/\Z \transp{M}} 
    \langle \tC^{(\tilde{\bf k})} ,\tC^{(\tilde{\bf k}_2)\vee} \rangle_h 
    \cdot e^{\tpi \transp{\tilde{\bf k}_2} M^{-1}{\bf k}}\\
  &=\sum_{\tilde{\bf k}_2 \in\Z^{n\times 1}/\Z \transp{M}} ~
    \sum_{\tilde{\bf l} \in \Z^{n\times 1}/\Z \transp{\bM}} 
    \langle \bC^{(\tilde{\bf k})} ,\bC^{(\tilde{\bf k}_2+\transp{M} \tilde{\bf l})\vee}\rangle_h
    \cdot e^{\tpi \transp{\tilde{\bf k}_2} D^{-1} \bM {\bf k}}\\
  &=\sum_{\tilde{\bf k}_2 \in \Z^{n\times 1}/\Z \transp{M}} ~
    \sum_{\tilde{\bf l} \in \Z^{n\times 1}/\Z \transp{\bM}}
    \langle \bC^{(\tilde{\bf k})}, \bC^{(\tilde{\bf k}_2+\transp{M} \tilde{\bf l})\vee} \rangle_h
    \cdot e^{\tpi \transp{(\tilde{\bf k}_2+\transp{M} \tilde{\bf l})} D^{-1} \bM {\bf k}}\\
  \label{eq:proof-eigenvalue-general}
  &=(\tilde{\bf k}\text{-th entry of } \bIhgoto \cdot 
    \left( e^{\tpi \transp{\tilde{\bf l}} D^{-1} \bM{\bf k}}\right)_{\tilde{\bf l}\in\Z^{n\times 1}/\Z \transp{D}}).
\end{align}
Here, we have used the fact 
$\transp{(\transp{M} \tilde{\bf l})}D^{-1} \bM{\bf k}=\transp{\tilde{\bf l}}{\bf k}\in \Z$ 
and put $\bIhgoto =\left(\langle \bC^{(\tilde{\bf l}_1)}, \bC^{(\tilde{\bf l}_2)\vee}\rangle_h 
\right)_{\tilde{\bf l}_1,\tilde{\bf l}_2\in\Z^{n\times 1}/\Z \transp{D}}$. 
Since $p_D^{-1}\mathcal{L}=\C y^{\bM \alpha}(1-\sum_{i=1}^ny_i^{m_i})^{\alpha_0}$, 
Proposition \ref{prop:eigenvalue-diagonal} gives 
\begin{align*}
  \bIhgoto \cdot 
  \left( e^{\tpi \transp{\tilde{\bf k}} D^{-1} \bM{\bf k}}\right)_{\tilde{\bf k}\in\Z^{n\times 1}/\Z \transp{D}}
  =h_{D,\bM{\bf k}}(\alpha_0,\bM\alpha)
  \left( e^{\tpi \transp{\tilde{\bf k}} D^{-1} \bM {\bf k}}\right)_{\tilde{\bf k}\in\Z^{n\times 1}/\Z \transp{D}}.
\end{align*}
Thus, we have
\begin{align}
  (\text{\ref{eq:proof-eigenvalue-general}})
  &=h_{D,\bM {\bf k}}(\alpha_0,\bM \alpha)
    \cdot e^{\tpi \transp{\tilde{\bf k}} D^{-1} \bM {\bf k}} \\
  &=h_{D,DM^{-1}{\bf k}}(\alpha_0,DM^{-1}\alpha)
    \cdot e^{\tpi \transp{\tilde{\bf k}} M^{-1}{\bf k}}\\
  &=\frac{1}{1-e^{\tpi \alpha_0 }}
      \frac{1-e^{\tpi \alpha_0} \prod_{i=1}^n e^{\tpi \transp{{\bf e}_i} D^{-1}( DM^{-1}\alpha+DM^{-1}{\bf k})}}
      {\prod_{i=1}^n(1-e^{\tpi \transp{{\bf e}_i} D^{-1}( DM^{-1}\alpha+DM^{-1}{\bf k})} )} 
    \cdot e^{\tpi \transp{\tilde{\bf k}} M^{-1}{\bf k}} \\
  &=h_{M,{\bf k}}(\alpha)\cdot e^{\tpi \transp{\tilde{\bf k}} M^{-1}{\bf k}}.
\end{align}
\end{proof}

\begin{rem}
  The proposition above shows that the intersection matrix $\Ihgoto$ does not depend on 
  the choice of the covering indices $m_1,\dots,m_n$.
\end{rem}

\begin{cor}
  We have a closed formula
  \begin{align}
    \Ihgoto=\left(\frac{1}{|\det M|}\sum_{{\bf k}\in \Z^{n\times 1}/\Z M} 
    e^{\tpi \transp{(\tilde{\bf k}_1-\tilde{\bf k}_2)} M^{-1}{\bf k}} 
    h_{M,{\bf k}}(\alpha)\right)_{\tilde{\bf k}_1,\tilde{\bf k}_2\in\Z^{n\times 1}/\Z \transp{M}}.
  \end{align}
\end{cor}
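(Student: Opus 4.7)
The plan is to derive this closed formula as an immediate consequence of the spectral decomposition provided by Proposition~\ref{prop:DiagonalizationTheorem1}. The key observation is that the eigenvectors exhibited there are precisely the characters of the finite abelian group $\Z^{n\times 1}/\Z M$ under the perfect pairing
\begin{equation*}
\Z^{n\times 1}/\Z\transp{M}\times \Z^{n\times 1}/\Z M\ni(\tilde{\bf k},{\bf k})\mapsto e^{\tpi\transp{\tilde{\bf k}}M^{-1}{\bf k}}\in\C^*,
\end{equation*}
which is well-defined because replacing $\tilde{\bf k}$ by $\tilde{\bf k}+\transp{M}\tilde{\bf l}$ (resp.\ ${\bf k}$ by ${\bf k}+M{\bf l}$) changes the exponent by $\transp{\tilde{\bf l}}{\bf k}\in\Z$ (resp.\ $\transp{\tilde{\bf k}}{\bf l}\in\Z$).

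First I would introduce the matrix
\begin{equation*}
U:=\frac{1}{\sqrt{|\det M|}}\left(e^{\tpi\transp{\tilde{\bf k}}M^{-1}{\bf k}}\right)_{\tilde{\bf k}\in\Z^{n\times 1}/\Z\transp{M},\ {\bf k}\in\Z^{n\times 1}/\Z M},
\end{equation*}
and verify that it is unitary. Indeed, by standard orthogonality of characters on the finite abelian group $\Z^{n\times 1}/\Z M$ (which has order $|\det M|$), one has
\begin{equation*}
\sum_{\tilde{\bf k}\in\Z^{n\times 1}/\Z\transp{M}}e^{\tpi\transp{\tilde{\bf k}}M^{-1}({\bf k}-{\bf k}')}=\begin{cases}|\det M|&\text{if }[{\bf k}]=[{\bf k}']\text{ in }\Z^{n\times 1}/\Z M,\\ 0&\text{otherwise.}\end{cases}
\end{equation*}
Consequently $U^*U=\mathbb{1}$, and since $U$ is square this also gives $UU^*=\mathbb{1}$.

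Next, Proposition~\ref{prop:DiagonalizationTheorem1} asserts that each column $\left(e^{\tpi\transp{\tilde{\bf k}}M^{-1}{\bf k}}\right)_{\tilde{\bf k}}$ of $\sqrt{|\det M|}\,U$ is an eigenvector of $\Ihgoto$ with eigenvalue $h_{M,{\bf k}}(\alpha)$. Since $U$ is invertible, this yields the diagonalization
\begin{equation*}
\Ihgoto=U\,\diag\bigl(h_{M,{\bf k}}(\alpha)\bigr)_{{\bf k}\in\Z^{n\times 1}/\Z M}\,U^*.
\end{equation*}
Reading off the $(\tilde{\bf k}_1,\tilde{\bf k}_2)$-entry and using $\overline{e^{\tpi\transp{\tilde{\bf k}_2}M^{-1}{\bf k}}}=e^{-\tpi\transp{\tilde{\bf k}_2}M^{-1}{\bf k}}$ gives exactly the stated formula. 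No genuine obstacle arises; the only points requiring care are the well-definedness of the character pairing on quotient groups and that the $|\det M|$ eigenvectors are indeed linearly independent, both of which follow from the character orthogonality above.
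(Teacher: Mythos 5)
Your proof is correct and follows essentially the same route as the paper: define the unitary character matrix $U$, invoke Schur/character orthogonality, apply Proposition~\ref{prop:DiagonalizationTheorem1} to get $\Ihgoto=U\diag(h_{M,{\bf k}}(\alpha))_{\bf k}U^*$, and read off the entries. The extra remarks on well-definedness of the pairing on the quotient groups are correct but are minor elaborations on the paper's appeal to Schur orthogonality.
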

\begin{proof}
We put $r=|\det M|$ and $U=\frac{1}{\sqrt{r}}\left( e^{\tpi \transp{\tilde{\bf k}} M^{-1}{\bf k}}
\right)_{\substack{\tilde{\bf k}\in\Z^{n\times 1}/\Z \transp{M} \\ {\bf k}\in\Z^{n\times 1}/\Z M}}$. 
Due to Schur's orthogonality, $U$ is a unitary matrix. 
By Proposition \ref{prop:DiagonalizationTheorem1}, 
we see that $\Ihgoto U = U\diag(h_{M,{\bf k}}(\alpha))_{{\bf k}\in\Z^{n\times 1}/\Z M}$. 
Thus, we have
\begin{align}
  \Ihgoto
  &=U\diag(h_{M,{\bf k}}(\alpha))_{{\bf k}\in\Z^{n\times 1}/\Z M}U^*\\
  &=\left(\frac{1}{r}\sum_{{\bf k}\in\Z^{n\times 1}/\Z M}
    e^{\tpi \transp{(\tilde{\bf k}_1-\tilde{\bf k}_2)} M^{-1}{\bf k}}
    h_{M,{\bf k}}(\alpha)\right)_{\tilde{\bf k}_1,\tilde{\bf k}_2\in\Z^{n\times 1}/\Z \transp{M}}.
\end{align}
\end{proof}

\begin{cor}
  $\Ihgoto$ is non-degenerate if for any ${\bf k}\in\Z^{n\times 1}/\Z M$ and any $i=1,\dots,n$, 
  none of $\alpha_0$, $p_i(\alpha+{\bf k})$, and 
  $\alpha_0+|M^{-1}(\alpha+{\bf k})|$ is an integer. Moreover, one has
\begin{align}
    \Ihgoto^{-1}=\left(\frac{1}{|\det M|}\sum_{{\bf k}\in\Z^{n\times 1}/\Z M}
    e^{\tpi \transp{(\tilde{\bf k}_1-\tilde{\bf k}_2)} M^{-1}{\bf k}}
    h_{M,{\bf k}}(\alpha)^{-1}\right)_{\tilde{\bf k}_1,\tilde{\bf k}_2\in\Z^{n\times 1}/\Z \transp{M}}.
  \end{align}
\end{cor}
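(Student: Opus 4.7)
The plan is to read off this corollary directly from the diagonalization $\Ihgoto = U\,\diag(h_{M,{\bf k}}(\alpha))_{{\bf k}}\,U^{\ast}$ established in the immediately preceding corollary, where $U = \frac{1}{\sqrt{|\det M|}}\bigl(e^{\tpi \transp{\tilde{\bf k}}M^{-1}{\bf k}}\bigr)_{\tilde{\bf k},{\bf k}}$ is unitary by Schur orthogonality. Since $\Ihgoto$ is unitarily equivalent to a diagonal matrix, its non-degeneracy is equivalent to the non-vanishing of every diagonal entry $h_{M,{\bf k}}(\alpha)$.

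First, I would isolate the non-degeneracy condition. Multiplying out, one has
\[
h_{M,{\bf k}}(\alpha) = \frac{1 - e^{\tpi(\alpha_0 + |M^{-1}(\alpha+{\bf k})|)}}{(1-e^{\tpi \alpha_0})\prod_{i=1}^n (1-e^{\tpi p_i(\alpha+{\bf k})})},
\]
so $h_{M,{\bf k}}(\alpha)$ is finite and non-zero exactly when none of $\alpha_0$, $p_i(\alpha+{\bf k})$ (for $i=1,\dots,n$), nor $\alpha_0 + |M^{-1}(\alpha+{\bf k})|$ lies in $\Z$. Under the hypothesis, this holds uniformly in ${\bf k}\in\Z^{n\times 1}/\Z M$, so the diagonal factor is invertible and therefore so is $\Ihgoto$.

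Next, I would invert explicitly. From $\Ihgoto = U D U^{\ast}$ with $D=\diag(h_{M,{\bf k}}(\alpha))_{{\bf k}}$ and $U$ unitary, the formal identity $\Ihgoto^{-1} = U D^{-1} U^{\ast}$ applies. Computing the $(\tilde{\bf k}_1,\tilde{\bf k}_2)$-entry using the entries of $U$ and $U^{\ast}$ yields
\[
(\Ihgoto^{-1})_{\tilde{\bf k}_1,\tilde{\bf k}_2} = \frac{1}{|\det M|}\sum_{{\bf k}\in\Z^{n\times 1}/\Z M} e^{\tpi \transp{(\tilde{\bf k}_1-\tilde{\bf k}_2)}M^{-1}{\bf k}}\, h_{M,{\bf k}}(\alpha)^{-1},
\]
which is the claimed expression. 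There is no genuine obstacle here: the statement is a direct formal consequence of the previously established diagonalization, the only content being the careful tabulation of which factors can vanish in the closed form of $h_{M,{\bf k}}(\alpha)$.
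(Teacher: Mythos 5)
Your proposal is correct and takes exactly the approach the paper intends (the paper states this corollary without proof, as an immediate consequence of the unitary diagonalization $\Ihgoto = U\,\diag(h_{M,{\bf k}}(\alpha))_{\bf k}\,U^{\ast}$ established in the preceding corollary). The algebraic simplification of $h_{M,{\bf k}}(\alpha)$ to the quotient $\bigl(1-e^{\tpi(\alpha_0+|M^{-1}(\alpha+{\bf k})|)}\bigr)\big/\bigl((1-e^{\tpi\alpha_0})\prod_{i}(1-e^{\tpi p_i(\alpha+{\bf k})})\bigr)$, the identification of the non-vanishing conditions, and the entrywise computation of $U D^{-1} U^{\ast}$ are all correct.
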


\subsection{Product case and another description of lifts}\label{sec:3.4}
Let us consider the case when the local system is a product of local systems of the form discussed in the previous subsections. We consider $n+s$ complex numbers $\alpha_1,\dots,\alpha_{n+s}$. For each $l=1,\dots,s$, we assume that a natural number $N_l\geq 1$ is given, and we assume $N_1+\cdots +N_s=n$. We put $N_0=1$, and set $\s_l=\{\sum_{a=0}^{l-1}N_a,\sum_{a=0}^{l-1}N_a +1,\dots,\sum_{a=1}^lN_{a}\}$ for $l=1,\dots,s$. For any vector $\tilde{\bf k}={}^t(\tilde{k}_1,\dots,\tilde{k}_n)\in\Z^{n\times 1}$,
we write $\tilde{\bf k}(l)$ for the entries of $\tilde{\bf k}$ indexed by $\s_l$, i.e., we set $\tilde{\bf k}(l)=(\tilde{k}_i)_{i\in\s_l}$. Let $M=({\bf m}(1)\mid\dots\mid {\bf m}(n))$ be an $n\times n$ integer matrix. We put $M_l=({\bf m}(i))_{i\in \s_l}$ for $l=1,\dots,s$, $\alpha=(\alpha_1,\dots,\alpha_n)$ and we set $\mathcal{L}=\C x^\alpha\prod_{l=1}^s(1-\sum_{i\in\s_l}x^{{\bf m}(i)})^{\alpha_{n+l}}$. By setting $\alpha^\prime=\transp{(-\alpha_{n+1},\dots,-\alpha_{n+s},\alpha_1,\dots,\alpha_n)}$, we define an $(n+s)\times (n+s)$ matrix $M^\prime$ by 
\begin{equation}\label{eqn:3.56}
M^\prime=
({\bf m}^\prime(1)|\cdots|{\bf m}^\prime(n+s))=
\left(
\begin{array}{cccc|cccc|c|cccc}
1&1&\cdots&1&0&0&\cdots&0&\cdots&0&0&\cdots&0\\
\hline
0&0&\cdots&0&1&1&\cdots&1&\cdots&0&0&\cdots&0\\
\hline
&&\vdots& & &&\vdots& & \ddots& &&\vdots& \\
\hline
0&0&\cdots&0&0&0&\cdots&0&\cdots&1&1&\cdots&1\\
\hline
{\bf 0}&&M_1& &{\bf 0}& &M_2& &\cdots&{\bf 0}& &M_s&
\end{array}
\right)
\end{equation}
and $\tilde{X}=\{ x\in(\C^*)^n\mid 1-\sum_{i\in\s_l}x^{{\bf m}(i)}\neq 0 (l=1,\dots, s)\}$. As before, we say that $\alpha^\prime$ is non-resonant if for any proper subset $\Gamma\subsetneq\{ 1,\dots,n+s\}$, one has $\alpha^\prime\notin{\rm span}_{\C}\{ {\bf m}^\prime(i)\mid i\in \Gamma\}+\Z^{(n+s)\times 1}$. 
\begin{prop}\label{prop:purity-dim2}
Suppose that the parameter vector $\alpha^\prime$ is non-resonant and $\alpha_{n+1},\dots,\alpha_{n+s}\notin\Z$. Then, we have the purity
\begin{equation}
\Homo_l\left(\tX ;\mathcal{L}\right)=
\begin{cases}
0&(l\neq n)\\
\C^{|\det M|}&(l=n)
\end{cases}
\end{equation}
and the regularization condition is true, i.e., the canonical  morphism ${\rm can}:\Homo_l\left(\tX ;\mathcal{L}\right)\rightarrow\Homo_l^{lf}\left(\tX ;\mathcal{L}\right)$ is an isomorphism for any $l$.

\end{prop}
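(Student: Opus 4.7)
The plan is to mimic the proof of Proposition \ref{prop:purity-dim}, replacing the single auxiliary variable $x_0$ used there by $s$ auxiliary variables $y_1,\dots,y_s$, one for each factor $h_l(x)=1-\sum_{i\in \s_l}x^{{\bf m}(i)}$. Concretely, by the proof of \cite[Theorem 2.12]{MatsubaraEuler} together with Poincar\'e-Verdier duality, I would first reduce the purity and regularization statement to the claim that the canonical morphism $j_!\mathcal F\to \R j_*\mathcal F$ is an isomorphism, where
\[
j:(\C^*)^{n+s}\longrightarrow \C^{n+s},\quad j(y_1,\dots,y_s,x_1,\dots,x_n)=\bigl(y_1,\dots,y_s,\ (y_{l(i)}\cdot x^{{\bf m}(i)})_{i=1}^{n}\bigr),
\]
with $l(i)$ the unique $l$ such that $i\in \s_l$, and $\mathcal F=\C\prod_{l=1}^{s}y_l^{-\alpha_{n+l}}\cdot x^\alpha$ is the rank one local system on $(\C^*)^{n+s}$ whose monodromy exponent vector equals $\alpha'$. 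The map $j$ factors as $j=\iota\circ p$, with $\iota:(\C^*)^{n+s}\hookrightarrow\C^{n+s}$ the open inclusion and $p:(\C^*)^{n+s}\to (\C^*)^{n+s}$ an \'etale covering of degree $|\det M|$ corresponding to the linear map encoded by the columns of $M'$.

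For stalks at points $\oz\in(\C^*)^{n+s}$, the isomorphism $j_!\mathcal F\to \R j_*\mathcal F$ is automatic since $\R j_*\mathcal F$ is concentrated in degree zero there. For a boundary point $\oz\in\C^{n+s}\setminus(\C^*)^{n+s}$, I set $\Gamma=\{j:\oz_j\neq 0\}\subsetneq\{1,\dots,n+s\}$; since $j_!\mathcal F_{\oz}=0$, the task reduces to showing $\R j_*\mathcal F_{\oz}=0$, and by homotopy retraction onto the stratum $\{\zeta_j=\oz_j\ (j\in\Gamma),\ \zeta_j\neq 0\ (j\notin\Gamma)\}$ I must verify that $\Homo^*(j^{-1}(\text{stratum});\mathcal F)=0$. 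Arguing exactly as in Proposition \ref{prop:purity-dim}, this vanishing amounts to the condition that the pulled-back local system on the sub-torus $p^{-1}(\text{stratum})$ has non-trivial monodromy along some fundamental loop, which translates algebraically into the inequality $\alpha'\notin{\rm span}_{\C}\{{\bf m}'(i):i\in\Gamma\}+\Z^{(n+s)\times 1}$, exactly the stated non-resonance hypothesis; the supplementary hypothesis $\alpha_{n+l}\notin\Z$ enters via non-triviality of the local system along each $y_l$-divisor. Finally, $\dim_{\C}\Homo_n(\tX;\mathcal L)=|\det M|$ follows from multiplicativity of the Euler characteristic under $p_M:\tX\to X_0$ of \S\ref{subsection:diagonal}, together with $\chi(X_0)=(-1)^n$ for $X_0=\{\xi\in(\C^*)^n:1-\sum_{i\in\s_l}\xi^{{\bf m}(i)}\neq 0\ \text{for all}\ l\}$, by a standard inclusion-exclusion argument on the $s$ closed hypersurfaces inside $(\C^*)^n$.

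I expect the main obstacle to be the careful algebraic translation of the fundamental-loop monodromy condition on each boundary-stratum torus into the stated non-resonance inequality in $(\alpha',M')$: with $s$ product blocks rather than one, the proper subsets $\Gamma\subsetneq\{1,\dots,n+s\}$ index a combinatorially richer family of strata of $\C^{n+s}\setminus(\C^*)^{n+s}$, so the bookkeeping to verify the correspondence between each stratum and the corresponding non-resonance inequality requires more care than in Proposition \ref{prop:purity-dim}.
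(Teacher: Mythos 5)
Your proposal is correct and follows precisely the route the paper intends: the paper's own proof consists solely of the remark ``same as that of Proposition~\ref{prop:purity-dim},'' and your argument spells that out, replacing the single auxiliary coordinate $x_0$ with coordinates $y_1,\dots,y_s$ (one per factor), factoring $j$ through a degree-$|\det M'|=|\det M|$ \'etale cover $p$ of the torus, and reading off the stratum-by-stratum monodromy vanishing from the non-resonance hypothesis on $(\alpha',M')$. One small slip: at the end, $X_0$ (the target of $p_M$) should be $\{\xi\in(\C^*)^n:1-\sum_{i\in\s_l}\xi_i\neq 0 \text{ for all } l\}$, with linear conditions in $\xi$ rather than $\xi^{{\bf m}(i)}$, so that $X_0$ is a product of the lower-dimensional spaces $X$ from \S\ref{subsection:diagonal} and $\chi(X_0)=\prod_l(-1)^{N_l}=(-1)^n$ follows immediately.
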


\noindent
The proof of Proposition \ref{prop:purity-dim2} is same as that of Proposition \ref{prop:purity-dim}.

We consider a covering $p_M:(\C^*)_x^n\rightarrow(\C^*)^n_{\xi}$ defined by $p_M(x)=x^M$
and take integers $m_1,\dots,m_n\in\Z$ so that $D=\diag(m_1,\dots,m_n)$ satisfies $\bar{M}=DM^{-1}\in\Z^{n\times n}$.
We define $p_{\bar{M}}:(\C^*)^n_y\rightarrow(\C^*)^n_{x}$ and $p_D:(\C^*)^n_y\rightarrow(\C^*)^n_{\xi}$ by
$p_{\bar{M}}(y)=y^{\bar{M}}$ and $p_D(y)=y^D$, respectively.
Then, for any $\tilde{\bf k}\in\Z^{n\times 1}$, one can define the twisted cycle
$\tilde{C}^{(\tilde{\bf k})}\in\mathcal{Z}_n\left( \tX;\mathcal{L}\right)$ by $\tilde{C}^{(\tilde{\bf k})}=(p_{\bM})_{*}\left( \bar{C}^{(\tilde{\bf k})}\right)$. Here, $\bar{C}^{(\tilde{\bf k})}$ is the cross product $\bar{C}^{(\tilde{\bf k})}=\bar{C}^{(\tilde{\bf k}(1))}\times \cdots\times \bar{C}^{(\tilde{\bf k}(s))}$ of $\bar{C}^{(\tilde{\bf k}(l))}\in\mathcal{Z}_{N_l}\left( (\C^*)^{\s_l}_{y_{\s_l}}\setminus\{ 1=\sum_{i\in\s_l}y_i^{m_i}\};p_{\bar{M}}^{-1}\mathcal{L}\right)$. We put $I_{h,1}=\left(\langle \tilde{C}^{(\tilde{\bf k}_1)},\tilde{C}^{(\tilde{\bf k}_2)\vee}\rangle_h\right)_{\tilde{\bf k}_1,\tilde{\bf k}_2\in\Z^{n\times 1}/\Z \transp{M}}$.

\begin{prop}\label{DiagonalizationTheorem2}
For any fixed ${\bf k}\in\Z^{n\times 1}/\Z M$, the column vector $\left( e^{2\pi\ii{}^t\tilde{\bf k}M^{-1}{\bf k}}\right)_{\tilde{\bf k}\in\Z^{n\times 1}/\Z \transp{M}}$ is an eigenvector of $I_{h,1}$ whose eigenvalue is 
\begin{equation}
h_{M,{\bf k}}(\alpha)=\prod_{l=1}^k\left(\frac{1}{1-e^{2\pi\ii\alpha_{n+l}}}\frac{1-e^{2\pi\ii\alpha_{n+l}}\prod_{i\in\s_l}e^{2\pi\ii p_i(\alpha+{\bf k})}}{\prod_{i\in\s_l}(1-e^{2\pi\ii p_i(\alpha+{\bf k})})}\right).
\end{equation}
\end{prop}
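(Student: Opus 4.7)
The plan is to adapt the proof of Proposition~\ref{prop:DiagonalizationTheorem1} to the product setting. The only additional ingredient needed is a Kronecker product decomposition of the intermediate intersection matrix $\bIhgoto$ on $\bX$.

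First, I would observe that the $y$-space factors as $\bX = \prod_{l=1}^s \bX_l$, with $\bX_l = \{y_{\s_l} \in (\C^*)^{\s_l} \mid 1 - \sum_{i\in\s_l} y_i^{m_i} \neq 0\}$, and that $p_D^{-1}\mathcal{L}$ decomposes as an external tensor product of local systems $\mathcal{L}_l$ on the $\bX_l$'s, each of the type studied in \S\ref{subsection:diagonal}. Since $\bC^{(\tilde{\bf k})} = \bC^{(\tilde{\bf k}(1))} \times \cdots \times \bC^{(\tilde{\bf k}(s))}$ is a cross product, the multiplicativity of transverse intersection pairings on product spaces (immediate from the local description of the twisted intersection number in terms of transverse crossings) yields
\[
\langle \bC^{(\tilde{\bf l}_1)}, \bC^{(\tilde{\bf l}_2)\vee}\rangle_h = \prod_{l=1}^s \langle \bC^{(\tilde{\bf l}_1(l))}, \bC^{(\tilde{\bf l}_2(l))\vee}\rangle_{h,l}.
\]
Under the canonical isomorphism $\Z^{n\times 1}/\Z \transp D \simeq \prod_{l=1}^s \Z^{\s_l\times 1}/\Z \transp{D_l}$, where $D_l = \diag((m_i)_{i \in \s_l})$, this means precisely that $\bIhgoto = \bigotimes_{l=1}^s \bIhgoto^{(l)}$, the Kronecker product of the block intersection matrices coming from the diagonal setting.

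Next, I would imitate the computation in the proof of Proposition~\ref{prop:DiagonalizationTheorem1} verbatim. The pushforward formula of Proposition~\ref{prop:lift-intersection}~(2) applied to $p_{\bM}:\bX \to \tX$, together with the identity $M^{-1} = D^{-1}\bM$, converts the action of $\Ihgoto$ on $(e^{\tpi \transp{\tilde{\bf k}_2}M^{-1}{\bf k}})_{\tilde{\bf k}_2}$ into the action of $\bIhgoto$ on the vector $(e^{\tpi \transp{\tilde{\bf m}}D^{-1}\bM {\bf k}})_{\tilde{\bf m}}$ indexed by $\tilde{\bf m}\in\Z^{n\times 1}/\Z\transp D$. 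This latter vector factors as $\bigotimes_l (e^{\tpi \transp{\tilde{\bf m}(l)}D_l^{-1}(\bM {\bf k})(l)})_{\tilde{\bf m}(l)}$, so by the Kronecker decomposition of $\bIhgoto$ the computation reduces to a block-by-block application of Proposition~\ref{prop:eigenvalue-diagonal}. This produces the eigenvalue $\prod_{l=1}^s h_{D_l,(\bM {\bf k})(l)}(\alpha_{n+l},(\bM\alpha)(l))$, and an elementary manipulation using $D_l^{-1}((\bM\alpha)(l) + (\bM {\bf k})(l)) = (M^{-1}(\alpha+{\bf k}))(l)$ simplifies each per-block factor into the corresponding factor of the claimed $h_{M,{\bf k}}(\alpha)$.

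The main technical point is the compatibility between two decompositions of $\Z^{n\times 1}/\Z\transp D$: the Fourier-type decomposition that diagonalises $\bIhgoto$, and the block product decomposition coming from the cross-product structure of the cycles. Because $D$ is diagonal, the group $\Z^{n\times 1}/\Z\transp D$ splits naturally along the blocks $\s_l$, and these two decompositions align. Once this alignment and the multiplicativity of intersection pairings on products are in place, the rest of the argument is a block-wise repetition of \S\ref{subsection:not-diagonal}.
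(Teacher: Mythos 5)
Your proof is correct and is essentially the elaboration the paper has in mind: the paper states only that the proof is ``same as that of Proposition \ref{prop:DiagonalizationTheorem1},'' and the missing ingredient you supply — that the cross-product structure of the cycles $\bC^{(\tilde{\bf k})}$ and the external tensor factorization of $p_D^{-1}\mathcal{L}$ over $\bX = \prod_l \bX_l$ give $\bIhgoto = \bigotimes_l \bIhgoto^{(l)}$, with each factor diagonalized by Proposition \ref{prop:eigenvalue-diagonal} — is exactly what is needed to push the calculation of Proposition \ref{prop:DiagonalizationTheorem1} through block by block. Your observation that the Fourier eigenvectors $(e^{\tpi\transp{\tilde{\bf q}}D^{-1}\bM{\bf k}})_{\tilde{\bf q}}$ factor compatibly along the $\s_l$-blocks (because $D$ is diagonal, so $D^{-1}$ is block diagonal) is the right compatibility check, and the final simplification $D_l^{-1}((\bM\alpha)(l)+(\bM{\bf k})(l)) = (M^{-1}(\alpha+{\bf k}))(l) = (p_i(\alpha+{\bf k}))_{i\in\s_l}$ matches the claimed formula for $h_{M,{\bf k}}(\alpha)$.
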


\noindent
The proof of Proposition \ref{DiagonalizationTheorem2} is same as that of Proposition \ref{prop:DiagonalizationTheorem1}. From Proposition \ref{DiagonalizationTheorem2}, we see that the intersection matrix $I_{h,1}$ does not depend on the choice of $D$.

Let $C^{({\bf 0})}\in \mathcal{Z}_n\left( \tX;\mathcal{L}\right)$ be the cycle constructed in \cite[Appendix 3]{MatsubaraEuler}. For any ${\bf k}\in \Z^{n\times 1}$, we set $\varphi_{\bf k}=\frac{x^{\bf k}}{\prod_{l=1}^k(1-\sum_{i\in\s_l}x^{{\bf m}(i)})}\frac{dx}{x}.$ By construction, we have identities
\begin{align}
\langle\varphi_{\bf k},C^{({\bf 0})}\rangle&:=\int_{C}x^{\alpha}\prod_{l=1}^s\Bigg(1-\sum_{i\in\s_l}x^{{\bf m}(i)}\Bigg)^{\alpha_{n+l}}\varphi_{\bf k}\\
&=\frac{1}{\det M}\int_{\prod_{l=1}^kP_{N_l}}\xi^{M^{-1}(\alpha+{\bf k})}\prod_{l=1}^s\Bigg(1-\sum_{i\in\s_l}\xi_i\Bigg)^{\alpha_{n+l}-1}\frac{d\xi}{\xi}\\
&=\frac{(2\pi\ii)^{n+s}e^{-\pi\ii\sum_{l=1}^s\alpha_{n+l}}}{\det M} \times\nonumber\\
&\ \ \ \ \frac{1}{\prod_{l=1}^s\Gamma(1-\alpha_{n+l})\Gamma\left(\sum_{i\in\s_l} p_i(\alpha+{\bf k})+\alpha_{n+l}\right)\prod_{i\in\s_l}\Gamma(p_i(\alpha+{\bf k}))}.
\end{align}

\noindent
Here, $P_k$ is the $k$-dimensional Pochhammer cycle (\cite[\S6]{Beukers}).
For any $\tilde{\bf k}\in \Z^{n\times 1}$, $C^{(\tilde{\bf k})}$ denotes the deck transform of $C^{({\bf 0})}$ along the loop $\xi\mapsto e^{2\pi\ii{}^t\tilde{\bf k}}\xi$. It is easy to check the formula
$\langle\varphi_{\bf k},C^{(\tilde{\bf k})}\rangle=e^{2\pi\ii{}^t\tilde{\bf k}M^{-1}(\alpha+{\bf k})}\langle\varphi_{\bf k},C^{({\bf 0})}\rangle$.

On the other hand, we can also integrate $\varphi_{\bf k}$ along the cycles $\tilde{C}^{(\tilde{\bf k})}$.
We write $\hat{C}^{(\tilde{\bf k})}$ for the deck transformation of $\tilde{C}^{({\bf 0})}$ along the loop $\xi\mapsto e^{2\pi\ii{}^t\tilde{\bf k}}\xi$ with respect to the covering $p_M:(\C^*)^n_x\rightarrow(\C^*)^n_{\xi}$. Then, we have
$\tilde{C}^{(\tilde{\bf k})}=e^{-2\pi\ii {}^t\tilde{\bf k}M^{-1}\alpha}\cdot \hat{C}^{(\tilde{\bf k})}$.
In fact, we see that the deck transform of $\bar{C}^{(\bf 0)}$ along the loop $\xi\mapsto e^{2\pi\ii{}^t\tilde{\bf k}}\xi$ with respect to the covering $p_D:(\C^*)^n_y\rightarrow(\C^*)^n_{\xi}$ is given by $e^{2\pi\ii{}^t\tilde{\bf k}D^{-1}\bar{M}\alpha}\cdot \bar{C}^{(\tilde{\bf k})}$ in view of the formula (\ref{eq:def-lifted-cycle}) and the formula $p_{\bar{M}}^{-1}\mathcal{L}=\C y^{\bar{M}\alpha}\prod_{l=1}^s(1-\sum_{i\in\s_l}y^{m_i})^{\alpha_{n+l}}$. Since we have 
$
\hat{C}^{(\tilde{\bf k})}=(p_{\bM})_{*}\left(e^{2\pi\ii{}^t\tilde{\bf k}D^{-1}\bar{M}\alpha}\cdot \bar{C}^{(\tilde{\bf k})}\right),
$
we obtain $\hat{C}^{(\tilde{\bf k})}=e^{2\pi\ii{}^t\tilde{\bf k}M^{-1}\alpha}\cdot \tilde{C}^{(\tilde{\bf k})}$. Here, we have used the identity $D^{-1}\bar{M}=M^{-1}$. We can easily check the formula
\begin{align}
\langle\varphi_{\bf k},\tilde{C}^{(\tilde{\bf k})}\rangle&:=\int_{\tilde{C}^{(\tilde{\bf k})}}x^{\alpha}\prod_{l=1}^s(1-\sum_{i\in\s_l}x^{{\bf m}(i)})^{\alpha_{n+l}}\varphi_{\bf k}\\
&=e^{-2\pi\ii{}^t\tilde{\bf k}M^{-1}\alpha}\int_{\hat{C}^{(\tilde{\bf k})}}x^{\alpha}\prod_{l=1}^s(1-\sum_{i\in\s_l}x^{{\bf m}(i)})^{\alpha_{n+l}}\varphi_{\bf k}\\
&=\frac{e^{-2\pi\ii{}^t\tilde{\bf k}M^{-1}\alpha}}{\det M}\int_{C}(e^{2\pi\ii{}^t\tilde{\bf k}}\xi)^{M^{-1}(\alpha+{\bf k})}\prod_{l=1}^s\Bigg(1-\sum_{i\in\s_l}\xi_i\Bigg)^{\alpha_{n+l}-1}\frac{d\xi}{\xi}\\
&=\frac{e^{2\pi\ii{}^t\tilde{\bf k}M^{-1}{\bf k}}}{\det M}\frac{\prod_{l=1}^s\Gamma(\alpha_{n+l})\prod_{i\in\s_l}\Gamma(p_i(\alpha+{\bf k}))}{\prod_{l=1}^s\Gamma\left(\sum_{i\in\s_l}p_i(\alpha+{\bf k})+\alpha_{n+l}\right)}\\
&=\frac{e^{2\pi\ii{}^t\tilde{\bf k}M^{-1}{\bf k}}e^{-\pi\ii |M^{-1}(\alpha+{\bf k})|}}{\prod_{l=1}^s(1-e^{-2\pi\ii\alpha_{n+l}})\prod_{i\in\s_l}(1-e^{-2\pi\ii p_i(\alpha+{\bf k})})}\langle\varphi_{\bf k},C^{({\bf 0})}\rangle, 
\end{align}
and hence we obtain a relation
$\langle\varphi_{\bf k},C^{(\tilde{\bf k})}\rangle=e^{2\pi\ii{}^t\tilde{\bf k}M^{-1}\alpha}m(\alpha+{\bf k},\tilde{\alpha})\langle\varphi_{\bf k},\tilde{C}^{(\tilde{\bf k})}\rangle$, 
where we set 
\begin{equation}
m(\alpha+{\bf k},\alpha_{n+1},\dots,\alpha_{n+k}):=m(\alpha+{\bf k},\tilde{\alpha})
:=
e^{\pi\ii|M^{-1}(\alpha+{\bf k})|}\prod_{l=1}^s(1-e^{-2\pi\ii\alpha_{n+l}})\prod_{i\in\s_l}(1-e^{-2\pi\ii p_i(\alpha+{\bf k})}).
\end{equation}
Thus, if we introduce 
row vectors $\tilde{\bf c}=([\tilde{C}^{(\tilde{\bf k})}])_{\tilde{\bf k}\in\Z^{n\times 1}/\Z \transp{M}}$ and ${\bf c}=([C^{(\tilde{\bf k})}])_{\tilde{\bf k}\in\Z^{n\times 1}/\Z \transp{M}}$, the linear relation ${\bf c}=\tilde{\bf c}B$ is given by 
$B=\left( \langle\varphi_{\bf k},\tilde{C}^{(\tilde{\bf k})}\rangle\right)_{{\bf k},\tilde{\bf k}}^{-1} 
\diag(m(\alpha+{\bf k},\tilde{\alpha}))_{\bf k}
\left( \langle\varphi_{\bf k},\tilde{C}^{(\tilde{\bf k})}\rangle\right)_{{\bf k},\tilde{\bf k}}
\diag(e^{2\pi\ii{}^t\tilde{\bf k}M^{-1}\alpha})_{\tilde{\bf k}}$.
By a direct computation, we obtain a closed formula
\begin{equation}
B=\bar{U}\diag(m(\alpha+{\bf k},\tilde{\alpha}))_{\bf k}\transp{U}\diag(e^{2\pi\ii{}^t\tilde{\bf k}M^{-1}\alpha})_{\tilde{\bf k}},
\end{equation}
where $\bar{U}$ is the complex conjugate of $U$. 
The matrix $B^\vee$ corresponding to the parameter $(-\alpha,-\tilde{\alpha})$ is given by\footnote{we have replaced ${\bf k}$ by $-{\bf k}$ to transform $\bar{U}$ into $U$ and $\transp{U}$ to $U^*$.}
\begin{align}
B^\vee&=\bar{U}\diag(m(-\alpha+{\bf k},-\tilde{\alpha}))_{\bf k}\transp{U}\diag(e^{-2\pi\ii{}^t\tilde{\bf k}M^{-1}\alpha})_{\tilde{\bf k}}\\
&=U\diag(m(-\alpha-{\bf k},-\tilde{\alpha}))_{\bf k}U^*\diag(e^{-2\pi\ii{}^t\tilde{\bf k}M^{-1}\alpha})_{\tilde{\bf k}}.
\end{align}

\begin{prop}\label{prop:3.14}
Set $I_{h,2}:=\left(\langle C^{(\tilde{\bf k}_1)},C^{(\tilde{\bf k}_2)\vee}\rangle_h\right)_{\tilde{\bf k}_1,\tilde{\bf k}_2\in\Z^{n\times 1}/\Z \transp{M}}
$ and 
\begin{align}
H_{M,{\bf k}}(\alpha,\tilde{\alpha})&:=m(\alpha+{\bf k},\tilde{\alpha})m(-\alpha-{\bf k},-\tilde{\alpha})h_{M,{\bf k}}(\alpha,\tilde{\alpha})\\
&=\prod_{l=1}^k\left(1-e^{-2\pi\ii\alpha_{n+l}}\right)\left(1-e^{2\pi\ii(\alpha_{n+l}+\sum_{i\in\s_l}p_i(\alpha+{\bf k}))}\right)\prod_{i\in\s_l}\left(1-e^{-2\pi\ii p_i(\alpha+{\bf k})}\right).
\end{align}
Then, one has a formula
\begin{equation}
I_{h,2}=\diag(e^{2\pi\ii{}^t\tilde{\bf k}M^{-1}\alpha})_{\tilde{\bf k}}U\diag(H_{M,{\bf k}}(\alpha,\tilde{\alpha}))_{\bf k}U^*\diag(e^{-2\pi\ii{}^t\tilde{\bf k}M^{-1}\alpha})_{\tilde{\bf k}}\label{MyIntersectionMatrix}.
\end{equation}
\end{prop}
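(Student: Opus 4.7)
The plan is to derive Proposition \ref{prop:3.14} as a change-of-basis computation from Proposition \ref{DiagonalizationTheorem2}, using the explicit transition matrix $B$ between the two families of cycles constructed immediately above the statement.

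First I would record that the identity ${\bf c} = \tilde{\bf c}B$, which was established right before the proposition, implies on the level of intersection matrices
\begin{equation}
I_{h,2} = \transp{B}\, I_{h,1}\, B^{\vee},
\end{equation}
where $I_{h,1} = (\langle\tilde{C}^{(\tilde{\bf k}_1)},\tilde{C}^{(\tilde{\bf k}_2)\vee}\rangle_h)$ and $B^{\vee}$ is the transition matrix obtained by replacing $(\alpha,\tilde{\alpha})$ by $(-\alpha,-\tilde{\alpha})$. From Proposition \ref{DiagonalizationTheorem2}, together with the unitarity of $U$, we have the spectral decomposition
\begin{equation}
I_{h,1} = U\,\diag(h_{M,{\bf k}}(\alpha))_{\bf k}\,U^{*}.
\end{equation}

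Next I would substitute the closed-form expressions for $B$ and $B^{\vee}$ derived just before the proposition, namely
\begin{align}
B &= \bar{U}\,\diag(m(\alpha+{\bf k},\tilde{\alpha}))_{\bf k}\,\transp{U}\,\diag(e^{2\pi\ii\transp{\tilde{\bf k}}M^{-1}\alpha})_{\tilde{\bf k}},\\
B^{\vee} &= U\,\diag(m(-\alpha-{\bf k},-\tilde{\alpha}))_{\bf k}\,U^{*}\,\diag(e^{-2\pi\ii\transp{\tilde{\bf k}}M^{-1}\alpha})_{\tilde{\bf k}}.
\end{align}
Transposing gives $\transp{B} = \diag(e^{2\pi\ii\transp{\tilde{\bf k}}M^{-1}\alpha})_{\tilde{\bf k}}\,U\,\diag(m(\alpha+{\bf k},\tilde{\alpha}))_{\bf k}\,U^{*}$, using $\transp{\bar{U}}=U^{*}$ and $\transp{\transp{U}}=U$. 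Then the two inner pairs $U^{*}U$ collapse to the identity, so all three diagonal factors in the ${\bf k}$-index merge into a single diagonal with entries $m(\alpha+{\bf k},\tilde{\alpha})\,h_{M,{\bf k}}(\alpha)\,m(-\alpha-{\bf k},-\tilde{\alpha})$.

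Finally I would verify that this product equals $H_{M,{\bf k}}(\alpha,\tilde{\alpha})$ by direct multiplication of the definitions. The two factors $m(\pm(\alpha+{\bf k}),\pm\tilde{\alpha})$ contribute pairs $(1-e^{-2\pi\ii\alpha_{n+l}})(1-e^{2\pi\ii\alpha_{n+l}})$ and $(1-e^{-2\pi\ii p_i(\alpha+{\bf k})})(1-e^{2\pi\ii p_i(\alpha+{\bf k})})$, and the two exponential prefactors $e^{\pm\pi\ii|M^{-1}(\alpha+{\bf k})|}$ cancel; the factors $(1-e^{2\pi\ii\alpha_{n+l}})$ and $\prod_{i\in\s_l}(1-e^{2\pi\ii p_i(\alpha+{\bf k})})$ are then cancelled by the denominators of $h_{M,{\bf k}}(\alpha)$, leaving the advertised expression $\prod_l (1-e^{-2\pi\ii\alpha_{n+l}})(1-e^{2\pi\ii(\alpha_{n+l}+\sum_{i\in\s_l}p_i(\alpha+{\bf k}))})\prod_{i\in\s_l}(1-e^{-2\pi\ii p_i(\alpha+{\bf k})})$. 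No step is a real obstacle here: once the transition matrix $B$ is in hand (which is the genuine content, already established), the proof is a bookkeeping exercise, and the only place to watch carefully is the correct placement of $U$, $\bar{U}$, $\transp{U}$, $U^{*}$ when transposing $B$.
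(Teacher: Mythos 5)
Your proposal is correct and is essentially the paper's own proof: both start from $I_{h,2}={}^tB\,I_{h,1}\,B^\vee$, use the spectral decomposition $I_{h,1}=U\diag(h_{M,{\bf k}}(\alpha))U^*$ from Proposition~\ref{DiagonalizationTheorem2}, substitute the closed forms of $B$ and $B^\vee$, and collapse the inner $U^*U$ products to obtain a single diagonal with entries $m(\alpha+{\bf k},\tilde\alpha)\,h_{M,{\bf k}}(\alpha)\,m(-\alpha-{\bf k},-\tilde\alpha)=H_{M,{\bf k}}(\alpha,\tilde\alpha)$. The only difference is that you spell out the cancellations in the final product check, whereas the paper leaves them implicit.
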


\begin{proof}
We have
\begin{align}
I_{h,2}&={}^tBI_{h,1}B^\vee\\
&=\diag(e^{2\pi\ii{}^t\tilde{\bf k}M^{-1}\alpha})_{\tilde{\bf k}}U\diag(m(\alpha+{\bf k},\tilde{\alpha}))_{\bf k}U^*I_{h,1} U\diag(m(-\alpha-{\bf k},-\tilde{\alpha}))_{\bf k}U^*\diag(e^{-2\pi\ii{}^t\tilde{\bf k}M^{-1}\alpha})_{\tilde{\bf k}}\\
&=\diag(e^{2\pi\ii{}^t\tilde{\bf k}M^{-1}\alpha})_{\tilde{\bf k}}U\diag(H_{M,{\bf k}}(\alpha,\tilde{\alpha}))_{\bf k}U^*\diag(e^{-2\pi\ii{}^t\tilde{\bf k}M^{-1}\alpha})_{\tilde{\bf k}}.
\end{align}
\end{proof}


\subsection{The proof of Theorem \ref{thm:SigmaIntersectionMatrix1}}\label{subsection:GKZcase}

We are going to prove Theorem \ref{thm:SigmaIntersectionMatrix1}. We fix a regular triangulation $T$ of $A$ given by (\ref{CayleyConfigu}) and fix a simplex $\s\in T$. In view of the discussion of \cite[\S7]{MatsubaraEuler}, we are reduced to the study of the homology intersection form of a particular twisted homology group explained below. 

We consider a local system $\C\Phi=\C \displaystyle
\prod_{l=1}^k\Big( \sum_{i\in\s^{(l)}}x^{{\bf a}^{(l)}(i)}\Big)^{-\gamma_l}x^c$, where $x=(x_1,\dots,x_n)$. We put $A_{\s^{(l)}}=({\bf a}^{(l)}(i))_{i\in\s^{(l)}}$, $\s=\s^{(1)}\cup\cdots\cup\s^{(k)}$, $A^\prime=(A_{\s^{(1)}}|\cdots|A_{\s^{(k)}})$ and 
\begin{equation}\label{CayleyConfig}
A_{\s}
=
\left(
\begin{array}{ccc|ccc|c|ccc}
1&\cdots&1&0&\cdots&0&\cdots&0&\cdots&0\\
\hline
0&\cdots&0&1&\cdots&1&\cdots&0&\cdots&0\\
\hline
 &\vdots& & &\vdots& &\ddots& &\vdots& \\
\hline
0&\cdots&0&0&\cdots&0&\cdots&1&\cdots&1\\
\hline
 &A_{\s^{(1)}}& & &A_{\s^{(2)}}& &\cdots & &A_{\s^{(k)}}& 
\end{array}
\right).
\end{equation}

\noindent
We set $X:=\{ x\in (\C^*)^n\mid \sum_{i\in\s^{(l)}}x^{{\bf a}^{(l)}(i)}\neq 0,\ l=1,\dots,k\}$.  We assume $|\s^{(l)}|\geq 1$ and $A_\s$ is a square invertible matrix of size $n+k$. We write $\s^{(l)}=\{ i_1^{(l)},\dots,i_{|\s^{(l)}|}^{(l)}\}$. We set $\bs^\prime=\{ i_1^{(l)}\}_{l=1}^k$ and $\s^\prime=\s\setminus\bs^\prime$. The set $\bs^\prime$ is naturally identified with the set $\{ 1,\dots,k\}$. We consider the preliminary reduction of the matrix $A_\s$ so that it fits in the setting of the \S\ref{sec:3.4}. We consider the row reduction of $A_{\s}$ obtained by left multiplication of 
\begin{equation}
Q=\left(
\begin{array}{c|c}
E_{k}&{\bf O}\\
\hline
-A^\prime_{\bs^\prime}&E_n
\end{array}
\right).
\end{equation}
Here, $E_k$ denotes the identity matrix of size $k$. For any matrix $B=({\bf b}(1)|\cdots|{\bf b}(m))$ and any partition $I\sqcup J=\{ 1,\dots,m\}$, we write $B$ as $B=\left( ({\bf b}(i))_{i\in I}|({\bf b}(j))_{j\in J}\right)$. We write $E_{\bs^\prime}$ for the identity matrix whose columns are labeled by the set $\bs^\prime$. We can rewrite $A_\s$ as
\begin{equation}
A_\s=\left(
\begin{array}{c|c}
E_{\bs^\prime}&S_{\s^\prime}\\
\hline
A^\prime_{\bs^\prime}&A^\prime_{\s^\prime}
\end{array}
\right)
\end{equation}
and $QA_\s$ as

\begin{equation}
QA_\s=
\left(
\begin{array}{c|c}
E_{\bs^\prime}&S_{\s^\prime}\\
\hline
{\bf O}&A_{\s^\prime,\bs^\prime}
\end{array}
\right),
\end{equation}
where we have put $A_{\s^\prime,\bs^\prime}=A^\prime_{\s^\prime}-A^\prime_{\bs^\prime}S_{\s^\prime}$. Note that the matrix $QA_\s$ takes the form of (\ref{eqn:3.56}). This reduction corresponds to the identity
\begin{equation}
\Phi=\prod_{l=1}^k\Bigg( \sum_{i\in\s^{(l)}}x^{{\bf a}^{(l)}(i)-{\bf a}^{(l)}(i^{(l)}_1)}\Bigg)^{-\gamma_l}x^{c-\sum_{l=1}^k\gamma_l{\bf a}^{(l)}(i^{(l)}_1)}.
\end{equation}

\noindent
From this expression, we easily see that the deck transformation group of the corresponding covering $(\C^*)^n\ni x\mapsto x^{A_{\s^\prime,\bs^\prime}}\in (\C^*)^n$ can be identified with $\Z^{\s^\prime\times 1}/\Z \transp{A}_{\s^\prime,\bs^\prime}$ and its dual group is $\Z^{n\times 1}/\Z A_{\s^\prime,\bs^\prime}.$ However, it is also useful to stick to the original matrix $A_\s$ to make formulae clean. Firstly, we have the natural isomorphism
\begin{equation}
\Z^{(n+k)\times 1}/\Z A_\s\overset{Q\times}{\overset{\sim}{\rightarrow}}\Z^{(n+k)\times 1}/\Z QA_\s.
\end{equation}
The right-hand side is isomorphic to $\Z^{n\times 1}/\Z A_{\s^\prime,\bs^\prime}$ via projection ${\rm pr}_1:\Z^{(n+k)\times 1}\rightarrow\Z^{n\times 1}$ truncating the first $k$ entries. Therefore, we obtain a natural isomorphism
\begin{equation}\label{Isom1}
\Z^{(n+k)\times 1}/\Z A_\s\overset{\text{pr}_1\circ Q\times}{\overset{\sim}{\rightarrow}}\Z^{n\times 1}/\Z A_{\s^\prime,\bs^\prime}.
\end{equation}

\noindent
We put 
\begin{equation}
R=
\left(
\begin{array}{c|c}
E_{\bs^\prime}&{\bf O}\\
\hline
-{}^tS_{\s^\prime}&E_{\s^\prime}
\end{array}
\right).
\end{equation}
If we write ${\rm pr}_2:\Z^{\s\times 1}\rightarrow\Z^{\s^\prime\times 1}$ for the projection truncating the entries indexed by $\bs^\prime$, we have a natural isomorphism
\begin{equation}\label{Isom2}
\Z^{\s\times 1}/\Z \transp{A}_\s\overset{\text{pr}_2\circ R\times}{\overset{\sim}{\rightarrow}}\Z^{\s^\prime\times 1}/\Z \transp{A}_{\s^\prime,\bs^\prime}.
\end{equation}

\noindent
We write any element ${\bf k}\in\Z^{(n+k)\times 1}$ as 
$
{\bf k}=
\begin{pmatrix}
{\bf k}_1\\
{\bf k}_2
\end{pmatrix}
$ with ${\bf k}_1\in\Z^{k\times 1}$ and ${\bf k}_2\in\Z^{n\times 1}$. In the same way, we can write $\tilde{\bf k}\in\Z^{\s\times 1}$ as 
$
\tilde{\bf k}=
\begin{pmatrix}
\tilde{\bf k}_{\bs^\prime}\\
\tilde{\bf k}_{\s^\prime}
\end{pmatrix}
$
with ${\bf k}_{\bs^\prime}\in\Z^{{\bs^\prime}\times 1}$ and $\tilde{\bf k}_{\s^\prime}\in\Z^{\s^\prime\times 1}$.
Then, we easily see that the identity
\begin{align}
{}^t\tilde{\bf k}A_\s^{-1}{\bf k}
&=
{}^t(R\tilde{\bf k})(QA_\s {}^tR)^{-1}(Q{\bf k})\\
&=
{}^t\tilde{\bf k}_{\bs^\prime}{\bf  k}_1+{}^t(\tilde{\bf k}_{\s^\prime}-\transp{S}_{\s^\prime}\tilde{\bf k}_{\bs^\prime})A_{\s^\prime,\bs^\prime}^{-1}({\bf k}_2-A_{\bs^\prime}{\bf k}_1)\\
&\equiv {}^t(\tilde{\bf k}_{\s^\prime}-\transp{S}_{\s^\prime}\tilde{\bf k}_{\bs^\prime})A_{\s^\prime,\bs^\prime}^{-1}({\bf k}_2-A_{\bs^\prime}{\bf k}_1)\;\; \text{(mod } \Z\text{ )}\label{eqn:3.88}
\end{align}
holds. For a moment, let us only consider representatives $[\tilde{\bf k}]\in\Z^{\s\times 1}/\Z \transp{A}_\s$ with $\tilde{\bf k}_{\bs^\prime}={\bf 0}$ and $[{\bf k}]\in\Z^{(n+k)\times 1}/\Z A_\s$ with ${\bf k}_1={\bf 0}$. Under the identifications of finite abelian groups (\ref{Isom1}) and (\ref{Isom2}), we construct a basis $\{ [C^{(\tilde{\bf k}_{\s^\prime})}]\}_{\tilde{\bf k}\in\Z^{\s\times 1}/\Z \transp{A}_\s}$ of $\Homo_n(X;\C\Phi)$ as in \S\ref{sec:3.4}. We put 
$\gamma=\transp{(\gamma_1,\dots,\gamma_k)}$ and $\delta=\transp{(\gamma_1,\dots,\gamma_k,c_1,\dots,c_n)}$. We easily see the formula
\begin{equation}\label{eqn:390}
A_\s^{-1}\delta=
\begin{pmatrix}
\gamma+S_{\s^\prime}A_{\s^\prime,\bs^\prime}^{-1}(A_{\bs^\prime}\gamma-c)\\
\hline
A_{\s^\prime,\bs^\prime}^{-1}(c-A_{\bs^\prime}\gamma)
\end{pmatrix}.
\end{equation}

\noindent
Note that $A_\s^{-1}\delta$ is a column vector of which entries are labeled by the elements of $\s$. If $i\in\s^\prime,$ (\ref{eqn:390}) tells us that $p_{\s i}(\delta)$ is equal to the $i$-th entry of the column vector $A_{\s^\prime,\bs^\prime}^{-1}(c-A_{\bs^\prime}\gamma)$. On the other hand, we have the formula
\begin{equation}
p_{\s i^{(l)}_1}(\delta)=\gamma_l-\sum_{i\in\s^{(l)},i\neq i_1^{(l)}}\Big( i\text{-th entry of }A_{\s^\prime,\bs^\prime}^{-1}(c-A_{\bs^\prime}\gamma)\Big).\label{eqn:3.90}
\end{equation}

\noindent
In view of the formula (\ref{MyIntersectionMatrix}), we put
\begin{equation}\label{eq:Eigen}
H_{A_\s,{\bf k}}(\delta)=\prod_{l:|\s^{(l)}|>1}\left(1-e^{2\pi\ii\gamma_l}\right)\prod_{i\in\s^{(l)}}\left( 1-e^{-2\pi\ii p_{\s i}(\delta+{\bf k})}\right)
\end{equation}
to obtain the basic formula 
of the intersection matrix $I_{h,3}:=\left( \langle C^{(\tilde{\bf k}_{1,\s^\prime})},C^{(\tilde{\bf k}_{2,\s^\prime})\vee} \rangle_h\right)_{\tilde{\bf k}_1,\tilde{\bf k}_2\in \Z^{\s\times 1}/\Z\transp A_\s}$ in view of (\ref{eqn:3.88}), (\ref{eqn:3.90}) and Proposition \ref{prop:3.14}. 
\begin{prop}
Under the notation above, we have
\begin{equation}\label{TheIntersectionMatrix}
I_{h,3}=\diag\left( e^{2\pi\ii{}^t\tilde{\bf k}A_\s^{-1}\delta}\right)_{\tilde{\bf k}}U\diag\left( H_{A_\s,{\bf k}}(\delta)\right)_{\bf k}U^*\diag\left( e^{-2\pi\ii{}^t\tilde{\bf k}A_\s^{-1}\delta}\right)_{\tilde{\bf k}}.
\end{equation}
\end{prop}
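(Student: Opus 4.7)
The plan is to reduce Theorem \ref{thm:SigmaIntersectionMatrix1} to a statement about lifts of twisted cycles under finite covering maps, which can then be diagonalized by the characters of $\Z^{(n+k)\times 1}/\Z A_\s$. The starting point is the direct sum decomposition $I_h=\oplus_{\s\in T}I_{\s,h}$ recalled from \cite[Proposition 7.1]{MatsubaraEuler}, so I can fix a single simplex $\s\in T$ and work with the local system $\C\Phi=\C\prod_l(\sum_{i\in\s^{(l)}}x^{{\bf a}^{(l)}(i)})^{-\gamma_l}x^c$ on $X=\{x\in(\C^*)^n\mid\sum_{i\in\s^{(l)}}x^{{\bf a}^{(l)}(i)}\neq 0\}$, treating $A_\s$ as a square matrix.

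First I would reduce to the ``product Lauricella'' shape of Section \ref{sec:3.4}. The left multiplication by the unimodular matrix $Q$ replaces $A_\s$ by $QA_\s$, which has the block form required in (\ref{eqn:3.56}) with inner blocks $A_{\s^\prime,\bs^\prime}$. On the level of the local system this corresponds to factoring out the monomial $x^{-\sum_l\gamma_l{\bf a}^{(l)}(i_1^{(l)})}$ and rewriting each $h_l$ in terms of the coordinates indexed by $\s^\prime$. After this manipulation, the covering map is governed by the square matrix $A_{\s^\prime,\bs^\prime}$ whose columns are grouped into $k$ blocks, so Section \ref{sec:3.4} applies verbatim: in particular, Proposition \ref{prop:3.14} furnishes the intersection matrix $I_{h,2}$ of the cycles $C^{(\tilde{\bf k})}$ (the Pochhammer-style lifts constructed via $\bar C^{(\tilde{\bf k})}$) in the form
\begin{equation*}
I_{h,2}=\diag(e^{2\pi\ii{}^t\tilde{\bf k}M^{-1}\alpha})U\diag(H_{M,{\bf k}}(\alpha,\tilde\alpha))U^*\diag(e^{-2\pi\ii{}^t\tilde{\bf k}M^{-1}\alpha}),
\end{equation*}
with $M=A_{\s^\prime,\bs^\prime}$ and $\alpha,\tilde\alpha$ read off from the reduced data.

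Next I would translate all of this back to the original matrix $A_\s$ and parameter $\delta$. The isomorphisms (\ref{Isom1})--(\ref{Isom2}) identify the finite abelian groups $\Z^{(n+k)\times 1}/\Z A_\s$ and $\Z^{\s\times 1}/\Z\transp A_\s$ with their ``reduced'' counterparts; the congruence (\ref{eqn:3.88}) shows that the pairings appearing in the characters and in the unitary matrix $U_\s$ are preserved modulo $\Z$, so the matrix $U$ of Proposition \ref{prop:3.14} really is $U_\s$. The formulas (\ref{eqn:390}) and (\ref{eqn:3.90}) translate the exponents $p_{\s i}(\delta)$ of the original Cayley matrix into the ones appearing in $H_{M,{\bf k}}$. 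Combining these, one checks that the scalar factors $m(\alpha+{\bf k},\tilde\alpha)m(-\alpha-{\bf k},-\tilde\alpha)h_{M,{\bf k}}$ reassemble exactly into the combinatorial eigenvalue $H_{A_\s,{\bf k}}(\delta)$ of (\ref{eq:Eigen1}); in particular the product $\prod_{l:|\s^{(l)}|>1}(1-e^{2\pi\ii\gamma_l})$ appears because only blocks with at least two columns give a nontrivial $\sum_{i\in\s^{(l)}}$ factor, while singleton blocks cancel between the two $m$-factors.

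The verification therefore produces (\ref{TheIntersectionMatrix}), which is precisely the desired formula (\ref{SigmaIntersectionMatrix1}) for $I_{\s,h}$. The cycles $C^{(\tilde{\bf k}_{\s^\prime})}$ are identified with the $\Gamma_{\s,\tilde{\bf k}}$ of \cite[\S6]{MatsubaraEuler} (and with $\check\Gamma_{\s,\tilde{\bf k}}$ when $\delta$ is replaced by $-\delta$), as already shown in the preliminary reduction of \cite[\S7]{MatsubaraEuler}. The main obstacle I anticipate is bookkeeping: one has to verify carefully that the various diagonal conjugations by $\diag(e^{2\pi\ii{}^t\tilde{\bf k}\cdots})$ factors, the unitary matrices $\bar U$ vs.\ $U$, and the sign/branch factors in $m(\pm\alpha\pm{\bf k},\pm\tilde\alpha)$ all combine so that the only surviving nontrivial data are the characters on $\Z^{(n+k)\times 1}/\Z A_\s$ and the eigenvalues $H_{A_\s,{\bf k}}(\delta)$; this is exactly where the distinction between blocks with $|\s^{(l)}|=1$ and $|\s^{(l)}|>1$ becomes delicate, and it must be reconciled with the square-matrix computation of Section \ref{subsection:not-diagonal} (which handles non-product covers) so that the final answer is uniform in the combinatorial type of $\s$.
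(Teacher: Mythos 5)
Your proposal follows the paper's own route: fix $\s\in T$, row-reduce $A_\s$ by the unimodular matrix $Q$ to place it in the setting of \S\ref{sec:3.4}, invoke Proposition \ref{prop:3.14} for the reduced data $M=A_{\s^\prime,\bs^\prime}$, and translate back via the isomorphisms (\ref{Isom1})--(\ref{Isom2}), the congruence (\ref{eqn:3.88}), and the parameter identities (\ref{eqn:390})--(\ref{eqn:3.90}). The structure of the argument is exactly the paper's.

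There is, however, one genuine error in your account of the singleton blocks. You assert that blocks $l$ with $|\s^{(l)}|=1$ ``cancel between the two $m$-factors,'' but if such a block were carried into the reduced problem its contribution to $m(\alpha+{\bf k},\tilde\alpha)\cdot m(-\alpha-{\bf k},-\tilde\alpha)$ would be $(1-e^{2\pi\ii\gamma_l})(1-e^{-2\pi\ii\gamma_l})$, while the corresponding factor of $h_{M,{\bf k}}$ is $1$; the product is not $1$, so no cancellation occurs, and $H_{A_\s,{\bf k}}(\delta)$ would acquire spurious factors. The actual mechanism is that a singleton block never enters the reduced problem at all: for such an $l$ one has $\s^\prime\cap\s^{(l)}=\emptyset$, so under the row reduction $\sum_{i\in\s^{(l)}}x^{{\bf a}^{(l)}(i)-{\bf a}^{(l)}(i_1^{(l)})}\equiv 1$ and the entire block is absorbed into the monomial $x^{c-\sum_l\gamma_l{\bf a}^{(l)}(i_1^{(l)})}$. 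The setting of \S\ref{sec:3.4} requires $N_l\geq 1$, so only blocks with $|\s^{(l)}|\geq 2$ survive to Proposition \ref{prop:3.14}; this is precisely why the product in (\ref{eq:Eigen}) is taken over $l$ with $|\s^{(l)}|>1$. With this correction your argument coincides with the paper's proof.
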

In the proposition above, we only considered representatives $[\tilde{\bf k}]\in\Z^{\s\times 1}/\Z \transp{A}_\s$ with $\tilde{\bf k}_{\bs^\prime}={\bf 0}$ and $[{\bf k}]\in\Z^{(n+k)\times 1}/\Z A_\s$ with ${\bf k}_1={\bf 0}$. For any element $\tilde{\bf k}\in \Z^{\s\times 1}$, we set $\Gamma^\prime_{\s,\tilde{\bf k}}:=e^{2\pi\ii{}^t\tilde{\bf k}_{\bs^\prime}\gamma}\cdot C^{(\tilde{\bf k}_{\s^\prime}-{}^tS_{\s^\prime}\tilde{\bf k}_{\bs^\prime})}$. This is nothing but the integration cycle constructed in \cite[\S6]{MatsubaraEuler}. We set $I_{\s,h}^\prime:=\left( \langle \Gamma^\prime_{\s,\tilde{\bf k}_1},\check{\Gamma}^\prime_{\s,\tilde{\bf k}_2}\rangle_h\right)_{\tilde{\bf k}_1,\tilde{\bf k}_2\in \Z^{\s\times 1}/\Z\transp A_\s}$. In view of the argument of \cite[\S7]{MatsubaraEuler}, $I_{\s,h}^\prime$ coincides with the matrix $I_{\s,h}$ in \S\ref{sec:2.3}. Let us derive a formula of $I_{\s,h}$. If an element $[{\bf k}]\in\Z^{(n+k)\times 1}/\Z A_\s$ satisfies ${\bf k}_1={\bf 0}$, one can see that $\sum_{i\in\s^{(l)}} p_{\s i}({\bf k})=k_{i_1^{(l)}}=0$ (see e.g., \cite[Lemma 6.3]{MatsubaraEuler}). Therefore, we have $I_{\s,h}=I_{h,3}$. For any element ${\bf k}\in\Z^{(n+k)\times 1}$, we formally define $H_{A_\s,{\bf k}}(\delta)$ by the formula (\ref{eq:Eigen}).
Taking into account the formula 
\begin{equation}
\langle \Gamma_{\s,\tilde{\bf k}_1},\check{\Gamma}_{\s,\tilde{\bf k}_2}\rangle_h
=
e^{2\pi\ii{}^t(\tilde{\bf k}_{1\bs^\prime}-\tilde{\bf k}_{2\bs^\prime})\gamma}\langle C^{(\tilde{\bf k}_{1\s^\prime}-{}^tS_{\s^\prime}\tilde{\bf k}_{1\bs^\prime})},C^{(\tilde{\bf k}_{2\s^\prime}-{}^tS_{\s^\prime}\tilde{\bf k}_{2\bs^\prime})\vee}\rangle_h
\end{equation}
and the formula
\begin{equation}
{}^t\tilde{\bf k}A_\s^{-1}\delta={}^t(\tilde{\bf k}_{\s^\prime}-{}^tS_{\s^\prime}\tilde{\bf k}_{\bs^\prime})A_{\s^\prime,\bs^\prime}^{-1}(c-A_{\bs^\prime}\gamma)+{}^t\tilde{\bf k}_{\bs^\prime}\gamma,
\end{equation}
we obtain the formula of the intersection matrix:
\begin{equation}\label{SigmaIntersectionMatrix-2}
I_{\s,h}=\diag\left( e^{2\pi\ii{}^t\tilde{\bf k}A_\s^{-1}\delta}\right)_{\tilde{\bf k}}U\diag\left( H_{A_\s,{\bf k}}(\delta)\right)_{\bf k}U^*\diag\left( e^{-2\pi\ii{}^t\tilde{\bf k}A_\s^{-1}\delta}\right)_{\tilde{\bf k}}.
\end{equation}
Therefore, we complete the proof of Theorem \ref{thm:SigmaIntersectionMatrix1}. 

\begin{rem}
The $(\tilde{\bf k}_1,\tilde{\bf k}_2)$-entry $(I_{\s,h})_{\tilde{\bf k}_1,\tilde{\bf k}_2}$ of the formula (\ref{SigmaIntersectionMatrix-2}) is given by
\begin{equation}\label{k1k2Component}
(I_{\s,h})_{\tilde{\bf k}_1,\tilde{\bf k}_2}=\frac{1}{r}\sum_{i=1}^re^{2\pi\ii{}^t(\tilde{\bf k}_1-\tilde{\bf k}_2)A_\s^{-1}(\delta+{\bf k}_i)}H_{A_\s,{\bf k}_i}(\delta),
\end{equation}
where $\{ [{\bf k}_1],\dots,[{\bf k}_r]\}$ is a complete set of representatives of $\Z^{(n+k)\times 1}/\Z A_\s$. Each summand is clearly independent of the choice of the representative ${\bf k}_i$. Therefore, we can conclude that the formula (\ref{k1k2Component}) holds for any choice of a complete set of representatives $\{ [{\bf k}_1],\dots,[{\bf k}_r]\}=\Z^{(n+k)\times 1}/\Z A_\s$.
\end{rem}


\begin{thebibliography}{99}
\bibitem{Adolphson}
A. Adolphson, Hypergeometric functions and rings generated by monomials. Duke Math. J. 73 (1994), no. 2, 269--290.

\bibitem{Aomoto1}
 K. Aomoto, \'Equations aux diff\'erences lin\'eaires et les int\'egrales des fonctions multiformes. I. Th\'eor\`eme d'existence. (French) Proc. Japan Acad. 50 (1974), 413--415.


\bibitem{Aomoto2}
K. Aomoto, \'Equations aux diff\'erences lin\'eaires et les int\'egrales des fonctions multiformes. II. \'evanouissement des hypercohomologies et exemples. (French) Proc. Japan Acad. 50 (1974), 542--545.

\bibitem{AomotoLesEquation}
K. Aomoto, Les \'equations aux diff\'erences lin\'eaires et les int\'egrales des fonctions multiformes. (French) J. Fac. Sci. Univ. Tokyo Sect. IA Math. 22 (1975), no. 3, 271--297.

\bibitem{AomotoKita} 
  K. Aomoto and M. Kita, translated by K. Iohara, 
  Theory of Hypergeometric Functions  
  (Springer-Verlag, New York, 2011). 

\bibitem{Beukers}
F. Beukers, Algebraic A-hypergeometric functions. Invent. Math. 180 (2010), no. 3, 589–-610.

\bibitem{ChoMatsumoto}K. Cho and K. Matsumoto,
Intersection theory for twisted cohomologies
and twisted Riemann's period relations I,
Nagoya Mathematical Journal, {\bf 139}, (1995), 67--86.

\bibitem{Deligne}
P. Deligne,
\'{E}quations diff\'{e}rentielles \`a points singuliers r\'{e}guliers,
Lecture Notes in Mathematics, Vol. 163,
Springer-Verlag, Berlin-New York, 1970.

\bibitem{DeligneMostow}
P. Deligne, G. D. Mostow, Monodromy of hypergeometric functions and nonlattice integral monodromy. Inst. Hautes Études Sci. Publ. Math. No. 63 (1986), 5--89.

\bibitem{DeLoeraRambauSantos}
J. A. De Loera, J. Rambau, F. Santos, Triangulations. Structures for algorithms and applications. Algorithms and Computation in Mathematics, 25. Springer-Verlag, Berlin, 2010.


\bibitem{FernandezFernandez}
M.-C. Fern\'andez-Fern\'andez, Irregular hypergeometric D-modules. Adv. Math. 224 (2010), no. 5, 1735--1764.


\bibitem{GelfandGraevZelevinsky}
I. M. Gel'fand, M. I. Graev, A. V. Zelevinskii, Holonomic systems of equations and series of hypergeometric type. (Russian) Dokl. Akad. Nauk SSSR 295 (1987), no. 1, 14--19; translation in Soviet Math. Dokl. 36 (1988), no. 1, 5--10


\bibitem{GKZToral}
I. M. Gel'fand, A. V. Zelevinskii, M. M. Kapranov, Hypergeometric functions and toric varieties. (Russian) Funktsional. Anal. i Prilozhen. 23 (1989), no. 2, 12--26; translation in Funct. Anal. Appl. 23 (1989), no. 2, 94--106

\bibitem{GKZEuler}
I. M. Gel'fand, A. V. Zelevinskii, M. M. Kapranov, Generalized Euler integrals and A-hypergeometric functions. Adv. Math. 84 (1990), no. 2, 255--271.

\bibitem{GKZbook}
I. M. Gel'fand, A. V. Zelevinskii, M. M. Kapranov, Discriminants, resultants and multidimensional determinants. Reprint of the 1994 edition. Modern Birkhäuser Classics. Birkhäuser Boston, Inc., Boston, MA, 2008.

\bibitem{Goto}
Y. Goto, Intersection numbers and twisted period relations for the generalized hypergeometric function ${}_{m+1}F_m$. Kyushu J. Math. 69 (2015), no. 1, 203--217. 

\bibitem{Goto2}
Y. Goto, The monodromy representation of Lauricella's hypergeometric function $F_C$. Ann. Sc. Norm. Super. Pisa Cl. Sci. (5) 16 (2016), no. 4, 1409--1445.

\bibitem{Goto3}
Y. Goto, Twisted cycles and twisted period relations for Lauricella's hypergeometric function $F_C$. Internat. J. Math. 24 (2013), no. 12, 1350094, 19 pp. 

\bibitem{Hotta}
R. Hotta, Equivariant D-modules, arXiv:math/9805021


\bibitem{KitaYoshida1}
M. Kita and M. Yoshida, 
Intersection theory for twisted cycles,   
Math. Nachr.,
\textbf{166} (1994), 287--304. 
%
\bibitem{KitaYoshida2}
M. Kita and M. Yoshida, 
Intersection theory for twisted cycles II ---Degenerate arrangements,
Math. Nachr.,
\textbf{168} (1994), 171--190.

\bibitem{MatsubaraEuler} S.-J. Matsubara-Heo, Euler and Laplace integral representations of GKZ hypergeometric functions, arXiv1904.00565


\bibitem{MatsubaraTakayama} S.-J. Matsubara-Heo, N. Takayama, An algorithm of computing cohomology intersection number of hypergeometric integrals, arXiv1904.01253

\bibitem{MatsumotoIntersection}
K. Matsumoto, Intersection numbers for logarithmic k-forms. Osaka J. Math. 35 (1998), no. 4, 873--893.

\bibitem{MatsumotoYoshida}
K. Matsumoto and M. Yoshida, 
Monodromy of {L}auricella's hypergeometric {$F_A$}-system, Ann. Sc. Norm. Super. Pisa Cl. Sci. (5), 13, (2014), no.2, 551--577

\bibitem{MSY}
K. Matsumoto, T. Sasaki, M. Yoshida, The monodromy of the period map of a {$4$}-parameter family of {$K3$} surfaces and the hypergeometric function of type {$(3,6)$}, Internat. J. Math., \textbf{3}, (1992), No.1, 1--164.

\bibitem{Mim}
K. Mimachi, Intersection numbers for twisted cycles and the connection problem associated with the generalized hypergeometric function ${}_{n+1}F_n$. Int. Math. Res. Not. IMRN 2011, no. 8, 1757--1781.

\bibitem{MimachiYoshida}
K. Mimachi, M. Yoshida, Intersection numbers of twisted cycles associated with the Selberg integral and an application to the conformal field theory. Comm. Math. Phys. 250 (2004), no. 1, 23--45.



\bibitem{risa-asir} 
Risa/Asir, a computer algebra system.
\url{http://www.math.kobe-u.ac.jp/Asir},
\url{http://www.openxm.org}

\bibitem{SST}
M. Saito, B. Sturmfels, N. Takayama, Gr\"obner deformations of hypergeometric differential equations. Algorithms and Computation in Mathematics, 6. Springer-Verlag, Berlin, 2000.
\bibitem{SchulzeWalther}
M. Schulze, U. Walther, Irregularity of hypergeometric systems via slopes along coordinate subspaces. Duke Math. J. 142 (2008), no. 3, 465--509.
\bibitem{Takayama}
N. Takayama, Gr\"obner basis for rings of differential operators and applications. Gr\"obner bases, 279–-344, Springer, Tokyo, 2013.
\bibitem{Verschoor}
C. Verschoor,
On the monodromy invariant Hermitian form for A-hypergeometric systems,
arXiv:2011.00707
\bibitem{YoshidaLove}
M. Yoshida,
Hypergeometric functions, my love.
Modular interpretations of configuration spaces. Aspects of Mathematics, E32. Friedr. Vieweg \& Sohn, Braunschweig, 1997.

\end{thebibliography}
\end{document}